\newtheorem{theorem}{Theorem}[section]
\newtheorem{lemma}[theorem]{Lemma}
\newtheorem{proposition}[theorem]{Proposition}
\newtheorem{corollary}[theorem]{Corollary}
\newtheorem{definition}[theorem]{Definition}
\newtheorem{remark}[theorem]{Remark}
\newtheorem{question}[theorem]{Question}
\newcommand\nph{\varphi}
\newcommand\rank{\mathop{\rm rank}}
\newcommand\conv{\mathop{\rm conv}}
\newcommand\her{\mathop{\rm her}}
\newcommand\ap{\mathop{\rm ap}}
\newcommand\cp{\mathop{\rm cp}}
\newcommand\thet{\mathop{\rm th}}
\newcommand\fp{\mathop{\rm fp}}
\newcommand\sub{\mathop{\rm sub}}
\newcommand{\cl}[1]{\mathcal{#1}}
\newcommand{\bb}[1]{\mathbb{#1}}
\newcommand\vp{\mathop{\rm vp}}
\newcommand\fvp{\mathop{\rm fvp}}
\newcommand\thab{\mathop{\rm thab}}
\newcommand\Tr{\mathop{\rm Tr}}
\newcommand{\ip}[2]{\ensuremath{\left\langle #1 , #2\right\rangle}}
\begin{document}

\title[Sandwich theorems and capacity bounds]{Sandwich theorems and capacity bounds for non-commutative graphs}

%\author[V. I. P.]{V. I. P.}
%\address{}
%  Queen's University Belfast, Belfast BT7 1NN, United Kingdom}
%\email{amckee240@qub.ac.uk}

%\author[I. G. T.]{I. G. T.}
%\address{Pure Mathematics Research Centre,
%  Queen's University Belfast, Belfast BT7 1NN, United Kingdom}
%\email{i.todorov@qub.ac.uk}

\author[]{G. Boreland}
\address{Mathematical Sciences Research Centre,
Queen's University Belfast, Belfast BT7 1NN, United Kingdom}
\email{gboreland01@qub.ac.uk}

\author[]{I. G. Todorov}
\address{Mathematical Sciences Research Centre,
Queen's University Belfast, Belfast BT7 1NN, United Kingdom,
and
School of Mathematical Sciences, Nankai University, 300071 Tianjin, China}
\email{i.todorov@qub.ac.uk}

\author[]{A. Winter}
\address{ICREA and F\'{\i}sica Te\`{o}rica: Informaci\'{o} i Fenomens Qu\`{a}ntics, Universitat Aut\`{o}noma de Barcelona, ES-08193 Bellaterra, Barcelona, Spain}
\email{andreas.winter@uab.cat}

%\author{G. Boreland, I. G. Todorov and A. Winter}

\date{19 July 2019}

\maketitle

\tableofcontents

\begin{abstract}
We define non-commutative versions of the vertex packing polytope, the theta convex body and the
fractional vertex packing polytope of a graph, and 
establish a quantum version of the Sandwich Theorem of Gr\"{o}tschel, Lov\'{a}sz and Schrijver. 
We define new non-commutative versions of the Lov\'{a}sz number of a graph which 
lead to an upper bound of the zero-error 
capacity of the corresponding quantum channel that can be genuinely better than the one established 
by Duan, Severini and Winter in \cite{dsw}. 
We define 
non-commutative counterparts of widely used classical graph parameters and establish their interrelation. 
\end{abstract}

%%%%%%%%%%%%%%%%%%%%%%%%%%%%%%%%%%%%%%%%%%%%%%%%%%%%%%%%
%%%%%%%%%%%%%     INTRODUCTION        %%%%%%%%%%%%%%%%%%%%%%%%%%%%%%
%%%%%%%%%%%%%%%%%%%%%%%%%%%%%%%%%%%%%%%%%%%%%%%%%%%%%%%%

\section{Introduction}\label{s_intro}

The use of graphs in the study of information theoretic questions has origins in Shannon's seminal paper \cite{shannon},
where he laid the foundations of zero-error information theory. 
With a given information channel $\cl N$, Shannon associated a graph $G_{\cl N}$, called the 
\emph{confusability graph} of the channel, and showed that the zero-error transmission properties of $\cl N$
are captured in their entirety by $G_{\cl N}$. In particular, he defined the 
\emph{zero-error capacity} $c_0(\cl N)$ of the channel $\cl N$ 
as an asymptotic parameter involving the independence numbers of 
the strong powers of $G_{\cl N}$. 
While the information theoretic importance of $c_0(\cl N)$ is easy to appreciate, its computation remains 
a difficult problem, due to the high computational complexity of the independence number. 

An upper bound for $c_0(\cl N)$, computable in polynomial time, was introduced by Lov\'{a}sz in \cite{lo}. 
The parameter $\theta(G)$ of a given graph $G$, defined therein, satisfies the Sandwich Theorem
\begin{equation}\label{eq_clsa}
\alpha(G)\leq \theta(G) \leq \chi_{\rm f}(G^c);
\end{equation}
here $\alpha(G)$ is the independence number of $G$, 
while $\chi_{\rm f}(G^c)$ is the fractional chromatic number of its complement $G^c$. 
The Sandwich Theorem thus provides a simultaneous bound for the outer parameters, which have high computational complexity,
and plays an important role in combinatorial optimisation \cite{gls_book}.

A stronger and more powerful version of the Sandwich Theorem was established in \cite{gls} (see also \cite{knuth}), 
where convex bodies arising from vertex packings of a graph $G$ were introduced 
-- these are the vertex packing polytope $\vp(G)$, the fractional vertex packing polytope $\fvp(G)$ and the 
theta body $\thab(G)$ -- and shown to satisfy the inclusions
\begin{equation}\label{eq_clsacc}
\vp(G) \subseteq \thab(G) \subseteq \fvp(G).
\end{equation}
Inequalities (\ref{eq_clsa}) are then obtained by optimising the trace functional over the chain (\ref{eq_clsacc}).
Its terms are particular examples of \emph{convex corners}, that is, hereditary closed convex subsets of $\bb{R}^d_+$
\cite{cklms, gls_book}.
The importance of the inclusions (\ref{eq_clsacc}) comes from the significance of considering
weighted versions of the trace functional in optimisation problems for graphs \cite{knuth}.

Quantum information analogues of the aforementioned objects and results were initiated in \cite{dsw}, where 
the authors defined a suitable version of the confusability graph of a quantum channel $\Phi$ as 
an operator subsystem (that is, a selfadjoint subspace containing the identity matrix) $\cl S$ 
of the domain $M_d$ of $\Phi$, and showed that it captures the zero-error properties of $\Phi$. 
In particular, they defined the (classical) zero-error capacity of the channel $\Phi$ and showed that 
it depends solely on the operator system $\cl S$. 
A classical graph $G$ gives rise in a canonical fashion to an operator system that remembers $G$ 
\cite{op}. This justifies calling arbitrary operator systems in $M_d$ \emph{non-commutative graphs}, 
and pursuing their study as a non-commutative version of graph theory. 
Advances in this direction were recently made in \cite{lpt}, where classical parameters such as the 
intersection number, the minimum semi-definite rank and the orthogonal rank of the 
complement were lifted to the non-commutative setting and given 
a quantum informational interpretation, 
and in \cite{weaver}, where a version of the Ramsey Theorem was established for operator systems. 
A quantum version of the Lov\'{a}sz number was defined in \cite{dsw}, and shown to be an upper bound of the 
zero-error capacity of quantum channels, computable via semi-definite programming.

The purpose of this paper is two-fold. Firstly, we initiate the study of non-commutative convex corners, establish 
a quantum version of the Sandwich Theorem (\ref{eq_clsacc}) and define a 
new non-commutative version of the classical Lov\'{a}sz number that is an upper bound of the zero-error 
capacity of the corresponding quantum channel and which can be genuinely better than the one established in \cite{dsw}. 
Secondly, we continue the development of non-commutative graph theory by defining 
non-commutative counterparts of widely used classical graph parameters and establishing their interrelation.

In more detail, the paper is organised as follows. 
After some initial definitions and preliminary observations in Section \ref{s_dfo}, we
introduce in Section \ref{s_tst} \emph{non-commutative convex corners}, focusing on three convex corners 
associated with a non-commutative graph $\cl S \subseteq M_d$: 
the \emph{abelian projection corner} $\ap(\cl S)$, which we show to be a quantisation of the vertex packing polytope, 
and the anti-blockers $\cp(\cl S)^{\sharp}$ and $\fp(\cl S)^{\sharp}$ of the \emph{clique} and \emph{full projection corners}
$\cp(\cl S)$ and $\fp(\cl S)$, 
which turn out to be 
distinct quantisations of the fractional vertex packing polytope. We establish a first chain of inclusions between these 
convex corners, introduce several new non-commutative graph parameters that generalise the clique and the 
fractional clique numbers of a graph and of its complement, and evaluate these parameters in some special 
cases. 

In Section \ref{s_lc}, we introduce a non-commutative version $\thet(\cl S)$ of the theta-body of a graph
and establish the chain of inclusions
\begin{equation}\label{eq_apthe}
\ap(\cl S)\subseteq \thet(\cl S) \subseteq \fp(\cl S)^{\sharp}
\end{equation}
as a quantum version of (\ref{eq_clsacc}). 
Optimising the trace functional over (\ref{eq_apthe}) leads to a quantisation $\theta(\cl S)$ of the classical
Lov\'{a}sz number, different from the one 
introduced in \cite{dsw}, and to a numerical version of the inequalities (\ref{eq_clsa}).
We do not know whether $\theta$ is submultiplicative for the tensor product, and hence whether 
it is an upper bound of the zero-error capacity. 
This motivates the development in Section \ref{s_aqthe}, where we introduce 
yet another non-commutative version $\hat{\theta}(\cl S)$ of the Lov\'{a}sz number.
We show that $\hat{\theta}(\cl S)$ is an upper bound of the zero-error capacity of $\cl S$, 
which can be genuinely better than the non-commitative Lov\'{a}sz number of \cite{dsw}. 
In fact, we show that $\hat{\theta}(\cl S)$ is a genuine improvement of the 
complexity bound $\beta(\cl S)$ found in \cite{lpt}. 
Our results imply that the multiple characterisations \cite{lo} of the Lov\'{a}sz number of a graph lead to
(at least two) distinct parameters in the non-commutative case. 

In Section \ref{s_cp} we establish some further properties of the 
parameters introduced in the previous sections, the most important of which is 
the continuity of the maps $\cl S \to \thet(\cl S)$ and $\cl S \to \theta(\cl S)$.
While we do not know whether $\theta = \hat{\theta}$, we show that these two parameters 
take the minimal value $1$ only in the case of the complete non-commutative graph.
We prove the stability of the parameters $\theta$ and $\hat{\theta}$ under amplification, which 
constitutes another important difference between them and the parameter introduced in \cite{dsw}. 
We finish the paper with a short section containing some open problems.

%%%%%%%%%%%%%%%%%%%%%%%%%%%%%%%%%%%%%%%%%%%%%%%%%%%%%%%%
%%%%%%%%%%%%%            DEFINITIONS             %%%%%%%%%%%%%%%%%%%%%%%%%%%%
%%%%%%%%%%%%%%%%%%%%%%%%%%%%%%%%%%%%%%%%%%%%%%%%%%%%%%%%

\section{Definitions and basic properties}\label{s_dfo}

In this section, we set notation, recall some background from \cite{dsw}
and introduce various concepts that will be used in the sequel. 
Given a subset $S$ of a vector (resp. topological) space $V$, we denote by ${\rm conv}(S)$ 
(resp. $\overline{S}$) the convex hull (resp. the closure) of $S$.
We denote by $\bb{R}^d_+$ the set of all vectors in $\bb{R}^d$ with non-negative entries. 
Let $H$ be a Hilbert space of finite dimension $d$, which will be fixed throughout the paper 
unless stated otherwise. We denote by 
$\cl L(H)$ the algebra of all linear transformations on $H$, equipped with the operator norm $\|\cdot\|$. 
We denote by $I$ (or $I_d$) the identity operator on $H$. 
Given an orthonormal basis of $H$, we make the canonical identification $\cl L(H) \equiv M_d$.
We will often write $M_d$ in the place of $\cl L(H)$ even if we have not fixed a specific basis.
We denote by $\Tr$ the trace functional on $\cl L(H)$; if $A = (a_{i,j})_{i,j=1}^d\in \cl L(H)$ then
$\Tr(A) = \sum_{i=1}^d a_{i,i}$. 
We let $A^{\rm t}$ be the transpose of the matrix $A\in M_d$. 
We use $\langle \cdot,\cdot\rangle$ to denote both vector space duality and inner products, which we assume to be 
linear on the first variable. 
Note that the dual space of $M_d$ can be canonically identified with $M_d$ via the pairing 
$\langle A,B\rangle = \Tr(AB)$. 
We equip $M_d$ with the Hilbert-Schmidt inner product $(A,B)\to \Tr(B^*A)$, $A,B\in M_d$. 
As usual, given a subspace $\cl F$ of a Hilbert space, $\cl F^{\perp}$ denotes its orthogonal complement.
If $\xi, \eta\in H$, we write $\xi\eta^*$ for the rank one operator on $H$ given by $(\xi\eta^*)(\zeta) = \langle \zeta,\eta\rangle\xi$.

A subspace $\cl S\subseteq \cl L(H)$ is called an \emph{operator system} if $I\in \cl S$ and 
$A^*\in \cl S$ whenever $A\in \cl S$. In this case, we say that $\cl S$ is a \emph{non-commutative graph on} $H$.
We denote by $\cl S^+$ the cone of all positive operators in $\cl S$. 
It is clear that if $\cl S\subseteq \cl L(H)$ is an operator system and $m\in \bb{N}$ 
then the space $M_m(\cl S)$ of all $m$ by $m$ matrices with entries in $\cl S$ is an operator system 
in $\cl L(H^m)$, where $H^m$ is the direct sum of $m$ copies of $H$. 

Let $G = (X,E)$ be an undirected graph without loops, with vertex set $X$ of cardinality $d$ and edge set $E$. 
We denote by $G^c$ the graph complement of $G$, that is, $G^c = (X,\tilde{E})$ where,
for $x\neq y$, we have that $\{x,y\}\in \tilde{E}$ if and only if $\{x,y\}\not\in E$.
We  write $x\sim y$ if $\{x,y\}\in E$ and $x\simeq y$ if $x\sim y$ or $x = y$. 
Identifying $X$ with $[d] := \{1,\dots,d\}$, we let $(e_x)_{x\in X}$ be the canonical orthonormal basis of 
$H \cong \bb{C}^d$, and set 
$$\cl S_G = {\rm span}\{e_xe_{y}^* :  x,y\in X, x\simeq y\}.$$
Let $\cl D_X$ be the diagonal matrix algebra 
corresponding to the basis $(e_x)_{x\in X}$ and 
$\Delta : M_d\to \cl D_X$ be the conditional expectation.
(We sometimes write $\cl D_d$ in the place of $\cl D_X$.) 
For a subset $F\subseteq X$, we let $\chi_F$ be the characteristic function of $F$ 
and set $P_F = \sum_{x\in F} e_x e_x^*$. 
We have that $\cl S_G$ is an operator system and a $\cl D_X$-bimodule in the sense that 
$BTA\in \cl S_G$ whenever $T\in \cl S_G$ and $A,B\in \cl D_X$. 
Operator systems of the form $\cl S_G$ for some graph $G$ will be called 
\emph{graph operator systems}; it is straightforward to see that these are precisely the 
operator systems acting on $H$ that are $\cl D_X$-bimodules. 

Let $\cl S$ and $\cl T$ be operator systems. A linear map $\nph : \cl S\to \cl T$ 
is called unital if $\nph(I) = I$, and completely positive if 
$\nph^{(m)}(M_m(\cl S)^+)\subseteq M_m(\cl T)^+$ for every $m\in \bb{N}$, 
where $\nph^{(m)} : M_m(\cl S)\to M_m(\cl T)$ is the map given by $\nph^{(m)}((a_{i,j})) = (\nph(a_{i,j}))$. 
The map $\nph$ is called a complete order isomorphism if $\nph$ is completely positive, 
bijective and $\nph^{-1}$ is completely positive. 
It was shown in \cite{op} that, if $G_1$ and $G_2$ are graphs then 
$\cl S_{G_1}$ is unitally 
completely order isomorphic to $\cl S_{G_2}$ precisely when $G_1$ is graph isomorphic to $G_2$. 

The concept in (i) of the following definition was introduced in \cite{dsw}.

\begin{definition}\label{d_inde}
Let $H$ be a finite dimensional Hilbert space and $\cl S\subseteq \cl L(H)$ be an operator system. 
A set $\{\xi_i\}_{i=1}^m\subseteq H$ of mutually orthogonal unit vectors is called 

(i) \ \emph{$\cl S$-independent} 
if $\{\xi_i\xi_j^* : i\neq j\}\subseteq \cl S^{\perp}$;

(ii)
\emph{$\cl S$-clique} 
if $\{\xi_i\xi_j^* : i\neq j\}\subseteq \cl S.$
\end{definition}

%%%%%%%%%%%%%%%%%%%%%%%%%%%%%%%%%%%%%%%%%%%%%%%%%
%%%%%%%%%.   PROJECTIONS.     %%%%%%%%%%%%%%%%%%%%%%%%%%%%%
%%%%%%%%%%%%%%%%%%%%%%%%%%%%%%%%%%%%%%%%%%%%%%%%%

\begin{definition}\label{d_ab}
Let $H$ be a finite dimensional Hilbert space and 
$\cl S\subseteq \cl L(H)$ be an operator system. A projection $P\in \cl L(H)$ will be called

(i) \ \ \emph{$\cl S$-abelian} 
if $P\cl S P$ is contained in an abelian C*-subalgebra of $\cl L(H)$;

(ii) \ \emph{$\cl S$-full}  if $\cl L(PH)\oplus 0_{P^{\perp}} \subseteq \cl S$.

(iii) \emph{$\cl S$-clique} if its range is the span of an $\cl S$-clique. 
\end{definition}

We denote the set of all $\cl S$-abelian (resp. $\cl S$-full, $\cl S$-clique) projections by 
$\cl P_{\rm a}(\cl S)$ (resp. $\cl P_{\rm f}(\cl S)$, $\cl P_{\rm c}(\cl S)$). 

\medskip

\noindent{\bf Remarks. (i) } 
The condition $\cl L(PH)\oplus 0_{P^{\perp}} \subseteq \cl S$ will often be written 
simply $\cl L(PH)\subseteq \cl S$.
If a projection $P$ is $\cl S$-full then $P\in \cl S$.

\smallskip

{\bf (ii) } 
Every $\cl S$-full projection is $\cl S$-clique. The converse does not hold true
even in the case where $\cl S$ is a graph operator system. 
For example, let $G$ be the full bipartite graph between sets $X$ and $Y$
(so that $V(G) = X\cup Y$, with $X$ and $Y$ disjoint), where $|X| > 1$.
Let $v = \frac{1}{\sqrt{|X|}}\chi_{X}$ and $w = \frac{1}{\sqrt{|Y|}}\chi_{Y}$, viewed as (unit) vectors in $\bb{C}^{|V(G)|}$. 
Then $\{v,w\}$ is an $\cl S_G$-clique, but the projection onto ${\rm span}\{v,w\}$ is not $\cl S$-full since 
no two vertices in $X$ are adjacent. 

\medskip

%%%%%%%%%%%%%%%%%%%%%%%%%%%%%%%%%%%%%%%%%%%%%%%%%
%%%%%%%%%%%%%%.   PROPOSITIONS, COMPLEMENT.     %%%%%%%%%%%%%%%
%%%%%%%%%%%%%%%%%%%%%%%%%%%%%%%%%%%%%%%%%%%%%%%%%

Part (i) of the next proposition was communicated to us by Vern I. Paulsen.

\begin{proposition}\label{p_int}
Let $H$ be a finite dimensional Hilbert space and $\cl S\subseteq \cl L(H)$ be an operator system. 

(i) \ 
A projection $P\in \cl L(H)$ is $\cl S$-abelian if and only if 
there exists an orthonormal basis of $PH$ that is an $\cl S$-independent set.

(ii) A projection $P\in \cl S$ is $\cl S$-full if and only if every orthonormal basis of $PH$ is an $\cl S$-clique.
\end{proposition}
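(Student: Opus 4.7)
My plan is to reduce both parts to the existence of orthonormal bases of $PH$ in which elements of $\cl S$ assume a prescribed matrix pattern, exploiting the self-adjointness of $\cl S$ together with the spectral theorem for commuting normal operators in finite dimensions.

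For (i), the first step is to unfold $\cl S$-independence at the level of matrix entries. Using the formula $\Tr(A\xi\eta^{*})=\langle A\xi,\eta\rangle$, the Hilbert--Schmidt pairing satisfies $\langle A,\xi_i\xi_j^{*}\rangle=\langle A\xi_j,\xi_i\rangle$, so a system of mutually orthogonal unit vectors $\{\xi_i\}_{i=1}^m$ spanning $PH$ is $\cl S$-independent precisely when each $PAP$, viewed as an operator on $PH$, is diagonal in the basis $\{\xi_i\}$. The ``if'' direction follows at once, since simultaneously diagonal operators sit inside the abelian $C^{*}$-algebra of matrices diagonal in that basis (together with any extension across $P^{\perp}H$). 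For the ``only if'' direction, suppose $P\cl S P$ lies in some abelian $C^{*}$-subalgebra of $\cl L(H)$. Since $\cl S$ is self-adjoint, so is $P\cl S P$, and thus the family $\{PAP|_{PH}:A\in\cl S\}$ consists of pairwise commuting normal operators on $PH$. Simultaneous diagonalisation of commuting normals yields an orthonormal basis of $PH$ diagonalising them all, and this basis is $\cl S$-independent by the matrix-entry reformulation.

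For (ii), the ``only if'' direction is immediate: if $\cl L(PH)\subseteq\cl S$ and $\{\xi_i\}_{i=1}^m$ is any orthonormal basis of $PH$, then $\xi_i\xi_j^{*}\in\cl L(PH)\subseteq\cl S$ for all $i\ne j$. For the converse, fix an orthonormal basis $\{\xi_i\}_{i=1}^m$ of $PH$; since the off-diagonal rank one operators $\xi_i\xi_j^{*}$ ($i\ne j$) lie in $\cl S$ by hypothesis and, together with the diagonal projections, they span $\cl L(PH)$, it suffices to show each $\xi_i\xi_i^{*}$ lies in $\cl S$. The key trick is to apply the hypothesis to a rotated basis obtained by replacing $\xi_i,\xi_j$ with $\xi_i':=\tfrac{1}{\sqrt{2}}(\xi_i+\xi_j)$ and $\xi_j':=\tfrac{1}{\sqrt{2}}(\xi_i-\xi_j)$, leaving the other basis vectors unchanged. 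Expanding $\xi_i'(\xi_j')^{*}\in\cl S$ and cancelling the contributions of $\xi_i\xi_j^{*}$ and $\xi_j\xi_i^{*}$, which are also in $\cl S$, yields $\xi_i\xi_i^{*}-\xi_j\xi_j^{*}\in\cl S$. Summing this over $j\ne i$ and invoking the hypothesis $P=\sum_k\xi_k\xi_k^{*}\in\cl S$ produces $m\,\xi_i\xi_i^{*}-P\in\cl S$, hence $\xi_i\xi_i^{*}\in\cl S$, as required.

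I anticipate no substantive obstacle beyond identifying the rotated-basis trick in (ii): that is the sole place where the hypothesis $P\in\cl S$ is genuinely used, and it is the mechanism that promotes the off-diagonal rank one information to information about the diagonal projections themselves.
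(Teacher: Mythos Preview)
Your proof is correct and follows essentially the same route as the paper. In (i), the paper phrases the ``only if'' direction by extending $P\cl S P$ to a maximal abelian C*-subalgebra of $\cl L(PH)$ and reading off a diagonalising basis, whereas you invoke simultaneous diagonalisation of commuting normal operators directly; these are equivalent. In (ii), your rotated-basis argument producing $\xi_i\xi_i^{*}-\xi_j\xi_j^{*}\in\cl S$ and then summing against $P\in\cl S$ is exactly the paper's trick (the paper writes the same identity as $k\,\xi_1\xi_1^{*}=P+\sum_{i\geq 2}(\xi_1\xi_1^{*}-\xi_i\xi_i^{*})$).
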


\begin{proof}
(i) 
Suppose that the orthonormal set $\{\xi_i\}_{i=1}^m \subseteq H$ 
is $\cl S$-independent and let $P$ be the projection onto its span. 
If $T\in \cl S$ and $i\neq j$ then $T\perp \xi_i\xi_j^*$ and thus
\begin{equation}\label{eq_PTP}
PTP 
= 
\sum_{i,j=1}^m (\xi_i\xi_i^*) T (\xi_j\xi_j^*)
= \sum_{i,j=1}^m \langle T\xi_j, \xi_i\rangle (\xi_i\xi_j^*)
= 
\sum_{i=1}^m \langle T\xi_i, \xi_i\rangle (\xi_i\xi_i^*); 
\end{equation}
hence, $PTP$ is contained in the abelian algebra ${\rm span}\{\xi_i\xi_i^* : i = 1,\dots,m\}$. 

Conversely, assume that $P$ is $\cl S$-abelian, and let $\cl D\subseteq \cl L(PH)$ be a maximal
abelian C*-subalgebra such that $P\cl S P\subseteq \cl D$. Let $(\xi_i)_{i=1}^m$  be a 
family of mutually orthogonal unit vectors whose span is $P$ 
such that ${\rm span}\{\xi_i\xi_i^* : i = 1,\dots,m\} \subseteq \cl D$. 
Since $(\xi_i\xi_j^*)_{i,j}$ is a linearly independent family, (\ref{eq_PTP}) shows that 
$\langle T\xi_j, \xi_i \rangle = 0$ whenever $i\neq j$; thus, 
the set $\{\xi_i\}_{i = 1}^m$ is $\cl S$-independent. 

(ii) 
Suppose that every orthonormal basis of $PH$ is an $\cl S$-clique.
Fix an $\cl S$-clique $\{\xi_i\}_{i = 1}^k$ that spans $PH$,
and $i,j$ with $1\leq i\neq j\leq k$.
Then $\xi_i\xi_j^*\in \cl S$. 
Since $\xi_i + \xi_j \perp \xi_i - \xi_j$, we have 
$$\xi_i \xi_i^* + \xi_j\xi_i^* - \xi_i\xi_j^* - \xi_j\xi_j^* = (\xi_i + \xi_j)(\xi_i - \xi_j)^* \in \cl S.$$
Thus, $\xi_i \xi_i^* - \xi_j\xi_j^* \in \cl S$.
It follows that 
$$k (\xi_1\xi_1^*) = P + \sum_{i = 2}^k (\xi_1\xi_1^* - \xi_i\xi_i^*) \in \cl S$$
and so $\xi_1\xi_1^*\in \cl S$. 
By symmetry, $\xi_i\xi_i^*\in \cl S$ for all $i \in [k]$. 
Since the family $\{\xi_i\}_{i=1}^k$ is an $\cl S$-clique,
we conclude that $\cl L(PH) = {\rm span}\{\xi_i \xi_j^* : i,j \in [k]\}\subseteq \cl S$,
and so $P$ is $\cl S$-full. 

Conversely, suppose that $P$ is $\cl S$-full, that is, $\cl L(PH) \subseteq \cl S$. 
If $\{\xi_i\}_{i=1}^k$ is an orthonormal basis of $PH$ then 
clearly $\xi_i\xi_j^*\in \cl L(PH)$ and hence $\xi_i\xi_j^*\in \cl S$, for all $i\neq j$. 
Thus, $\{\xi_i\}_{i = 1}^k$  is an $\cl S$-clique. 
\end{proof}

\medskip

We next consider a natural candidate for a graph complement in the non-commutative case.

\begin{definition}\label{d_compl}
Let $\cl S$ be a non-commutative graph. The \emph{complement} $\cl S^c$ of $\cl S$ is 
the operator system $\cl S^c = \cl S^{\perp} + \bb{C}I$. 
\end{definition}

\begin{remark}\label{r_cc}
If $\cl S$ is a non-commutative graph then $\cl S = \cl S^{cc}$. 
\end{remark}

\begin{proof}
%Since $\cl S^{\perp}\subseteq \cl S^{\perp} + \bb{C}I$, we have that 
%$(\cl S^{\perp} + \bb{C}I)^{\perp} \subseteq \cl S$. Since $\cl S$ is an operator system,
%$\bb{C}I\subseteq \cl S$ and thus
Clearly, 
\begin{equation}\label{eq_cont}
\cl S^{cc} = (\cl S^{\perp} + \bb{C}I)^{\perp} + \bb{C}I\subseteq \cl S.
\end{equation}
The equality follows from the fact that the left and the right hand side in 
(\ref{eq_cont}) have the same dimension.
\end{proof}

\begin{proposition}\label{p_coic}
Let $H$ be a finite dimensional Hilbert space and $\cl S\subseteq \cl L(H)$ be an operator system. 

(i) \ A subset $\{\xi_i\}_{i=1}^k \subseteq H$ is $\cl S$-independent if and only if it is an $\cl S^c$-clique. 

(ii) A subset $\{\xi_i\}_{i=1}^k \subseteq H$ is an $\cl S$-clique if and only if it is $\cl S^c$-independent. 

\noindent Thus, a projection $P$ is $\cl S$-abelian if and only if $P$ is $\cl S^c$-clique.
\end{proposition}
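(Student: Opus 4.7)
The plan is to reduce everything to the trace duality between $\cl S$ and $\cl S^\perp$, exploiting that the vectors in question are mutually orthogonal and unit (so $\Tr(\xi_i\xi_j^*) = \langle \xi_i,\xi_j\rangle = 0$ whenever $i\neq j$), and that $I \in \cl S$, which forces elements of $\cl S^\perp$ to have vanishing trace.

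For (i), the forward direction is immediate: if $\{\xi_i\}$ is $\cl S$-independent, then $\xi_i\xi_j^*\in \cl S^\perp\subseteq \cl S^\perp + \bb{C}I = \cl S^c$ for $i\neq j$, so $\{\xi_i\}$ is an $\cl S^c$-clique. For the converse, suppose $\{\xi_i\}$ is an $\cl S^c$-clique. Fix $i\neq j$ and write $\xi_i\xi_j^* = T + \lambda I$ with $T\in \cl S^\perp$ and $\lambda \in \bb{C}$. Since $I\in \cl S$, we have $\Tr(T) = \langle T, I\rangle = 0$; on the other hand $\Tr(\xi_i\xi_j^*) = \langle \xi_i,\xi_j\rangle = 0$. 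Taking traces on both sides of the decomposition yields $\lambda d = 0$, whence $\lambda = 0$ and $\xi_i\xi_j^* = T \in \cl S^\perp$, establishing $\cl S$-independence.

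For (ii), I would give a parallel direct argument using that $(\cl S^c)^\perp = (\cl S^\perp + \bb{C}I)^\perp = \cl S\cap (\bb{C}I)^\perp$. If $\{\xi_i\}$ is an $\cl S$-clique, then for $i\neq j$ the operator $\xi_i\xi_j^*$ lies in $\cl S$ and has trace $0$, so it lies in $(\cl S^c)^\perp$, giving $\cl S^c$-independence. Conversely, if $\{\xi_i\}$ is $\cl S^c$-independent, then $\xi_i\xi_j^*\in (\cl S^c)^\perp\subseteq \cl S$, so $\{\xi_i\}$ is an $\cl S$-clique. Alternatively, (ii) follows from (i) applied to $\cl S^c$ together with Remark \ref{r_cc}.

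Finally, for the projection statement, I would chain three equivalences. By Proposition \ref{p_int}(i), $P$ is $\cl S$-abelian if and only if some orthonormal basis of $PH$ is $\cl S$-independent. By part (i) of the present proposition, such a basis is $\cl S$-independent if and only if it is an $\cl S^c$-clique. By the definition of $\cl S^c$-clique projection, the existence of such a basis is exactly the condition that $P$ is $\cl S^c$-clique. The main (and only) subtle point throughout is the trace argument that extracts $\lambda = 0$ in the converse direction of (i); everything else is bookkeeping with the definitions.
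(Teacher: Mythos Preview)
Your proof is correct. The only difference from the paper's argument is organizational: the paper first proves the forward implications of both (i) and (ii) directly (exactly as you do), and then obtains the two converses by applying those forward implications to $\cl S^c$ and invoking Remark \ref{r_cc} ($\cl S^{cc}=\cl S$). You instead prove the converses directly via the trace argument $\Tr(\xi_i\xi_j^*)=0$ and the identification $(\cl S^c)^\perp = \cl S\cap\{I\}^\perp$. These are the same idea in different packaging---the trace-zero observation is precisely what makes the $\bb{C}I$ summand in $\cl S^c$ harmless, and it is also what drives the dimension count in Remark \ref{r_cc}. Your route is marginally more self-contained; the paper's route makes the role of the double-complement identity explicit. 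The final projection statement is handled identically in both.
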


\begin{proof}
Suppose that the set $\{\xi_i\}_{i=1}^k \subseteq H$ is $\cl S$-independent; thus,
$\{\xi_i\xi_j^* : i\neq j\}\subseteq \cl S^{\perp}$. Since 
$\cl S^{\perp}\subseteq \cl S^c$, we conclude that 
$\{\xi_i\}_{i=1}^k$ is an $\cl S^c$-clique.

Suppose that $\{\xi_i\}_{i=1}^k$ is an $\cl S$-clique. 
Then $\xi_i\xi_j^* \perp \cl S^{\perp}$ whenever $i\neq j$. 
Trivially, $\xi_i\xi_j^*\perp \bb{C}I$ whenever $i\neq j$; thus,
$\xi_i\xi_j^*\perp (\cl S^{\perp} + \bb{C}I)$ whenever $i\neq j$, and so 
the set $\{\xi_i\}_{i=1}^k$ is $\cl S^c$-independent. 

(i) Suppose that $\{\xi_i\}_{i=1}^k$ is an $\cl S^c$-clique. By the previous paragraph, 
$\{\xi_i\}_{i=1}^k$ is $\cl S^{cc}$-independent. By Remark \ref{r_cc}, 
$\{\xi_i\}_{i=1}^k$ is $\cl S$-independent.

(ii) Suppose that $\{\xi_i\}_{i=1}^k$ is an $\cl S^c$-independent set. 
By the first paragraph, $\{\xi_i\}_{i=1}^k$ is an $\cl S^{cc}$-clique.
By Remark \ref{r_cc}, $\{\xi_i\}_{i=1}^k$ is an $\cl S$-clique.

The remaining claims follow from Proposition \ref{p_int}.
\end{proof}

\begin{remark}\label{r_closeds}
{\rm 
Let $H$ be a finite dimensional Hilbert space and $\cl S\subseteq \cl L(H)$ be an operator system.
The sets $\cl P_{\rm a}(\cl S)$, $\cl P_{\rm f}(\cl S)$ and $\cl P_{\rm c}(\cl S)$ are closed.
}
\end{remark}

\begin{proof}
Suppose that $(P_n)_{n\in \bb{N}}$ is a sequence of $\cl S$-full projections with $\lim_{n\to\infty}$ $P_n = P$. 
For every $A\in \cl L(H)$ with $A = PAP$ we have $A = \lim_{n\to\infty} P_nAP_n$; 
since $P_n A P_n\in \cl S$ for each $n$ and $\cl S$ is closed, we have that $A\in \cl S$. 
Thus, $\cl P_{\rm f}(\cl S)$ is closed. 

Assume that 
$(P_n)_{n\in \bb{N}}$ is a convergent sequence of $\cl S$-abelian projections with limit $P$. 
For all $A,B\in \cl S$, we have 
\begin{eqnarray*}
(PAP)(PBP) & = & \lim_{n\to \infty} (P_nAP_n)(P_nBP_n) = \lim_{n\to \infty} (P_nBP_n)(P_nAP_n)\\ 
& = & (PBP)(PAP).
\end{eqnarray*}
It follows that $\cl P_{\rm a}(\cl S)$ is closed; by Proposition \ref{p_coic}, $\cl P_{\rm c}(\cl S)$ is closed, 
and the proof is complete.
\end{proof}

%%%%%%%%%%%%%%%%%%%%%%%%%%%%%%%%%%%%%%%%%%%%%%%%%%%%%%%%
%%%%%%%%%%%%%     CONVEX CORNERS        %%%%%%%%%%%%%%%%%%%%%%%%%%%%
%%%%%%%%%%%%%%%%%%%%%%%%%%%%%%%%%%%%%%%%%%%%%%%%%%%%%%%%

\section{The first sandwich theorem}\label{s_tst}

In this section, we prove the first of our sandwich theorems. For clarity, the section is split in three subsections. 

\subsection{Convex corners from non-commutative graphs}\label{ss_ccng} 

Part (ii) of the following definition contains a classical notion arising in Graph Theory \cite{cklms}, 
while part (i) introduces a suitable non-commutative version that will play a central role subsequently.

\begin{definition}\label{d_ccinmd}
(i) \ Let $H$ be a Hilbert space of dimension $d$.
A \emph{convex corner} in $\cl L(H)$ (or in $M_d$)
is a non-empty closed convex subset $\cl A$ of $\cl L(H)^+$ such that 
\begin{equation}\label{eq_cc}
A\in \cl A \mbox{ and } 0\leq B\leq A  \mbox{ imply } B\in \cl A;
\end{equation}

(ii) If $d\in \bb{N}$,
a \emph{diagonal convex corner} in $M_d$ is 
a non-empty closed convex subset $\cl C$ of $\cl D_d^+$, such that 
\begin{equation}\label{eq_ccc}
A\in \cl A, \ B\in \cl D_d \mbox{ and } 0\leq B\leq A  \mbox{ imply } B\in \cl A.
\end{equation}
\end{definition}

Conditions (\ref{eq_cc}) and (\ref{eq_ccc}) will be referred to as \emph{hereditarity}. 
If $\cl A$ is a non-empty subset of $\cl L(H)^+$, let
$$\cl A^{\sharp} = \left\{B\in \cl L(H)^+ : \Tr(AB)\leq 1, \mbox{ for all } A\in \cl A\right\}$$
and call $\cl A^{\sharp}$ the \emph{anti-blocker} of $\cl A$. 
Similarly \cite{cklms}, if $\cl C$ is a non-empty subset of $\cl D_d^+$, 
let
$$\cl C^{\flat} = \left\{B\in \cl D_d^+ : \Tr(AB)\leq 1 \mbox{ for all } A\in \cl C\right\}$$
and call $\cl C^{\flat}$ the \emph{diagonal anti-blocker} of $\cl C$.
The following facts are immediate.

\begin{remark}\label{r_trcc}
{\rm 
Let $\cl A\subseteq M_d^+$ (resp. $\cl C\subseteq \cl D_d^+$) be a non-empty set. Then 

(i) \ the set $\cl A^{\sharp}$ (resp. $\cl C^{\flat}$) is a convex corner (resp. a diagonal convex corner) in $M_d$;

(ii) $\cl A^{\sharp\sharp\sharp} = \cl A^{\sharp}$ and $\cl C^{\flat\flat\flat} = \cl C^{\flat}$.
}
\end{remark}

Let $H$ be a finite dimensional Hilbert space. 
For a subset $\cl C\subseteq \cl L(H)$, we let 
$$\her(\cl C) = \left\{A\in \cl L(H)^+ : \ \exists \ B\in \cl C \mbox{ such that } A\leq B\right\}.$$

\begin{proposition}\label{l_gen}
Let $\cl P\subseteq \cl L(H)^+$ be a non-empty bounded set and $\cl A = \her(\overline{\conv}(\cl P))$. 

(i) \ The set $\cl A$ is a convex corner. Moreover, $\cl A^{\sharp} = \cl P^{\sharp}$. 

(ii) Assume that $\cl P$ is a closed set of projections such that if $P\in \cl P$ and $P'$ is a projection with 
$P'\leq P$ then $P'\in \cl P$. If $Q$ is a projection with $Q\in \cl A$ then $Q\in \cl P$. 
\end{proposition}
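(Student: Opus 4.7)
The plan is to verify the three defining clauses of Definition \ref{d_ccinmd} for $\cl A$, then prove the anti-blocker identity, and finally use a compactness argument for part (ii). For (i), non-emptiness, containment in $\cl L(H)^+$, and hereditarity of $\cl A$ are built into its definition via $\her$. Convexity of $\cl A$ is inherited from convexity of $\overline{\conv}(\cl P)$: if $A_j \leq C_j$ with $C_j \in \overline{\conv}(\cl P)$ for $j = 1, 2$, then for $\lambda \in [0,1]$, $\lambda A_1 + (1-\lambda) A_2 \leq \lambda C_1 + (1-\lambda) C_2$, and the right-hand side lies in $\overline{\conv}(\cl P)$. For closedness of $\cl A$, I would use that $\cl P$ is bounded, hence $\overline{\conv}(\cl P)$ is bounded and (being closed) compact in the finite-dimensional space $\cl L(H)$; for $A_n \in \cl A$ with $A_n \to A$ and $A_n \leq C_n \in \overline{\conv}(\cl P)$, a convergent subsequence $C_{n_k} \to C \in \overline{\conv}(\cl P)$ dominates $A$ in the limit, so $A \in \cl A$.

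For the identity $\cl A^{\sharp} = \cl P^{\sharp}$, one direction is immediate from $\cl P \subseteq \cl A$. For the other, given $B \in \cl P^{\sharp}$, the functional $X \mapsto \Tr(XB)$ is bounded by $1$ on $\cl P$, hence by linearity on $\conv(\cl P)$ and by continuity on $\overline{\conv}(\cl P)$. For an arbitrary $A' \in \cl A$ I pick a dominator $C \in \overline{\conv}(\cl P)$ with $A' \leq C$; since $B \geq 0$, conjugation by $B^{1/2}$ yields $\Tr(A'B) = \Tr(B^{1/2} A' B^{1/2}) \leq \Tr(B^{1/2} C B^{1/2}) = \Tr(CB) \leq 1$, so $B \in \cl A^{\sharp}$.

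For (ii), the crux is the identification $\overline{\conv}(\cl P) = \conv(\cl P)$: since $\cl P$ is closed and bounded, it is compact in the finite-dimensional space $\cl L(H)$, and by Carath\'eodory's theorem (applied as the continuous image of the compact product of a simplex with $\cl P^{d^2 + 1}$ under the map $(\lambda_i, P_i) \mapsto \sum_i \lambda_i P_i$) the set $\conv(\cl P)$ is itself compact and hence already closed. Thus any projection $Q \in \cl A$ admits a dominator of the form $C = \sum_{i=1}^k \lambda_i P_i$ with $P_i \in \cl P$, $\lambda_i > 0$, $\sum_i \lambda_i = 1$. For any unit vector $\xi \in QH$ I would pinch: on one hand $\langle C\xi, \xi\rangle \geq \langle Q\xi, \xi\rangle = 1$, and on the other $\langle C\xi, \xi\rangle = \sum_i \lambda_i \langle P_i \xi, \xi\rangle \leq \sum_i \lambda_i = 1$ since each $P_i$ is a projection. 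Equality throughout forces $\langle P_i \xi, \xi\rangle = 1$, hence $P_i \xi = \xi$, for every $i$; so $QH \subseteq P_i H$, i.e. $Q \leq P_i$ for every $i$. The hypothesized hereditarity of $\cl P$ under projection domination then yields $Q \in \cl P$.

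The main technical point, minor but unavoidable, is the identification $\overline{\conv}(\cl P) = \conv(\cl P)$ in (ii): without the passage to a genuine finite convex combination, one could only approximate $C$ by finite sums and the sharp pinching bound $\langle P_i \xi, \xi\rangle = 1$ would not survive the limit. Finite-dimensionality of $H$ together with compactness of $\cl P$ is exactly what makes the Carath\'eodory image argument go through, so everything in the proof rests ultimately on these two assumptions.
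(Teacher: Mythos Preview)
Your proof is correct. Part (i) is essentially the paper's argument, written out in slightly more detail.

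For part (ii) you take a genuinely different route from the paper. The paper does not pass to a finite convex combination; instead it compresses by $Q$: from $Q \leq T \in \overline{\conv}(\cl P)$ and $\|T\| \leq 1$ it deduces $Q = QTQ \in \overline{\conv}(Q\cl P Q)$, and then invokes Milman's partial converse to Krein--Milman (projections are extreme in the positive unit ball) to conclude $Q \in \overline{Q\cl P Q} = Q\cl P Q$, i.e.\ $Q = QPQ$ for some $P \in \cl P$, whence $Q \leq P$. Your approach replaces this extreme-point machinery with Carath\'eodory: compactness of $\cl P$ gives $\overline{\conv}(\cl P) = \conv(\cl P)$, so the dominator is a genuine finite sum $\sum_i \lambda_i P_i$, and then an elementary scalar squeeze forces $Q \leq P_i$. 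Your argument is more hands-on and avoids Milman; the paper's is shorter and does not need to identify $\overline{\conv}(\cl P)$ with $\conv(\cl P)$. Both ultimately rely on compactness from boundedness plus finite dimension, and both end by applying the hereditarity hypothesis on $\cl P$ in the same way.
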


\begin{proof}
(i) It is clear that $\cl A$ is hereditary. 
Since $\overline{\rm conv}(\cl P)$ is convex, $\cl A$ is convex. Suppose that $(T_n)_{n\in \bb{N}}\subseteq \cl A$
and $T_n\to_{n\to\infty} T$. Let $C_n\in \overline{\rm conv}(\cl P)$ be such that $T_n\leq C_n$, $n\in \bb{N}$. 
Since $\overline{\rm conv}(\cl P)$ is compact, $(C_n)_{n\in \bb{N}}$ has a cluster point, say $C$, in $\overline{\rm conv}(\cl P)$. 
But then $T\leq C$ and hence $\cl A$ is closed. 

Since $\cl P\subseteq \cl A$, we have that $\cl A^{\sharp} \subseteq \cl P^{\sharp}$.
The reverse inclusion follows from basic properties of the trace functional. 

(ii) Let $T\in \overline{{\rm conv}}(\cl P)$ be such that $Q\leq T$. Then $Q\leq QTQ$ and hence 
$1 = \|Q\| \leq \|QTQ\| \leq \|T\| \leq 1$, showing that $\|QTQ\| = 1$. Thus, $QTQ \leq Q$ and hence 
$Q = QTQ \in \overline{{\rm conv}}(Q\cl P Q)$. Since $Q$ is an extreme point of 
the unit ball of $\cl L(H)^+$, we have that $Q\in \overline{Q\cl P Q} = Q\overline{\cl P}Q = Q\cl P Q$. 
Let $P\in \cl P$ be such that $Q = QPQ$. We have that $P = Q + P'$ for some projection $P'\leq Q^{\perp}$. 
In particular, $Q\leq P$ and since the set $\cl P$ is hereditary, we conclude that $Q\in \cl P$. 
\end{proof}

Let $\cl S\subseteq \cl L(H)$ be an operator system. Set 
\begin{itemize}
\item $\ap(\cl S) = \her\left(\overline{\conv}\left\{P : P \mbox{ an $\cl S$-abelian projection}\right\}\right)$;
\item $\cp(\cl S) = \her\left(\overline{\conv}\left\{P : P \mbox{ an $\cl S$-clique projection}\right\}\right)$;
\item $\fp(\cl S) = \her\left(\overline{\conv}\left\{P : P \mbox{ an $\cl S$-full projection}\right\}\right)$.
\end{itemize}
We call $\ap(\cl S)$ (resp. $\cp(\cl S)$, $\fp(\cl S)$) the \emph{abelian} 
(resp. \emph{clique}, \emph{full}) \emph{projection convex corner} of $\cl S$. By 
Proposition \ref{l_gen}, these are indeed convex corners while, by 
Proposition \ref{p_coic}, $\ap(\cl S) = \cp(\cl S^c)$.

\begin{remark}\label{r_cpfpi}
{\rm (i) 
For any non-commutative graph $\cl S \subseteq \cl L(H)$, every rank one projection on $H$ 
is $\cl S$-abelian and $\cl S$-clique. Thus, 
$\ap(\cl S)$ and $\cp(\cl S)$ always contain the convex corner $\{T\in \cl L(H)^+ : \Tr(T) \leq 1\}$.
On the other hand, $\fp(\cl S)$ may be zero, e.g. in the case 
where $\cl S = $ ${\rm span}\{I, E_{1,2}, E_{1,3},$ $E_{2,1}, E_{3,1}\}\subseteq M_3$.

(ii) 
By Remark (ii) after Definition \ref{d_ab}, $\fp(\cl S)\subseteq \cp(\cl S)$. 
Strict inclusion may occur even in the case where $\fp(\cl S)\neq \{0\}$, for example, 
if $\cl S = {\rm span}\{E_{1,2}, E_{2,1}, I_2\}\subseteq M_2$. 
}
\end{remark}

Let $G = (X,E)$ be a graph on $d$ vertices. Recall that a subset $S\subseteq X$ 
is called \emph{independent} (resp. a \emph{clique}) if whenever $x,y\in S$ and $x\neq y$, we have that $x\not\sim y$ 
(resp. $x\sim y$). 
The \emph{vertex packing polytope} \cite{gls} of $G$ is the set
$$\vp(G) = \conv\left\{\chi_{S} : S\subseteq X \mbox{ an independent set}\right\},$$
while the \emph{fractional vertex packing polytope} \cite{gls} of $G$ is the set
$$\fvp(G) = \left\{x\in \bb{R}_+^d : \sum_{i\in K}x_i \leq 1,  \mbox{ for all cliques } K\subseteq X\right\}.$$
These sets are diagonal convex corners in $M_d$, if 
we identify an arbitrary element $v = (v_i)_{i=1}^d$ of $\bb{R}^d$ with the matrix 
with entries $v_1,\dots,v_d$ down the diagonal and zeros elsewhere (see \cite{knuth}).

We next show that $\ap(\cl S)$ is a suitable non-commutative version of $\vp(G)$,
while $\cp(\cl S)^{\sharp}$ and $\fp(\cl S)^{\sharp}$ are suitable non-commutative versions of $\fvp(G)$.

%%%%%%%%%%%%%%%%%%%%%%%%%%%%%%%%%%%%%%%%%%%%%%%%%%%%%%%
%%%%%%%%%%%%%.      EXPECTATIONS.       %%%%%%%%%%%%%%%%%%%%%%%%%%%%%
%%%%%%%%%%%%%%%%%%%%%%%%%%%%%%%%%%%%%%%%%%%%%%%%%%%%%%%%

\begin{theorem}  \label{apvp} 
Let $G = (X,E)$ be a graph on $d$ vertices. Then 

(i) \ \ $\Delta ( \ap (\cl S_G)) = \cl D_X \cap  \ap (\cl S_G) = \vp (G)$;

(ii) \ $\Delta ( \cp (\cl S_G)) = \cl D_X \cap  \cp (\cl S_G) = \vp (G^c)$;  
  
(iii) $\Delta ( \fp (\cl S_G)) = \cl D_X \cap \fp (\cl S_G) = \vp (G^c)$.
\end{theorem}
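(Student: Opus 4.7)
The plan for all three parts is to establish the chain
\[ \vp(\cdot) \subseteq \cl D_X \cap \Omega \subseteq \Delta(\Omega) \subseteq \vp(\cdot), \]
where $\Omega \in \{\ap(\cl S_G), \cp(\cl S_G), \fp(\cl S_G)\}$ is the relevant convex corner and the polytope on the right is $\vp(G)$ in (i) and $\vp(G^c)$ in (ii) and (iii). The middle inclusion is automatic since $\Delta$ fixes diagonal operators. The leftmost inclusion will be obtained in (i) and (ii) by noting that, for $S$ an independent set of $G$ (resp.\ $K$ a clique of $G$), the family $\{e_x\}_{x \in S}$ (resp.\ $\{e_x\}_{x \in K}$) is $\cl S_G$-independent (resp.\ an $\cl S_G$-clique), so Proposition \ref{p_int} yields that $P_S = \sum_{x \in S} e_x e_x^*$ is $\cl S_G$-abelian (resp.\ $\cl S_G$-clique), whence $\chi_S = P_S \in \Omega$. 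The real work is in each of the rightmost inclusions, which, by convexity, continuity of $\Delta$, closedness of the classical polytope and its hereditarity, reduces to showing $\Delta(P) \in \vp(\cdot)$ for a single projection $P$ of the appropriate type.

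For (i), I would pick an orthonormal $\cl S_G$-independent basis $\{\xi_i\}_{i=1}^k$ of $PH$ (Proposition \ref{p_int}(i)). Expanding the relation $\xi_i \xi_j^* \perp \cl S_G$ in the matrix-unit basis yields $\xi_i(x)\overline{\xi_j(y)} = 0$ whenever $i \ne j$ and $x \simeq y$; this simultaneously forces the supports $S_i := \supp(\xi_i)$ to be pairwise disjoint and prohibits edges of $G$ between distinct $S_i$'s. Consequently every transversal $T_{\vec{x}} := \{x_1,\ldots,x_k\}$ with $x_i \in S_i$ is independent in $G$, and a direct computation using the disjointness of the supports gives
\[ \Delta(P) = \sum_{\vec{x}} \Big(\prod_i |\xi_i(x_i)|^2\Big)\, \chi_{T_{\vec x}} \in \vp(G). \]

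The hard part is (ii), since for an $\cl S_G$-clique basis $\{\xi_i\}$ the relations $\xi_i(x)\overline{\xi_j(y)} = 0$ only hold for $i \ne j$, $x \ne y$ and $x \not\sim y$ in $G$, so supports may overlap. I would exploit that, for each such pair $(x,y)$, the rank-one $k \times k$ matrix $\big(\xi_i(x)\overline{\xi_j(y)}\big)_{i,j}$ is forced to be diagonal and hence has at most one nonzero entry. Writing $v_x := (\xi_i(x))_{i=1}^k$, this means that whenever $x \ne y$ with $x \not\sim y$ in $G$, either $v_x = 0$, $v_y = 0$, or both $v_x$ and $v_y$ are supported at a single common coordinate. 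I then partition $X = Z \sqcup U \sqcup \bigsqcup_i T_i$, with $Z := \{x : v_x = 0\}$, $T_i := \{x : \xi_i(x) \ne 0 \text{ and } \xi_j(x) = 0 \text{ for all } j \ne i\}$, and $U := \{x : |\supp(v_x)| \ge 2\}$. The structural constraint implies that $U$ is a clique of $G$ and that, for $i \ne j$, every vertex of $T_i$ is adjacent in $G$ to every vertex of $T_j$ and to every vertex of $U$, while within a single $T_i$ no constraint is imposed. In particular, for every choice $x_i \in T_i$ (one per nonempty $T_i$), the set $K_{\vec{x}} := U \cup \{x_i\}_i$ is a clique of $G$, so $\chi_{K_{\vec x}} \in \vp(G^c)$. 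Setting $\alpha_i := \sum_{x \in T_i} |\xi_i(x)|^2 \le \|\xi_i\|^2 = 1$, the convex combination
\[ F := \sum_{\vec{x}} \Big(\prod_i \tfrac{|\xi_i(x_i)|^2}{\alpha_i}\Big)\, \chi_{K_{\vec x}} \in \vp(G^c) \]
dominates $\Delta(P)$ entry by entry (on $U$ via $\Delta(P)(y,y) = \|v_y\|^2 \le 1 = F(y)$, on each $T_i$ via $\Delta(P)(y,y) = |\xi_i(y)|^2 \le |\xi_i(y)|^2/\alpha_i = F(y)$, trivially on $Z$), and hereditarity of $\vp(G^c)$ then gives $\Delta(P) \in \vp(G^c)$.

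For (iii), the left inclusion follows because, for a clique $K$ of $G$, $\cl L(P_K H)$ is spanned by the matrix units $e_x e_y^*$ with $x,y \in K$, all of which lie in $\cl S_G$; hence $P_K$ is $\cl S_G$-full. The right inclusion is immediate from the remark after Definition \ref{d_ab}, which gives $\fp(\cl S_G) \subseteq \cp(\cl S_G)$, combined with (ii).
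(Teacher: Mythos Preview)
Your proof is correct. The overall strategy (the three-term chain, and the reduction of the rightmost inclusion to single projections) matches the paper, but your argument for the key step $\Delta(P)\in\vp(\cdot)$ is genuinely different.

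The paper treats (i) and (ii) by a single device: from the array $\big(|\xi_i(x)|^2\big)_{i,x}$ it builds a doubly stochastic $d\times d$ matrix (padding with uniform rows when $k<d$), applies the Birkhoff--von Neumann theorem to write it as a convex combination of permutation matrices, and observes that each permutation matrix, restricted to the first $k$ rows, picks out an independent set (respectively a clique), using only the implication $\xi_i(x)\overline{\xi_j(y)}\ne 0\Rightarrow x\not\simeq y$ (respectively $x\simeq y$) for $i\ne j$.

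You instead extract more combinatorial structure from those same implications. In (i) you exploit the case $x=y$ (via $x\simeq x$) to get pairwise disjointness of the supports $S_i$, after which $\Delta(P)$ is \emph{exactly} a product-measure convex combination of transversal indicators; this bypasses Birkhoff--von Neumann entirely and is shorter than the paper's argument. In (ii), where disjointness fails, your analysis of the rank-$\le 1$ matrices $v_xv_y^*$ and the resulting partition $X=Z\sqcup U\sqcup\bigsqcup_iT_i$ is a nice substitute: it recovers enough adjacency structure to dominate $\Delta(P)$ by an explicit convex combination of clique indicators. The trade-off is a longer case analysis (including the bookkeeping when some $T_i$ are empty), whereas the paper's Birkhoff--von Neumann argument handles (i) and (ii) uniformly with no cases. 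Both routes are valid; yours is more elementary and self-contained, the paper's more uniform.
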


\begin{proof} 
(i) Let $S$ be an independent set in $G$.  
Then $e_{x}e_{y}^* \in \cl S_G^\perp$ for $x,y\in S$ with $x\neq y$, and 
hence $\sum_{x\in S} e_x e_x^*$ is an $\cl S_G$-abelian projection in $\cl D_X$. 
Since $\cl D_X \cap ( \ap (\cl S_G))$ is a convex set, this implies  
$\vp(G) \subseteq \cl D_X \cap \ap(\cl S_G) \subseteq \Delta(\ap(\cl S_G)).$  

It remains to show that $\Delta(\ap(\cl S_G)) \subseteq \vp(G)$. 
Let $\{v_1, \ldots, v_m \}$ be an $\cl S_G$-independent set; clearly, $m \le d$. 
Suppose that $m = d$; then $I\in \ap(\cl S_G)$ and hence $\ap(\cl S_G) = \{T\in M_d^+ : \|T\| = 1\}$.
Note that
$\cl S_G\subseteq {\rm span}\{v_iv_i^* : i\in [m]\}$.
In particular, $e_xe_x^*\in {\rm span}\{v_iv_i^* : i\in [m]\}$ for every $x\in X$, 
and this easily implies that $\{v_i\}_{i=1}^m = \{\zeta_xe_x\}_{x\in X}$, for some 
unimodular constants $\zeta_x\in \bb{C}$, $x\in X$. It follows that $G$ is the empty graph and hence 
$\vp(G) = \{A\in \cl D_d^+ : \|A\| \leq 1\}$.
It is now clear that $\Delta(\ap(\cl S_G)) \subseteq \vp(G)$. 

Suppose that $m < d$ and set $P = \sum_{i=1}^m v_iv_i^*$. 
Write 
$v_i = \sum_{x\in X} \lambda_x^{(i)} e_x$ and $a^{(i)}_x= \left|\lambda_x^{(i)}\right|^2$, $i\in [m]$, $x\in X$.
 Let $i \ne j$; then 
$v_i v_j^*= \sum_{x,y} \lambda_x^{(i)} \overline{\lambda}_y^{(j)} e_xe_y^* \in \cl S_G^\perp.$  
Thus,
\begin{equation} \label{SGind} 
a_x^{(i)} a_y^{(j)} \ne 0 \ \Longrightarrow \ x \not \simeq y \mbox{ in } G.
\end{equation} 

Now 
$$P = \sum_{i=1}^m \sum_{x,y \in X} \lambda_x^{(i)}\overline{\lambda}_y^{(i)} e_xe_y^*,$$ 
and so 
\begin{equation} \label{deltaP} 
\Delta (P) = \sum_{x\in X} \sum_{i=1}^{m} a_x^{(i)} e_xe_x^*. 
\end{equation}  
Note that $\sum_{x\in X} a_x^{(i)} = \ip{v_i}{v_i} = 1$, $i= 1,\ldots, m$. 
Since $\Delta(P) \le I$, we have $\sum_{i=1}^m a_x^{(i)} \le 1$, $x\in X$.
For each $x\in X$, let 
$r_x = 1-\sum_{i=1}^m a_x^{(i)}$; thus, $r_x \geq 0$. 
Set 
\begin{align*} 
m_{i,x} =  &  \ a_x^{(i)}, \mbox{ if } 1 \le i \le m, \\ 
m_{i,x} =  & \ \frac{r_x}{d-m},  \mbox{ if } m+1 \le i \le d,
\end{align*}
and let $M = \left( m_{i,x}\right)_{i,x}$; thus, $M\in M_d$.
Observe that the matrix $M$ is doubly stochastic. Indeed,
if $1\leq i \leq m$ then $\sum_{x\in X} m_{i,x} = \sum_{x\in X} a_x^{(i)} = 1$.  
On the other hand, 
$d - \sum_{x \in X} r_x = \sum_{i=1}^m \sum_{x\in X} a_x^{(i)} = m$ and hence, if
$m+1 \le i \le d$ we have $\sum_{x\in X} m_{i,x} = \sum_{x\in X} \frac{r_x}{d-m} = 1$. 
Finally, if $x\in X$ then $\sum_{i=1}^d m_{i,x} = \sum_{i=1}^d a_x^{(i)} + r_x = 1$.

By the Birkhoff-von Neumann theorem, 
there exist $l\in \bb{N}$, $\gamma_k > 0$
and permutation matrices $P^{(k)} = \left( p^{(k)}_{i,x} \right)_{i,x} \in M_d$, $k \in [l]$, 
such that $\sum_{k=1}^l \gamma_k =1$ and 
\begin{equation}\label{eq_Mgamma}
M = \sum_{k=1}^l \gamma_k P^{(k)}.
\end{equation}

Set
$$Q_k = \sum_{x\in X} \sum_{i=1}^{m} p_{i,x}^{(k)} e_xe_x^*.$$  
For $x\in X$ and $1 \le i \le m$, by (\ref{eq_Mgamma}) we have 
\begin{equation}\label{eq_axg}
a_x^{(i)} = \sum_{k=1}^l \gamma_k p_{i,x}^{(k)}
\end{equation}
and so 
\eqref{deltaP} implies
\begin{equation}\label{eq_deltap}
\Delta (P)=\sum_{k=1}^l  \gamma_k Q_k.
\end{equation}

Suppose that $i,j\in [m]$ and $x,y\in X$ are such that 
$(i,x) \neq (j,y)$ and $p_{i,x}^{(k)} = p_{j,y}^{(k)} = 1$.  
Since $P^{(k)}$ is a  permutation matrix, $i \neq j$ and $x \neq y$.  
By (\ref{eq_axg}),
$a_x^{(i)} \ne 0$ and $a_y^{(j)} \ne 0$; by \eqref{SGind}, $x$ and $y$ are (distinct and) non-adjacent vertices in $G$.  
Thus, each $Q_k$ is a projection in $\vp(G)$ and (\ref{eq_deltap}) implies that 
$\Delta(P) \in \vp(G)$.
It follows that $\Delta(T) \in \vp(G)$ whenever $T\in \conv\{P : P \mbox{ is } \cl S_G\mbox{-abelian}\}$. 
Since $\vp(G)$ is closed, $\Delta(T) \in \vp(G)$ whenever 
$T\in \overline{\conv}\{P : P \mbox{ is } \cl S_G\mbox{-abelian}\}$; 
since $\vp(G)$ is hereditary, $\Delta(T) \in \vp(G)$ whenever $T\in \ap(\cl S_G)$,
and the proof of (i) is complete.

\smallskip

(ii)-(iii) 
Let $K$ be an independent set in $G^c$, that is, a clique in $G$.  
Then $e_{x}e_{y}^* \in \cl S_G$ for all $x,y\in K$ and so  
$\sum_{x\in K}e_{x}e_{x}^*$ is an $\cl S_G$-full  projection. 
Together with Remark \ref{r_cpfpi}, this implies
$$\vp(G^c) \subseteq \cl D_X \cap \fp(\cl S_G) \subseteq \cl D_X \cap \cp(\cl S_G) \subseteq \Delta(\cp(\cl S_G))$$  
and 
$$\vp(G^c) \subseteq \Delta( \fp(\cl S_G)) \subseteq \Delta(\cp(\cl S_G)).$$   

It remains to show that $\Delta(\cp(\cl S_G)) \subseteq \vp(G^c)$. 
Let $\{v_1, \ldots, v_m \}$ be an $\cl S_G$-clique.
Suppose that $m = d$. 
Since $\cl S_G^{\perp}\subseteq {\rm span}\{v_i v_i^* : i\in [m]\}$, we have that 
$\cl S_G^{\perp}$ is a commutative family of operators. 
This easily implies that $G$ is the complete graph; thus, 
$\cp(\cl S_G) = \{T\in M_d : 0 \leq T \leq I\}$, 
$\vp(G^c) = \{T\in \cl D_d : 0 \leq T \leq I\}$, 
and $\Delta(\cp(\cl S_G)) = \vp(G^c)$. 

Assume that $m < d$ and let
$P = \sum_{i=1}^m v_iv_i^*$ be the corresponding $\cl S_G$-clique projection.  
Writing $v_i = \sum_{x\in X} \lambda_x^{(i)}e_x$, 
similarly to the proof of (i), we see that 
if $i \ne j$ and $\lambda_x^{(i)} \lambda_y^{(j)} \ne 0$ then 
$x \simeq y$ in $G$. 
Following the proof of (i) we now obtain
that $\Delta(P) \in \vp(G^c)$, and consequently that
$\Delta(\cp(\cl S_G)) \subseteq \vp(G^c)$, completing the proof.  
\end{proof}

\begin{lemma} \label{Deltasharp} 
Let $\cl A$ be a diagonal convex corner, and $\cl B$ be a convex corner, in $M_d$, such that 
\begin{equation} \label{AB} 
\cl A= \cl D_d \cap \cl B =  \Delta (\cl B).
\end{equation} 
Then 
$\cl A^\flat=\cl D_d \cap  \cl B^\sharp = \Delta ( \cl B^\sharp).$
\end{lemma}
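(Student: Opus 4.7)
The plan is to exploit the two different descriptions of $\cl A$ provided by (\ref{AB}) together with the elementary identity
\[
\Tr(AB) = \Tr(\Delta(A)B) = \Tr(A\Delta(B)), \quad \text{whenever } A \text{ or } B \text{ is diagonal},
\]
which holds because $\Tr(AB) = \sum_{i,j} a_{ij}b_{ji}$ and, if either matrix is diagonal, only the terms with $i = j$ survive. This small observation is the whole engine: it lets us freely replace an element of $\cl B$ by its diagonal (hence by an element of $\cl A$) inside a trace pairing against a diagonal operator, and conversely.

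I would then prove the two claimed equalities by running a short cycle of three inclusions. First, for $\Delta(\cl B^\sharp) \subseteq \cl A^\flat$, take $C\in \cl B^\sharp$; then $\Delta(C)\in \cl D_d^+$, and for any $A\in \cl A\subseteq \cl B$ the identity above gives $\Tr(A\Delta(C)) = \Tr(AC)\leq 1$. Second, for $\cl A^\flat \subseteq \cl D_d\cap \cl B^\sharp$, take $B\in \cl A^\flat\subseteq \cl D_d^+$; for $A\in \cl B$, the identity gives $\Tr(AB) = \Tr(\Delta(A)B)$, and since $\Delta(A)\in \Delta(\cl B) = \cl A$ by hypothesis, this is at most $1$, so $B\in \cl B^\sharp$. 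Third, $\cl D_d\cap \cl B^\sharp \subseteq \Delta(\cl B^\sharp)$ is immediate because any $B$ in the left side satisfies $B = \Delta(B)$. Chaining these inclusions closes the cycle and yields both equalities simultaneously.

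There is no real obstacle; the content of the lemma is entirely concentrated in the hypothesis $\cl A = \cl D_d\cap \cl B = \Delta(\cl B)$, which is exactly what is needed to pass between the ``diagonal pairing'' and the ``full pairing'' in each direction. The only thing to be careful about is making sure both equalities (not just $\cl A^\flat = \cl D_d\cap\cl B^\sharp$) are extracted from the cyclic chain, and noting that positivity is preserved under $\Delta$, so membership in $\cl D_d^+$ is never in question.
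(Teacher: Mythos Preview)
Your proof is correct and follows essentially the same approach as the paper: both rely on the identity $\Tr(AB)=\Tr(\Delta(A)B)=\Tr(A\Delta(B))$ when one factor is diagonal, together with the two forms of the hypothesis $\cl A=\cl D_d\cap\cl B=\Delta(\cl B)$. The only difference is organizational: you close a cycle of three inclusions $\Delta(\cl B^\sharp)\subseteq\cl A^\flat\subseteq\cl D_d\cap\cl B^\sharp\subseteq\Delta(\cl B^\sharp)$, whereas the paper proves the two equalities separately via four inclusions.
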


\begin{proof} 
If \eqref{AB} holds then $\cl A \subseteq  \cl B$ and so $ \cl B^\sharp \subseteq  \cl A^\sharp$.  
Thus, $\cl D_d \cap  \cl B^\sharp \subseteq \cl D_d \cap \cl A^\sharp = \cl A^\flat$.   
For the reverse inclusion, let $T \in \cl A^\flat$ and $N \in  \cl B$.  
By  \eqref{AB}, $\Delta(N) \in \cl A.$  
Hence 
$$\Tr\left(TN\right) = \Tr\left(\Delta(T)N\right) = \Tr\left(T\Delta(N)\right) \le 1,$$ 
and so
$T \in  \cl B^\sharp \cap \cl D_d$.  
Thus, $\cl A^\flat \subseteq  \cl D_d \cap  \cl B^\sharp$. 

Trivially, $\cl D_d \cap  \cl B^\sharp \subseteq \Delta ( \cl B^\sharp).$  
Fix $M \in \Delta (  \cl B^\sharp)$ and $N \in  \cl B$, and let $R \in  \cl B^\sharp$ be such that 
$M = \Delta(R)$.  
By   \eqref{AB}, $\Delta(N) \in  \cl B$ and hence 
$$\Tr (MN) = \Tr (\Delta(R)N) = \Tr (R \Delta(N)) \le 1,$$
giving $M \in  \cl B^\sharp \cap \cl D_d.$  
Thus $ \Delta(\cl B^\sharp) \subseteq \cl D_d \cap  \cl B^\sharp$ and the proof is complete.
\end{proof}

\begin{corollary} \label{antiblockvp} 
Let $G = (X,E)$ be a graph on $d$ vertices. Then 

(i) \ \ $\Delta \left(\ap (\cl S_G)^\sharp\right) = \cl D_X \cap  \ap \left(\cl S_G\right)^\sharp = \vp(G)^\flat$;

(ii) \ $\Delta \left( \cp (\cl S_G)^\sharp\right) = \cl D_X \cap \cp \left(\cl S_G\right)^\sharp = \fvp(G)$;   
  
(iii) $\Delta \left(\fp (\cl S_G)^\sharp\right) = \cl D_X \cap \fp \left(\cl S_G\right)^\sharp = \fvp(G)$.
\end{corollary}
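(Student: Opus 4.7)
The plan is to reduce all three identities to a single general fact, Lemma \ref{Deltasharp}, applied to the pairings supplied by Theorem \ref{apvp}, together with the classical identification $\vp(G^c)^\flat = \fvp(G)$. In each case the outer object $\cl B$ is a convex corner in $M_d$ by Proposition \ref{l_gen}(i), while the diagonal object $\cl A$ is the vertex packing polytope of $G$ or $G^c$; both are diagonal convex corners in $M_d$ because the family of independent sets in a graph is closed under taking subsets. Theorem \ref{apvp} supplies the hypothesis \eqref{AB} of the lemma, in the respective forms $\vp(G) = \cl D_X \cap \ap(\cl S_G) = \Delta(\ap(\cl S_G))$ for (i), $\vp(G^c) = \cl D_X \cap \cp(\cl S_G) = \Delta(\cp(\cl S_G))$ for (ii), and $\vp(G^c) = \cl D_X \cap \fp(\cl S_G) = \Delta(\fp(\cl S_G))$ for (iii). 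A direct application of Lemma \ref{Deltasharp} then yields
\[
\vp(G)^\flat = \cl D_X \cap \ap(\cl S_G)^\sharp = \Delta(\ap(\cl S_G)^\sharp),
\]
which is exactly (i), together with the analogues for (ii) and (iii) in which the left-hand side is $\vp(G^c)^\flat$.

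It remains to identify $\vp(G^c)^\flat$ with $\fvp(G)$. Since independent sets in $G^c$ are precisely the cliques in $G$, we have $\vp(G^c) = \conv\{\chi_K : K \subseteq X \text{ a clique in } G\}$. Unpacking the definition of the diagonal anti-blocker, a diagonal element $x \in \cl D_d^+$ lies in $\vp(G^c)^\flat$ if and only if $\sum_{i \in K} x_i = \Tr(x\chi_K) \leq 1$ for every clique $K$, which is precisely the defining condition of $\fvp(G)$. Substituting this into the identities obtained above delivers parts (ii) and (iii).

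There is no serious obstacle: once the correspondence between the entries of the pairing is in place, the lemma and the classical identification finish the proof. The only detail worth double-checking is that $\vp(G)$ and $\vp(G^c)$ genuinely satisfy the hereditary condition \eqref{eq_ccc} required of a diagonal convex corner, which follows by expressing any diagonal $y \leq \chi_S$ (with $S$ independent) as a convex combination of the vectors $\chi_{S'}$ for $S' \subseteq S$, all of which are again independent sets.
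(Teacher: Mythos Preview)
Your proof is correct and follows precisely the same route as the paper: apply Lemma \ref{Deltasharp} to the three pairings established in Theorem \ref{apvp}, and then invoke the classical identity $\fvp(G) = \vp(G^c)^{\flat}$ to finish (ii) and (iii). The paper's own proof is the one-line ``Immediate from Lemma \ref{Deltasharp}, Theorem \ref{apvp} and the fact that $\fvp(G) = \vp(G^c)^{\flat}$,'' so you have simply unpacked that sentence with appropriate care.
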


\begin{proof}
Immediate from  Lemma \ref{Deltasharp}, Theorem \ref{apvp} and the fact that $\fvp(G) = \vp(G^c)^{\flat}$.
\end{proof}

%%%%%%%%%%%%%%%%%%%%%%%%%%%%%%%%%%%%%%%%%%%%%%%%%%%%%%%
%%%%%%%%%%%%%.      PARAMETERS - QUANTUM AND CLASSICAL       %%%%%%%%%%%%%%
%%%%%%%%%%%%%%%%%%%%%%%%%%%%%%%%%%%%%%%%%%%%%%%%%%%%%%%%

\subsection{Non-commutative graph parameters}\label{ss_ncgp}

In this subsection, we introduce various parameters of non-commutative graphs 
and point out their relation with classical graph parameters. 

If $\cl A \subseteq M_d$ is a bounded set, let 
\begin{equation}\label{eq_thtr}
\theta(\cl A) = \sup\left\{\Tr(A) : A\in \cl A\right\}.
\end{equation}

\begin{remark}\label{r_thetaher}
{\rm 
If $\cl P \subseteq M_d$ is a bounded set and 
$\cl A = {\rm her}\left(\overline{{\rm conv}}(\cl P)\right)$ then 
$\theta(\cl A) = \theta(\cl P)$. 
%
%(ii) If the set $\cl A$ is compact then the supremum in the definition of $\theta$ is achieved.
}
\end{remark}

\begin{proof}
It is clear that $\theta(\cl A) = \Tr(A)$ for some $A\in \overline{{\rm conv}}(\cl P)$. 
Since the trace is affine and continuous, 
by Bauer's Maximum Principle (see \cite[7.69]{ab}), $A$ can be chosen to be an extreme point 
of $\overline{{\rm conv}}(\cl P)$.
By Milman's Theorem, $A\in \overline{\cl P}$; thus $\theta(\cl A) = \theta(\cl P)$.
\end{proof}

Let $H$ be a $d$-dimensional Hilbert space and $\cl S\subseteq \cl L(H)$ be an operator system. 
We set 
\begin{itemize}
\item[(i)] $\alpha(\cl S) = \theta\left(\ap(\cl S)\right)$ -- the \emph{independence number} of $\cl S$ \cite{dsw};
\item[(ii)] $\omega(\cl S) = \theta\left(\cp(\cl S)\right)$ -- the \emph{clique number} of $\cl S$;
\item[(iii)] $\tilde{\omega}(\cl S) = \theta\left(\fp(\cl S)\right)$ -- the \emph{full number} of $\cl S$;
\item[(iv)] $\omega_{\rm f}(\cl S) = \theta\left(\ap(\cl S)^{\sharp}\right)$ -- the 
\emph{fractional clique number} of $\cl S$;
\item[(v)] $\kappa(\cl S) = \theta\left(\cp(\cl S)^{\sharp}\right)$ -- the 
\emph{complementary fractional clique number} of $\cl S$;
\item[(vi)] $\nph(\cl S) = \theta\left(\fp(\cl S)^{\sharp}\right)$ -- the 
\emph{complementary fractional full number} of $\cl S$;
\item[(vii)] $\chi(\cl S) = \min\{k\in \bb{N} : \mbox{ there exist } \ \cl S\mbox{-abelian projections } P_1,\dots,P_k\\ \mbox{ with } 
\sum_{l=1}^k P_l = I\}$
-- the \emph{chromatic number} of $\cl S$ \cite{p_lect}.
\end{itemize}

%\smallskip

It follows from Remark \ref{r_thetaher} that the parameters $\alpha$, $\omega$ and $\tilde{\omega}$
take non-negative integer values. In fact, by Remark \ref{r_closeds}, 
$\alpha(\cl S)$ (resp. $\omega(\cl S)$) coincides with the maximum size of an 
$\cl S$-independent set (resp. an $\cl S$-clique).

\smallskip

A subspace $\cl J\subseteq \cl L(H)$ will be called an \emph{operator anti-system} if 
there exists an operator system $\cl S$ such that $\cl J = \cl S^{\perp}$. 
(Note that such subspaces were called trace-free non-commutative graphs in \cite{stahlke}.)
Let $\cl J\subseteq \cl L(H)$ be an operator anti-system. Recall \cite{km} that a 
\emph{strong independent set} for $\cl J$ is an orthonormal set $\{v_1,\dots,v_m\}$ of vectors in $H$ 
such that $v_iv_j^* \perp \cl J$ for all $i,j = 1,\dots,m$. 
It is clear that a subset $\{v_1,\dots,v_m\}$ is a strong independent set for $\cl J$ if and only if 
the projection onto its span is $\cl J^{\perp}$-full. 
The \emph{strong chromatic number} $\hat{\chi}(\cl J)$ of an operator anti-system $\cl J$ \cite{km}
is defined to be the smallest positive integer $k$ for which there exists an orthonormal basis of $H$ 
that can be partitioned into $k$ strong independent sets.

We now recall some classical graph parameters. Let $G = (X,E)$ be a graph on $d$ vertices. 
\begin{itemize}
\item[(i)] The \emph{independence number} $\alpha(G)$ of $G$ is the size of a maximal independent set in $G$;
\item[(ii)] The \emph{clique number} $\omega(G)$ of $G$ is the size of a maximal clique of $G$;
\item[(iii)] The \emph{chromatic number} $\chi(G)$ of $G$ is the smallest number of 
independent sets in $G$ with union $X$;
\item[(iv)] The \emph{fractional clique number} $\omega_{\rm f}(G)$ of $G$ is given by 
$$\omega_{\rm f}(G) = \max\left\{\sum_{x\in X} w_x : w_x \geq 0, x\in X, \  \sum_{x\in S}w_x \leq 1, \ \forall 
\mbox{ indep. set } S\right\}.$$
\end{itemize}

\noindent {\bf Remark. } Let $\cl S\subseteq M_d$ be an operator system. 
By Proposition \ref{l_gen} (i), 
\begin{equation}\label{eq_omef}
\omega_{\rm f}(\cl S) = \max\left\{\Tr(A) : A\in M_d^+, \Tr(AP)\leq 1, \ \forall \ \cl S\mbox{-abelian projection } P\right\}.
\end{equation}
Thus, the fractional clique number $\omega_{\rm f}(G)$ of a graph $G$ is given 
analogously to $\omega_{\rm f}(\cl S_G)$, but restricting the matrices $A$ in (\ref{eq_omef}) 
to be diagonal, and the projections $P$ to arise from independent sets of $G$. 
In fact, we have the following result, which is a direct consequence of Theorem \ref{apvp} and Corollary \ref{antiblockvp}.
Note that part (i) was noted in \cite{dsw}; we include it here for completeness.

\begin{corollary}\label{c_cnoncp}
Let $G = (X,E)$ be a graph on $d$ vertices. Then 

(i) \ \  $\alpha(G) = \alpha(\cl S_G)$;

(ii) \ $\omega(G) = \omega(\cl S_G) = \tilde{\omega}(\cl S_G)$;

(iii) $\omega_{\rm f}(G) = \omega_{\rm f}(\cl S_G)$;

(iv) \ $\omega_{\rm f}(G^c) = \kappa(\cl S_G) = \nph(\cl S_G)$.
\end{corollary}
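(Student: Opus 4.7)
My approach will exploit the identity $\Tr(A) = \Tr(\Delta(A))$ for every $A \in M_d$, which holds because $\Delta$ preserves diagonal entries. As an immediate consequence, for any bounded set $\cl C \subseteq M_d^+$ one has $\theta(\cl C) = \theta(\Delta(\cl C))$. Thus each of the six non-commutative parameters appearing on the right-hand sides of (i)--(iv) rewrites as a trace supremum over its diagonal image, and Theorem \ref{apvp} and Corollary \ref{antiblockvp} identify these images as classical convex corners in $\cl D_d$.

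For part (i), this gives $\alpha(\cl S_G) = \theta(\ap(\cl S_G)) = \theta(\vp(G))$. Since $\Tr$ is linear and $\vp(G) = \conv\{\chi_S : S \text{ independent in } G\}$, the supremum is attained at some $\chi_S$, and $\Tr(\chi_S) = |S|$ then yields $\max_S |S| = \alpha(G)$. Applying the same argument with $G^c$ in place of $G$ and invoking parts (ii) and (iii) of Theorem \ref{apvp}, I obtain $\omega(\cl S_G) = \tilde{\omega}(\cl S_G) = \theta(\vp(G^c)) = \alpha(G^c) = \omega(G)$, establishing (ii).

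For (iii), Corollary \ref{antiblockvp}(i) gives $\omega_{\rm f}(\cl S_G) = \theta(\vp(G)^\flat)$. Unwinding the definition of the diagonal anti-blocker, and using $\vp(G) = \conv\{\chi_S : S \text{ independent}\}$, the set $\vp(G)^\flat$ is precisely the set of diagonal $B = \mathrm{diag}(b_1,\dots,b_d) \geq 0$ satisfying $\sum_{x \in S} b_x \leq 1$ for every independent $S \subseteq X$; maximising $\Tr(B) = \sum_x b_x$ over this region is by definition $\omega_{\rm f}(G)$. For (iv), Corollary \ref{antiblockvp}(ii),(iii) give $\kappa(\cl S_G) = \nph(\cl S_G) = \theta(\fvp(G))$, and since the cliques of $G$ are exactly the independent sets of $G^c$, one has $\fvp(G) = \vp(G^c)^\flat$, so the same identification yields $\theta(\fvp(G)) = \omega_{\rm f}(G^c)$.

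In summary, once the observation $\theta(\cl C) = \theta(\Delta(\cl C))$ is in place, the corollary is a routine translation between the non-commutative and classical languages; no substantive obstacle arises. The only point warranting care is the bookkeeping of which cliques and independent sets appear after passing between $G$ and $G^c$ through the operations $\sharp$ and $\flat$.
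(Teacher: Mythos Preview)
Your proposal is correct and follows exactly the route the paper indicates: the paper gives no proof at all, stating only that the corollary is ``a direct consequence of Theorem \ref{apvp} and Corollary \ref{antiblockvp}''; you have simply (and accurately) filled in the details via the identity $\theta(\cl C)=\theta(\Delta(\cl C))$.
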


\begin{remark}\label{r_cnoncp}
{\rm 
Let $G$ be a graph. It was shown in \cite{p_lect} that $\chi(G) = \chi(\cl S_G)$, 
and in \cite{km} that
$\hat{\chi}(\cl S_G^{\perp}) = \chi(G^c)$.
We complement these statements by the following proposition. 
} 
\end{remark}

Let 
$\cl S_n = {\rm span}\{e_ie_j^*, I_d : i \ne j \} \subseteq M_n$. 
(Note that $\cl S_n = \cl S_{E_n}^c$, where $E_n$ is the empty graph on $n$ vertices.)  
In \cite{lpt} it was shown that $\alpha(\cl S_2) = 1$ while, in \cite[Examples 4, 22]{km} -- that 
$\chi(\cl S_n) = \hat{\chi}(\cl S_n^{\perp}) = n$.
Here we extend these results by considering tensor products of operator systems of this type 
and identifying the values of some of the parameters introduced earlier. 
For $n_1,n_2,\ldots,n_m \in \bb{N}$, let 
$$\cl{S}_{n_1,\dots,n_m} = \cl{S}_{n_1} \otimes \cl{S}_{n_2} \otimes \dots  \otimes \cl{S}_{n_m}.$$

\begin{lemma} \label{S_nS_m}  
Let $u,v \in \mathbb{C}^{n_1n_2 \ldots n_m}$ be orthogonal vectors.

(i) \ If $uv^* \in \cl{S}_{n_1,\dots,n_m}^{\perp}$ then $uv^* = 0$;

(ii) If $uu^*, uv^* \in \cl{S}_{n_1,\dots,n_m}$ then $uv^* = 0$. 
\end{lemma}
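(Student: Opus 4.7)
The plan is to proceed by induction on $m$, exploiting the recursive structure $\cl S_{n_1,\ldots,n_m} = \cl S_{n_1} \otimes \cl R$ with $\cl R := \cl S_{n_2,\ldots,n_m}$. The starting observation is the description $\cl S_n = \mathbb{C} I_n + \cl S_n^0$, where $\cl S_n^0 := \{T \in M_n : \Delta(T) = 0\}$ denotes the zero-diagonal matrices; dually, $\cl S_n^\perp$ consists of the traceless diagonal matrices. Decomposing $u = \sum_i e_i \otimes u_i$ and $v = \sum_j e_j \otimes v_j$ with $u_i, v_j \in \mathbb{C}^{n_2 \cdots n_m}$, the operator $uv^*$ becomes the $n_1 \times n_1$ block matrix whose $(i,j)$-entry is $u_i v_j^*$, and the proof will reduce both statements to block-by-block conditions handled by the inductive hypothesis.

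The first key step will be a block-matrix characterization: for $X = (X_{ij})$ written as an $n_1 \times n_1$ block matrix with entries in $M_{n_2 \cdots n_m}$, I will show that (a) $X \in \cl S_{n_1} \otimes \cl R$ if and only if every $X_{ij} \in \cl R$ and $X_{ii}$ is independent of $i$; and (b) $X \in (\cl S_{n_1} \otimes \cl R)^\perp$ if and only if $X_{ij} \in \cl R^\perp$ for every $i \neq j$ and $\sum_i X_{ii} \in \cl R^\perp$. Both equivalences follow by pairing with the spanning family $\{I_{n_1}\} \cup \{e_ae_b^* : a \neq b\}$ of $\cl S_{n_1}$ and a direct Hilbert--Schmidt inner product computation. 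The base case $m=1$ is handled by hand: for (i), $uv^*$ is a traceless diagonal, so a support argument reduces to $u, v$ being multiples of a common basis vector $e_{i_0}$, and trace zero then forces $u_{i_0} v_{i_0}^* = 0$; for (ii), constant-diagonal $uu^*$ forces either $u = 0$ or all $|u_i|^2 = c > 0$, and the constant diagonal of $uv^*$ together with $\Tr(uv^*) = 0$ (from $u \perp v$) forces $v = 0$.

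For the inductive step of (i), the characterization applied to $uv^*$ yields $u_i v_j^* \in \cl R^\perp$ for $i \neq j$; since $I \in \cl R$, this forces $\Tr(u_i v_j^*) = 0$, i.e., $u_i \perp v_j$. The inductive hypothesis then gives $u_i v_j^* = 0$ for every pair $i \neq j$. A short support analysis shows that either $u = 0$, or $v = 0$, or there is a common index $i_0$ with $u = e_{i_0} \otimes u_{i_0}$ and $v = e_{i_0} \otimes v_{i_0}$; in the last case the diagonal condition from the characterization reduces to $u_{i_0} v_{i_0}^* \in \cl R^\perp$, orthogonality $u \perp v$ specializes to $u_{i_0} \perp v_{i_0}$, and a second application of the inductive hypothesis closes the proof.

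For the inductive step of (ii), the characterization applied to $uu^*$ forces $u_i u_i^*$ to be independent of $i$; either the common value is zero (and $u = 0$), or a rank-one argument gives $u_i = \alpha_i u_1$ with $|\alpha_i| = 1$. The constant-diagonal condition on $uv^*$ then reads $\alpha_i u_1 v_i^* = u_1 v_1^*$, which since $u_1 \neq 0$ forces $v_i = \alpha_i v_1$, so that $u = w \otimes u_1$ and $v = w \otimes v_1$ with $w := \sum_i \alpha_i e_i$. Orthogonality $u \perp v$ reduces to $u_1 \perp v_1$ (as $\|w\|^2 = n_1 \neq 0$), and the characterization further yields $u_1 u_1^*, u_1 v_1^* \in \cl R$, so the inductive hypothesis forces $u_1 v_1^* = 0$ and hence $uv^* = 0$. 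The main obstacle I anticipate is setting up the block-matrix characterization cleanly, and in particular the rank-one argument identifying $u_i$ with a unimodular multiple of $u_1$; once those are in place, both parts of the lemma follow by a direct and roughly parallel unwinding.
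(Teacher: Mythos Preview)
Your proposal is correct and follows essentially the same inductive strategy as the paper: both decompose $u,v$ into $n_1$ blocks, use the identical block characterizations of $\cl S_{n_1}\otimes\cl R$ and its orthogonal complement, and reduce to the inductive hypothesis on the blocks. Your treatment is in fact slightly more careful than the paper's in explicitly verifying the orthogonality hypothesis (via $I\in\cl R$) before invoking the inductive step.

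One difference worth noting: in part (ii) the paper normalises $u$ and proves along the way the explicit coordinate formula $|u_{i_1,\ldots,i_m}|^2 = 1/(n_1\cdots n_m)$, which it then uses in the subsequent Proposition~\ref{Snspaces}(ii) to describe $\fp(\cl S_{n_1,\ldots,n_m})$. Your tensor-factorisation argument $u = w\otimes u_1$, $v = w\otimes v_1$ is cleaner for the lemma itself but does not directly yield this byproduct, so you would need a short extra argument later.
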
   

\begin{proof}
(i) 
Suppose that $uv^* \in \cl{S}_{n_1,\dots,n_m}^{\perp}$.
Let $m = 1$ and write 
$u = (u_i)_{i=1}^{n_1}$ and $v = (v_i)_{i=1}^{n_1}$. We have that 
$u_i \bar{v}_j = 0$ whenever $i \ne j$ and 
$\sum_{i=1}^{n_1} u_i \bar{v}_i = 0$. 
These conditions easily imply that $u = 0$ or $v = 0$. 

Proceeding by induction, suppose that the statement holds for some $m$. 
Note that
$$\cl{S}_{n_1,\dots,n_{m+1}}
= \left \{ \begin{pmatrix} S & S_{1,2} & \ldots & S_{1,n_1} \\  S_{2,1}& S & \ldots & S_{2,n_1} \\ \vdots & \vdots & \ddots & \vdots \\ S_{n_1,1} & S_{n_1,2} & \ldots & S \end{pmatrix} : 
S, S_{i,j} \in  \cl{S}_{n_2,\ldots,n_{m+1}}
\right\}.$$
Thus,
 $\cl{S}_{n_1,\dots,n_{m+1}}^{\perp}$ consists of all block matrices of the form 
$$\begin{pmatrix} D_{1,1} & D_{1,2} & \ldots & D_{1,n_1} \\  D_{2,1}& D_{2,2} & \ldots & D_{2,n_1} \\ \vdots & \vdots & \ddots & \vdots \\ D_{n_1,1} & D_{n_1,2} & \ldots & D_{n_1, n_1} \end{pmatrix},$$  
where  
\begin{equation} \label{perp1}  
D_{i,j} \in  \cl{S}_{n_2,\ldots,n_{m+1}}^{\perp}, \ \ \ i\neq j,
\end{equation}  
and 
\begin{equation} \label{perp2} 
\sum_{i=1}^{n_1} D_{i,i} \in  \cl{S}_{n_2,\ldots,n_{m+1}}^{\perp}.
\end{equation}   

Write  
$u = \begin{pmatrix} u_1 \\ \vdots \\ u_{n_1} \end{pmatrix}$ and $v=\begin{pmatrix} v_1 \\ \vdots \\ v_{n_1} \end{pmatrix}$,
where $u_i, v_i \in \mathbb{C}^{n_2 \ldots n_{m+1}}$, $i\in [n_1]$.
We have $uv^*= \left( u_iv_j^* \right)_{i,j=1}^{n_1}$ with $u_iv_j^* \in M_{n_2 \ldots n_{m+1}}.$ 
Assume that $u_i \neq 0$ for some $i\in [n_1]$.  
By \eqref{perp1} and the induction assumption, $v_j = 0$ whenever $j \ne i$.  
Now, by \eqref{perp2} and the induction assumption, $v_i = 0$; thus, $v = 0$.

(ii) 
Suppose that $u$ is a unit vector such that $uu^*, uv^* \in \cl{S}_{n_1,\dots,n_m}$. 
Write 
$$u = (u_{i_1,\dots,i_m})_{(i_1,\dots,i_m) \in [n_1]\times\cdots \times [n_m]}.$$ 
We will show that 
\begin{equation}\label{eq_manyn}
|u_{i_1,\dots,i_m}|^2 = \frac{1}{n_1\dots n_m}, \ \ \  (i_1,\dots,i_m) \in [n_1]\times\cdots \times [n_m],
\end{equation}
and $v = 0$. 
Letting $m = 1$ and writing 
$u = (u_i)_{i=1}^{n_1}$ and $v = (v_i)_{i=1}^{n_1}$, we have that 
$|u_i|^2 =  \frac{1}{n_1}$ for all $i \in [n_1]$.  
In addition,
$u_i \bar{v}_i = u_j \bar{v}_j$ for all $i,j$. Since $\langle u,v\rangle = 0$, 
we have that $u_i \bar{v}_i = 0$ for all $i\in [n_1]$. This shows that $v_i = 0$ for all $i\in [n_1]$, that is, $v = 0$. 

Proceeding by induction, suppose that the statement holds for some $m$
and write 
$u = \begin{pmatrix} u_1 \\ \vdots \\ u_{n_1} \end{pmatrix}$ and $v=\begin{pmatrix} v_1 \\ \vdots \\ v_{n_1} \end{pmatrix}$,
where $u_i, v_i \in \mathbb{C}^{n_2 \ldots n_{m+1}}$, $i\in [n_1]$.
We have that $u_i u_i^* = u_j u_j^*$ for all $i,j\in [n_1]$. 
Thus, $\|u_i\|^2 = \frac{1}{n_1}$ for all $i\in [n_1]$. 
The inductive assumption implies that $|u_{i_1,\dots,i_{m+1}}|^2 = \frac{1}{n_1\dots n_{m+1}}$ for all 
$(i_1,\dots,i_{m+1}) \in [n_1]\times\cdots [n_{m+1}]$.

On the other hand, 
$u_iv_i^* = u_jv_j^*$ for all $i,j\in [n_1]$ and hence $\langle u_i, v_i\rangle = 0$ for all $i \in [n_1]$. 
By the inductive assumption, $v_i = 0$ for all $i\in [n_1]$.
\end {proof}

\begin{proposition} \label{Snspaces}
Let $n_1,\dots,n_m\in \bb{N}$. Then

\begin{itemize}
\item[(i)] $\ap(\cl{S}_{n_1,n_2, \ldots, n_m}) = \{ A \in M_{n_1 n_2 \ldots n_m}: A \ge 0, \Tr(A) \le 1 \}$;

\item[(ii)] $\fp(\cl{S}_{n_1,n_2, \ldots, n_m}) = 
{\rm \her}( \overline{\conv}(\{uu^* :  u\in \bb{C}^{n_1 n_2 \ldots n_m}
\mbox{ satisfies } (\ref{eq_manyn})\}))$;

\item[(iii)] $\alpha (\cl{S}_{n_1,n_2 ,\ldots, n_m})=1$;

%\item[(iv)] $c(\cl{S}_{n_1,n_2 ,\ldots, n_m}) =1$;

\item[(iv)] $\omega_f(\cl{S}_{n_1,n_2, \ldots, n_m}) = \chi(\cl{S}_{n_1,n_2, \ldots, n_m}) = n_1 \ldots n_m$;

\item[(v)] $\tilde{\omega}(\cl S_{n_1,n_2, \ldots, n_m}) = 1$;

\item[(vi)] $\nph(\cl S_{n_1,n_2, \ldots, n_m}) = n_1\dots n_m$; 

\item[(vii)] $\omega(\cl S_{n_1,n_2, \ldots, n_m}) \geq \min\{n_1,\dots,n_m\}$ and $\omega(\cl S_{n_1}) = n_1$. 
\end{itemize}
\end{proposition}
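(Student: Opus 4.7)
The plan is to reduce every statement to the rank-one level via Lemma~\ref{S_nS_m} and then extract the numerical claims by elementary convex analysis. By Lemma~\ref{S_nS_m}(i) applied to any pair of orthogonal unit vectors, no $\cl S_{n_1,\ldots,n_m}$-independent set can contain more than one vector; hence, via Proposition~\ref{p_int}(i), every $\cl S$-abelian projection has rank at most one. Since rank-at-most-one projections are trivially abelian, $\cl P_{\rm a}(\cl S) = \{0\}\cup\{uu^*:\|u\|=1\}$, whose hereditary closed convex hull (via spectral decomposition) is $\{A\geq 0:\Tr(A)\leq 1\}$, giving (i); part (iii) is then immediate. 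A parallel argument using Lemma~\ref{S_nS_m}(ii)---applied with $u,v$ distinct orthonormal vectors in the range $PH$ of an $\cl S$-full projection $P$, noting that $uu^*\in\cl L(PH)\subseteq\cl S$---shows that $\cl S$-full projections also have rank at most one. The proof of that same lemma further shows that a unit vector $u$ with $uu^*\in\cl S$ is forced to satisfy (\ref{eq_manyn}), yielding (ii), after which (v) is immediate since every non-zero extreme $\cl S$-full projection has trace one.

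For (iv), $A\in\ap(\cl S)^{\sharp}$ iff $\langle Av,v\rangle\leq 1$ for every unit $v$, i.e.\ $A\leq I$, so $\omega_{\rm f}(\cl S)=\Tr(I)=n_1\cdots n_m$. For $\chi(\cl S)$, any decomposition $I=\sum_\ell P_\ell$ into rank-at-most-one projections requires at least $n_1\cdots n_m$ summands, and the standard-basis decomposition realises this value. For (vi), $I\in\fp(\cl S)^{\sharp}$ (since $\langle Iu,u\rangle=1$ for every unit $u$) gives $\nph(\cl S)\geq n_1\cdots n_m$. For the reverse, I would average over random signs: let $\epsilon\colon [n_1]\times\cdots\times[n_m]\to\{\pm 1\}$ be uniformly distributed and set $u_\epsilon=(n_1\cdots n_m)^{-1/2}\sum_{\vec i}\epsilon(\vec i)e_{\vec i}$. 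Each $u_\epsilon$ satisfies (\ref{eq_manyn}) and $\bb{E}_\epsilon[u_\epsilon u_\epsilon^*]=(n_1\cdots n_m)^{-1}I$, so taking expectation in $\Tr(Au_\epsilon u_\epsilon^*)\leq 1$ yields $\Tr(A)\leq n_1\cdots n_m$.

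Finally, for (vii), $\omega(\cl S_{n_1})\leq n_1$ is just the ambient dimension, while the standard basis $\{e_1,\ldots,e_{n_1}\}$ is an $\cl S_{n_1}$-clique of this size. For the general inequality, fix $j\in[m]$ and unit vectors $w_k\in\bb{C}^{n_k}$ with $w_kw_k^*\in\cl S_{n_k}$ for $k\neq j$ (e.g.\ the normalised all-ones vector, as in our characterisation of rank-one full projections for $\cl S_{n_k}$); then
$$\xi_i:=w_1\otimes\cdots\otimes w_{j-1}\otimes e_i^{(j)}\otimes w_{j+1}\otimes\cdots\otimes w_m,\quad 1\leq i\leq n_j,$$
forms an $\cl S_{n_1,\ldots,n_m}$-clique, giving $\omega\geq n_j\geq \min_i n_i$. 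The only step that requires anything beyond bookkeeping is the random-sign averaging for (vi); everything else follows directly from Lemma~\ref{S_nS_m} and spectral decomposition.
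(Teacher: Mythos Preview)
Your arguments for (i)--(v) and (vii) are correct and essentially match the paper's; your clique in (vii) (standard basis in one tensor factor, fixed ``flat'' vectors $w_k$ in the others) is a minor variant of the paper's construction via injections $f_k:[n_1]\to[n_k]$.

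The genuine gap is in (vi). Your averaging argument needs $u_\epsilon u_\epsilon^*\in\fp(\cl S_{n_1,\ldots,n_m})$ for every sign pattern $\epsilon$, which you justify via (ii). But neither you nor the paper actually proves the $\supseteq$ direction of (ii): Lemma~\ref{S_nS_m}(ii) only shows that $uu^*\in\cl S$ forces (\ref{eq_manyn}), not the converse. And the converse is false for $m\geq 2$. With $n_1=n_2=2$ the sign vector $u=\tfrac12(1,1,1,-1)$ satisfies (\ref{eq_manyn}), yet the two diagonal $2\times 2$ blocks of $uu^*$ are $\tfrac14\bigl(\begin{smallmatrix}1&1\\1&1\end{smallmatrix}\bigr)$ and $\tfrac14\bigl(\begin{smallmatrix}1&-1\\-1&1\end{smallmatrix}\bigr)$, which differ, whereas membership in $\cl S_2\otimes\cl S_2$ forces equal diagonal blocks (see the block description in the proof of Lemma~\ref{S_nS_m}). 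So $u_\epsilon u_\epsilon^*$ need not lie in $\fp(\cl S)$, and you cannot conclude $\Tr(Au_\epsilon u_\epsilon^*)\leq 1$ from $A\in\fp(\cl S)^\sharp$.

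The repair is immediate: restrict to \emph{product} signs $\epsilon(\vec i)=\epsilon_1(i_1)\cdots\epsilon_m(i_m)$. Then $u_\epsilon=u_{\epsilon_1}\otimes\cdots\otimes u_{\epsilon_m}$ is an elementary tensor, each $u_{\epsilon_j}u_{\epsilon_j}^*$ has constant diagonal and hence lies in $\cl S_{n_j}$, so $u_\epsilon u_\epsilon^*\in\cl S_{n_1}\otimes\cdots\otimes\cl S_{n_m}$ is genuinely $\cl S$-full; independent averaging over $\epsilon_1,\ldots,\epsilon_m$ still gives $\frac{1}{d}I$. The paper's proof of (vi) uses instead the Fourier vectors $u_k=\frac{1}{\sqrt d}(\zeta^{ki})_i$, which under the natural identification $[n_1]\times\cdots\times[n_m]\cong[d]$ are likewise elementary tensors, so its $\frac{1}{d}I\in\fp(\cl S)$ is valid even though its pointer ``by the proof of Lemma~\ref{S_nS_m}'' is no more precise than yours for (ii).
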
 

\begin{proof} 
(i) and (ii) are immediate from Lemma \ref{S_nS_m}, (iii) is immediate from (i), and (v) from (ii). 

(iv) By (i), $I\in \ap(\cl{S}_{n_1,n_2, \ldots, n_m})^{\sharp}$, and hence $\omega_{\rm f} (\cl{S}_{n_1,n_2, \ldots, n_m}) = n_1 \ldots n_m$. 
The fact that $\chi(\cl{S}_{n_1,n_2, \ldots, n_m}) = n_1 \ldots n_m$ follows from Corollary \ref{c_ak}
and the fact that the value of $\chi(\cl S)$ does not exceed the dimension on which $\cl S$ acts.

(vi)
Set $d = n_1\ldots n_m$ for brevity.
By (ii), $I\in \fp(\cl S_{n_1,\ldots, n_m})^{\sharp}$; thus, $\nph(\cl S_{n_1,\ldots, n_m}) \geq d$. 
%Suppose that $u\in \bb{C}^d$ is a unit vector such that $uu^*\in \fp(\cl S_{n_1,\ldots, n_m}). 
Fix a primitive $d$-th root of unity $\zeta$. 
For $k \in [d]$, let $u_k = \frac{1}{\sqrt{d}} \left(\zeta^{ki}\right)_{i=1}^d$; thus, $u_k$ is a unit vector in $\bb{C}^d$. 
By the proof of Lemma \ref{S_nS_m}, $u_k u_k^*\in \fp(\cl S_{n_1,\ldots,n_m})$. 
Thus, 
$$\frac{1}{d}  I_d = \frac{1}{d} \sum_{k=1}^d u_k u_k^* \in \fp(\cl S_{n_1,\ldots,n_m}).$$
It follows that if $T\in \fp(\cl S_{n_1,\ldots, n_m})^{\sharp}$ then $\Tr(T)\leq d$, and hence 
$\nph(\cl S_{n_1,\ldots, n_m}) = d$.

(vii) Assume, without loss of generality, that $n_1 = \min\{n_1,\dots,n_m\}$
and let $f_k : [n_1]\to [n_k]$ be an injective map, $k = 2,\dots,m$.
Then $\{e_{i_1, f_2(i_1),\dots,f_{m}(i_1)}$ $: i_1\in [n_1]\}$ is an $\cl S_{n_1,\dots,n_m}$-clique
and hence $\omega(\cl S_{n_1,n_2, \ldots, n_m}) \geq n_1$. 
On the other hand, $\omega(\cl S_{n_1})\leq n_1$ and the proof is complete. 
\end{proof}

%%%%%%%%%%%%%%%%%%%%%%%%%%%%%%%%%%%%%%%%%%%%%%%%%%%%%%%
%%%%%%%%%%%%%.      FIRST SANDWICH.       %%%%%%%%%%%%%%%%%%%%%%%%%%%%
%%%%%%%%%%%%%%%%%%%%%%%%%%%%%%%%%%%%%%%%%%%%%%%%%%%%%%%%

\subsection{Inclusions between convex corners}\label{ss_icc}

We now prove our first sandwich theorem and list some of its consequences.

\begin{theorem}\label{th_apcp}
Let $\cl S$ be a non-commutative graph. Then 
$$\ap(\cl S) \subseteq \cp(\cl S)^{\sharp} \subseteq \fp(\cl S)^{\sharp}.$$
\end{theorem}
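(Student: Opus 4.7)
The plan is to dispatch the second inclusion first. By Remark \ref{r_cpfpi}(ii), every $\cl S$-full projection is $\cl S$-clique, so $\fp(\cl S)\subseteq \cp(\cl S)$, and taking anti-blockers reverses inclusions directly from the definition of $(\cdot)^{\sharp}$. This gives $\cp(\cl S)^{\sharp}\subseteq \fp(\cl S)^{\sharp}$.

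The substantive step is $\ap(\cl S)\subseteq \cp(\cl S)^{\sharp}$, that is, $\Tr(AB)\leq 1$ whenever $A\in \ap(\cl S)$ and $B\in \cp(\cl S)$. I will exploit the fact that the trace pairing is monotone in each variable on positive operators: if $0\leq A\leq T$ and $B\geq 0$ then $\Tr(AB)\leq \Tr(TB)$. Combined with linearity and continuity of the trace, this reduces, via the hereditary--convex--closure structure of both $\ap(\cl S)$ and $\cp(\cl S)$ (as recorded in Proposition \ref{l_gen}(i)), to verifying $\Tr(PQ)\leq 1$ for an arbitrary $\cl S$-abelian projection $P$ and $\cl S$-clique projection $Q$.

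Fix such $P$ and $Q$. By Proposition \ref{p_int}(i) and the definition of $\cl S$-clique projection, choose an $\cl S$-independent orthonormal basis $\{\xi_i\}_{i=1}^m$ of $PH$ and an $\cl S$-clique orthonormal basis $\{\eta_a\}_{a=1}^k$ of $QH$, and set $M_{i,a}=\langle \xi_i,\eta_a\rangle$. A direct computation gives
$$\Tr(PQ) \;=\; \sum_{i=1}^m \|Q\xi_i\|^2 \;=\; \sum_{i,a}|M_{i,a}|^2.$$
The crucial input is the Hilbert--Schmidt pairing of $\eta_a\eta_b^*\in \cl S$ (for $a\neq b$) against $\xi_i\xi_j^*\in \cl S^{\perp}$ (for $i\neq j$); by cyclicity of the trace,
$$0 \;=\; \Tr\bigl(\eta_a\eta_b^*\xi_i\xi_j^*\bigr) \;=\; \langle \eta_a,\xi_j\rangle\langle \xi_i,\eta_b\rangle \;=\; M_{i,b}\overline{M_{j,a}}.$$

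From this identity I will deduce that all nonzero entries of $M$ lie in a single row or a single column. Indeed, fixing any nonzero $M_{i_0,a_0}$, every other nonzero $M_{i,a}$ must satisfy $i=i_0$ or $a=a_0$; and if both a nonzero entry strictly off row $i_0$ and one strictly off column $a_0$ were to exist, applying the identity to that pair would force one of them to vanish, a contradiction. In the one-row case $\sum_{i,a}|M_{i,a}|^2=\|Q\xi_{i_0}\|^2\leq 1$, and symmetrically in the one-column case it equals $\|P\eta_{a_0}\|^2\leq 1$. I expect the reductions through hereditarity and convexity to be routine; the main obstacle is the short combinatorial argument pinning the support of $M$ to a single row or column, which is what genuinely uses both the $\cl S$-independent and the $\cl S$-clique hypotheses.
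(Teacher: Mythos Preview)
Your proof is correct and follows essentially the same approach as the paper: reduce via Proposition \ref{l_gen}(i) to $\Tr(PQ)\leq 1$ for an $\cl S$-abelian $P$ and $\cl S$-clique $Q$, extract the orthogonality relation from pairing $\cl S$ against $\cl S^{\perp}$, and then argue combinatorially that the nonzero inner products are confined to a single row or single column. Your matrix bookkeeping with $M_{i,a}=\langle\xi_i,\eta_a\rangle$ is a clean repackaging of the paper's case analysis via the sets $\beta(i)=\{p:\eta_p\perp\xi_i\}$, but the underlying argument is the same.
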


\begin{proof}
The second inclusion follows from Remark \ref{r_cpfpi} (ii). 
Let $\{\xi_i\}_{i=1}^k$ (resp. 
$\{\eta_p\}_{p=1}^m$) be an $\cl S$-independent set 
(resp. an $\cl S$-clique) and $P$ (resp. $Q$) be the projection onto its span.
It suffices to show that $\Tr(PQ) \leq 1$.
We have that 
$$\xi_i\xi_j^* \perp \eta_p\eta_q^* \ \mbox{ whenever } i\neq j \mbox{ and } p\neq q.$$
Thus, 
\begin{equation}\label{eq_ijpq}
i\neq j, \ p\neq q \ \Longrightarrow \ \xi_i\perp \eta_p \mbox{ or } \xi_j\perp \eta_q.
\end{equation}

For each $i \in [k]$, let 
$$\beta(i) = \{p\in [m] : \eta_p\perp \xi_i\}$$
and 
$$\alpha = \{i\in [k] : \beta(i)^c\neq \emptyset\}.$$
We distinguish three cases:

\smallskip

\noindent {\it Case 1.} $\alpha = \emptyset$. 
Then $\beta(i) = [m]$ for every $i\in [k]$. It follows that $Q\perp P$ and hence $\Tr(PQ) = 0 \leq 1$. 

\smallskip

\noindent {\it Case 2.} $|\alpha| = 1$, say $\alpha = \{i_0\}$. 
Then 
\begin{eqnarray*}
\Tr(PQ) 
& = & 
\sum_{i=1}^k \sum_{p=1}^m \Tr((\xi_i\xi_i^*)(\eta_p\eta_p^*)) 
= \sum_{i=1}^k \sum_{p=1}^m |\langle \xi_i,\eta_p\rangle|^2\\
& = & 
\sum_{i=1}^k \sum_{p\not\in \beta(i)} |\langle \xi_i,\eta_p\rangle|^2
= \sum_{p\not\in \beta(i_0)} |\langle \xi_{i_0},\eta_p\rangle|^2 \leq 1,
\end{eqnarray*}
because the family $\{\eta_p\}_{p=1}^m$ is orthonormal and $\|\xi_{i_0}\| = 1$.

\smallskip

\noindent {\it Case 3.} $|\alpha| > 1$.
By (\ref{eq_ijpq}), for every pair $(i,j)\in [k]\times [k]$ with $i\neq j$, we have 
$$\{(p,q) \in [m]\times [m] : p\neq q\} \subseteq \left(\beta(i)\times [m]\right)\cup \left([m]\times \beta(j)\right),$$
and hence 
\begin{equation}\label{eq_ijp}
\beta(i)^c \times \beta(j)^c \subseteq \{(p,p) : p \in [m]\} \ \mbox{ whenever } i\neq j. 
\end{equation}

Suppose that 
$|\beta(i)^c \times \beta(j)^c| > 1$ for some $i,j$ with $i\neq j$.
Then there are $p,p'$ such that $p\neq p'$ and 
$$p,p'\in \beta(i)^c, \ \ p,p'\in \beta(j)^c,$$
contradicting (\ref{eq_ijp}).
Thus, $|\beta(i)^c \times \beta(j)^c| \leq 1$ for all pairs $(i,j)$ with $i\neq j$. 
It follows that $|\beta(i)^c| = 1$ for every $i\in \alpha$. 
Write $\beta(i)^c = \{p_i\}$, $i\in \alpha$. 
Then $(p_i,p_j)\in \beta(i)^c \times \beta(j)^c$ for all $i,j\in \alpha$ with $i\neq j$. 
In view of (\ref{eq_ijp}), $p_i = p_j$ for all $i,j\in \alpha$.
Let $p_0$ be the common value of $p_i$, $i\in \alpha$;
then 
$$\Tr(PQ) = \sum_{i\in \alpha} |\langle \xi_i,\eta_{p_0}\rangle|^2\leq 1,$$
because the family $\{\xi_i\}_{i=1}^k$ is orthonormal and $\|\eta_{p_0}\| = 1$.
\end{proof}

\begin{corollary}\label{c_ak}
Let $\cl S$ be a non-commutative graph. Then 
\begin{equation}\label{eq_Sc}
\alpha(\cl S) = \omega(\cl S^c) \leq \kappa(\cl S) = \omega_{\rm f}(\cl S^c) \leq \nph(\cl S)
\leq \hat{\chi}(\cl S^{\perp}),
\end{equation} 
\begin{equation}\label{eq_Sc2}
\tilde{\omega}(\cl S) \leq \omega(\cl S) \leq \omega_{\rm f}(\cl S)\leq \chi(\cl S) \leq \hat{\chi}\left(\cl S\cap \{I\}^{\perp}\right)
\end{equation} 
and 
\begin{equation}\label{eq_Sc3}
\kappa(\cl S) \leq \chi(\cl S^c) \leq \hat{\chi}(\cl S^{\perp}).
\end{equation} 
\end{corollary}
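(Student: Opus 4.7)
The plan is to reduce every inequality in (\ref{eq_Sc})--(\ref{eq_Sc3}) to one of three mechanisms: (a) $\theta$-monotonicity applied to a convex-corner inclusion, (b) the complement identity $\ap(\cl S) = \cp(\cl S^c)$ from Proposition \ref{p_coic} combined with $\cl S^{cc} = \cl S$, or (c) a trace summation over a partition of $I$ into projections of the appropriate type. I expect almost everything to fall out mechanically from Theorem \ref{th_apcp} together with Proposition \ref{p_coic}; the only real technical point should be the two bounds of the form $\chi(\cdot)\leq \hat{\chi}(\cdot)$.

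First I would dispose of (\ref{eq_Sc}). Applying $\theta$ to Proposition \ref{p_coic} gives $\alpha(\cl S) = \omega(\cl S^c)$; applying the same to $\cl S^c$ in place of $\cl S$ (and using $\cl S^{cc} = \cl S$) yields $\omega_{\rm f}(\cl S^c) = \kappa(\cl S)$; and the inclusions $\ap(\cl S)\subseteq \cp(\cl S)^{\sharp} \subseteq \fp(\cl S)^{\sharp}$ from Theorem \ref{th_apcp} immediately give $\alpha(\cl S)\leq \kappa(\cl S)\leq \nph(\cl S)$ after taking $\theta$. For (\ref{eq_Sc2}), the bound $\tilde{\omega}(\cl S)\leq \omega(\cl S)$ is $\theta$ of the inclusion $\fp(\cl S)\subseteq \cp(\cl S)$ from Remark \ref{r_cpfpi}(ii), while $\omega(\cl S)\leq \omega_{\rm f}(\cl S)$ comes from Theorem \ref{th_apcp} applied to $\cl S^c$ and rewritten via Proposition \ref{p_coic} as $\cp(\cl S)\subseteq \ap(\cl S)^{\sharp}$.

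The chromatic-number inequalities come from a standard trace-summation trick. If $\chi(\cl S) = k$ with $\cl S$-abelian projections $P_1,\dots,P_k$ summing to $I$, then for any $T\in \ap(\cl S)^{\sharp}$ one has $\Tr(T) = \sum_{j=1}^{k} \Tr(TP_j)\leq k$, giving $\omega_{\rm f}(\cl S)\leq \chi(\cl S)$; the same argument with $\cl S$-full projections arising from the strong independent sets for $\cl S^{\perp}$ yields $\nph(\cl S)\leq \hat{\chi}(\cl S^{\perp})$; and $\kappa(\cl S)\leq \chi(\cl S^c)$ is the first of these bounds applied to $\cl S^c$ together with the already-established equality $\kappa(\cl S) = \omega_{\rm f}(\cl S^c)$.

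The main technical step is then the bound $\chi(\cl S) \leq \hat{\chi}(\cl S\cap \{I\}^{\perp})$, whereupon the last inequality $\chi(\cl S^c)\leq \hat{\chi}(\cl S^{\perp})$ will follow by applying it to $\cl S^c$ and observing that $\cl S^c\cap\{I\}^{\perp} = \cl S^{\perp}$; this latter identity holds because $I\in \cl S$ forces $\cl S^{\perp}\subseteq\{I\}^{\perp}$, while any $T \in \cl S^c\cap\{I\}^{\perp}$ splits as $T = A + \mu I$ with $A\in \cl S^{\perp}$ and $\mu = 0$ by a trace computation. To prove the main bound, given a strong independent set $\{v_i\}$ for $\cl J := \cl S\cap\{I\}^{\perp}$, I would upgrade it to an $\cl S$-independent set: for $i\neq j$, decomposing an arbitrary $S\in \cl S$ as $S_0 + (\Tr(S)/d) I$ with $S_0\in \cl J$, and using that $\langle v_iv_j^*, I\rangle = \langle v_j, v_i\rangle = 0$, shows $v_iv_j^*\perp \cl S$. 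Proposition \ref{p_int}(i) then promotes the span-projection of $\{v_i\}$ to an $\cl S$-abelian projection, and an orthonormal basis of $H$ partitioned into $k$ strong independent sets yields $k$ $\cl S$-abelian projections summing to $I$, giving $\chi(\cl S)\leq k$ as required.
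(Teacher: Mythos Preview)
Your proposal is correct and follows essentially the same approach as the paper. The only organisational differences are that the paper obtains $\omega(\cl S)\leq \omega_{\rm f}(\cl S)$ via $\cp(\cl S)\subseteq \cp(\cl S)^{\sharp\sharp}\subseteq \ap(\cl S)^{\sharp}$ rather than by substituting $\cl S^c$ into Theorem~\ref{th_apcp}, and it derives the two $\chi\leq\hat{\chi}$ bounds in the opposite order (first proving $\chi(\cl S^c)\leq \hat{\chi}(\cl S^{\perp})$ from the chain $\cl S$-full $\Rightarrow$ $\cl S$-clique $\Rightarrow$ $\cl S^c$-abelian, then substituting $\cl S^c$ for $\cl S$); your direct trace computation is just an unwinding of that chain.
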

\begin{proof}
By Proposition \ref{p_coic}, $\ap(\cl S) = \cp(\cl S^c)$ and hence $\alpha(\cl S) = \omega(\cl S^c)$
and $\omega_{\rm f}(\cl S^c) = \kappa(\cl S)$.
By Theorem \ref{th_apcp}, $\alpha(\cl S)\leq \kappa(\cl S) \leq \nph(\cl S)$.
Suppose that there exist $\cl S$-full projections $P_1,\dots,P_k$ with $I = \sum_{i=1}^k P_i$.
If $T\in \fp(\cl S)^{\sharp}$ then 
$$\Tr(T) = \sum_{i=1}^k \Tr(TP_i) \leq k;$$
thus, $\nph(\cl S) \leq \hat{\chi}(\cl S^{\perp})$. 

By Remark \ref{r_cpfpi} (ii), $\tilde{\omega}(\cl S) \leq \omega(\cl S)$. 
By Theorem \ref{th_apcp}, $\cp(\cl S)\subseteq \cp(\cl S)^{\sharp\sharp}\subseteq \ap(\cl S)^{\sharp}$ and 
hence $\omega(\cl S) \leq  \omega_{\rm f}(\cl S)$. 
An argument, similar to one in the previous paragraph shows that $\omega_{\rm f}(\cl S)\leq \chi(\cl S)$.
Replacing $\cl S$ with $\cl S^c$, we get $\kappa(\cl S)\leq \chi(\cl S^c)$. 

By Proposition \ref{p_coic} and 
Remark (ii) after Definition \ref{d_ab}, $\chi(\cl S^c)\leq \hat{\chi}(\cl S^{\perp})$. Replacing $\cl S$ with $\cl S^c$ 
and using Remark \ref{r_cc} we have
$\chi(\cl S) \leq \hat{\chi}\left(\cl S\cap \{I\}^{\perp}\right)$, and the proof is complete. 
\end{proof}

\noindent {\bf Remarks. (i) } 
By Corollary \ref{c_cnoncp} and Remark \ref{r_cnoncp}, 
the first and the last inequalities in (\ref{eq_Sc}), 
the second and the third inequalities in (\ref{eq_Sc2}),
and the first inequality in (\ref{eq_Sc3}) 
can be strict even in the case where $\cl S$ is a graph operator system. 
Let $\cl T$ be a non-commutative graph for which $\fp(\cl T) = \{0\}$ (see 
Remark \ref{r_cpfpi} (i)).
We have that $\nph(\cl T) = \hat{\chi}(\cl T^{\perp}) = \infty$ and $\tilde{\omega}(\cl T) = 0$; 
thus, by Remark \ref{r_cpfpi} (i), the middle inequality in (\ref{eq_Sc}), the first inequality in (\ref{eq_Sc2})
and the second inequality in (\ref{eq_Sc3}) can be strict.
In addition, $\hat{\chi}(\cl T^c\cap\{I\}^{\perp}) = \infty$, 
and hence the last inequality in (\ref{eq_Sc2}) can be strict. 

\smallskip

{\bf (ii) }
The inclusions in Theorem \ref{th_apcp} can be strict. 
For the first inclusion this follows for instance from 
the fact that $\alpha(\cl S)$ can be strictly smaller than $\kappa(\cl S)$ 
(see (i)). 
For the second inclusion, this follows from Remark \ref{r_cpfpi} and the fact that, for a convex corner 
$\cl A$, we have the identity $\cl A = \cl A^{\sharp\sharp}$.  
This non-trivial fact will be established in subsequent work \cite{btw2}. 

\smallskip

{\bf (iii) }
In view of the proof of Corollary \ref{c_ak}, 
the parameter $\nph(\cl S)$ can be thought of as a fractional version of the 
strong chromatic number $\hat{\chi}(\cl S^{\perp})$.

%%%%%%%%%%%%%%%%%%%%%%%%%%%%%%%%%%%%%%%%%%%%%%%%%%%%
%%%%%%%%%%%%%.    THE LOVASZ CORNER.      %%%%%%%%%%%%%%%%%%%%%%%
%%%%%%%%%%%%%%%%%%%%%%%%%%%%%%%%%%%%%%%%%%%%%%%%%%%%

\section{The Lov\'asz corner}\label{s_lc}

Let $G = (X,E)$ be a graph on $d$ vertices. 
A family $(a_x)_{x\in X}$ of unit vectors in 
a finite dimensional complex Hilbert space is called an \emph{orthogonal labelling (o.l.)} of $G$ 
if 
$$x\not\simeq y \ \Rightarrow \ a_x \perp a_y.$$
Let 
$$\cl P_0(G) = \left\{\left(|\langle a_x,c\rangle|^2\right)_{x\in X} : (a_x)_{x\in X} \mbox{ is an o.l. of } G \mbox{ and } \|c\|\leq 1\right\},$$
viewed as a subset of $\cl D_X$.
Let
$\thab(G) = \cl P_0(G)^{\flat},$
and 
$$\theta(G) = \theta(\thab(G)) = \max\left\{\Tr(A) : A\in \thab(G)\right\}$$
be the \emph{Lov\'asz number} of $G$ \cite{lo}. 
We note that Lov\'{a}sz worked with real Hilbert spaces, but inspection of the proofs shows that the 
results in \cite{gls, knuth, lo} are true for complex Hilbert spaces as well.

In its strong form \cite{gls} (see also \cite{knuth}), the Sandwich Theorem states that 
\begin{equation}\label{eq_lst}
\vp(G) \subseteq \thab(G) \subseteq \fvp(G).
\end{equation}
The aim of this section is to introduce a non-commutative version of $\thab(G)$
and to establish a chain of inclusions, analogous to (\ref{eq_lst}).

\subsection{Definition and consistency}\label{ss_nlc}

Let $H$ and $K$ be finite dimensional Hilbert spaces 
and 
$\Phi : \cl L(H)\to \cl L(K)$ be a completely positive map. 
Then there exist operators 
$A_p : H\rightarrow K$, $p = 1,\dots,m$, such that
\begin{equation}\label{eq_kr}
\Phi(T) = \sum_{p=1}^m A_p T A_p^*, \ \ \ T\in \cl L(H);
\end{equation}
the form (\ref{eq_kr}) is called a \emph{Kraus representation} of $\Phi$, and $A_p$, $p = 1,\dots,m$
-- \emph{Kraus operators} of the representation. 
The operator system
$$\cl S_{\Phi} = {\rm span}\left\{A_p^*A_q : p,q\in [m]\right\}$$ 
is called the {\it non-commutative graph of} $\Phi$, and can be shown to be independent of
the Kraus representation (\ref{eq_kr}) of $\Phi$ (see \cite{dsw} and \cite[Corollary 2.23]{watrous}). 
We let
$\Phi^* : \cl L(K)\to \cl L(H)$ be the adjoint of $\Phi$, that is, the linear map given by 
$$\langle T,\Phi^*(S)\rangle = \langle \Phi(T),S\rangle, \ \ \ T\in \cl L(H), S\in \cl L(K).$$
The completely positive map $\Phi$ is called a {\it quantum channel} (\emph{q.c.}) 
if it is trace preserving; this is equivalent to the condition 
$\sum_{p=1}^m A_p^* A_p = I$. 
 
Let $\cl S\subseteq \cl L(H)$ be an operator system and 
$$\frak{C}(\cl S) = 
\left\{\Phi : \cl L(H) \to M_k \ : \ \Phi \mbox{ is a quantum channel with } \cl S_{\Phi} \subseteq \cl S, \ k\in \bb{N}\right\}.$$
Set 
$$\thet(\cl S) = \left\{T\in \cl L(H)^+ : \Phi(T)\leq I \mbox{ for every } \Phi\in \frak{C}(\cl S)\right\}$$
and
$$\frak{P}(\cl S) = \left\{\Phi^*(\sigma) : \Phi\in \frak{C}(\cl S), \sigma\geq 0, \Tr(\sigma)\leq 1\right\}.$$

\begin{lemma}\label{l_dualn}
Let $H$ be a finite dimensional Hilbert space and $\cl S\subseteq \cl L(H)$ be an operator system.
Then $\thet(\cl S)$ is a convex corner and 
$\thet(\cl S) = \frak{P}(\cl S)^{\sharp}$.
\end{lemma}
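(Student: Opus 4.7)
The plan is to show the equality $\thet(\cl S) = \frak{P}(\cl S)^{\sharp}$ by a short trace-duality calculation; the convex corner assertion will then follow automatically from Remark \ref{r_trcc}(i), since the anti-blocker of any non-empty subset of $\cl L(H)^+$ is a convex corner.

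The key observation is that for a positive operator $X\in M_k^+$, the operator inequality $X\leq I_k$ is equivalent to $\Tr(X\sigma)\leq 1$ for every $\sigma\in M_k^+$ with $\Tr(\sigma)\leq 1$. Indeed, the forward direction is immediate from positivity of $\sigma$; for the converse, test against $\sigma = \xi\xi^*$ where $\xi$ is a unit eigenvector for the largest eigenvalue of $X$, to obtain $\|X\|\leq 1$ and hence $X\leq I_k$. Applying this with $X = \Phi(T)$, which is positive whenever $T\geq 0$ and $\Phi$ is completely positive, we see that $T\in \thet(\cl S)$ if and only if $\Tr(\Phi(T)\sigma)\leq 1$ for all $\Phi\in \frak{C}(\cl S)$ and all $\sigma\geq 0$ with $\Tr(\sigma)\leq 1$.

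Next I would invoke the defining property of the adjoint, $\Tr(\Phi(T)\sigma) = \Tr(T\Phi^*(\sigma))$, to rewrite the previous condition as
\[
\Tr(T R)\leq 1 \quad \text{for every } R = \Phi^*(\sigma)\in \frak{P}(\cl S),
\]
together with the requirement $T\geq 0$. This is exactly the condition $T\in \frak{P}(\cl S)^{\sharp}$, and the equality $\thet(\cl S) = \frak{P}(\cl S)^{\sharp}$ follows.

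Finally, to apply Remark \ref{r_trcc}(i) and conclude that $\thet(\cl S)$ is a convex corner, I would note that $\frak{P}(\cl S)$ is a non-empty subset of $\cl L(H)^+$: it is non-empty because one may take the identity channel $\id : \cl L(H)\to \cl L(H)$, which has a single Kraus operator $I$, giving $\cl S_{\id} = \bb{C}I\subseteq \cl S$, hence $\id\in \frak{C}(\cl S)$; and it is contained in $\cl L(H)^+$ because the adjoint of a completely positive map is again completely positive, so $\Phi^*(\sigma)\geq 0$ whenever $\sigma\geq 0$. There is no genuine obstacle here—the only mild point is the eigenvector test for the operator inequality $X\leq I_k$, which is standard.
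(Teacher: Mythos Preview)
Your proof is correct and follows essentially the same route as the paper: both reduce the operator inequality $\Phi(T)\leq I$ to the trace condition $\Tr(\Phi(T)\sigma)\leq 1$ over sub-normalised states, then use the adjoint identity and invoke Remark~\ref{r_trcc}(i). You are slightly more explicit in justifying the equivalence $X\leq I \Leftrightarrow \Tr(X\sigma)\leq 1$ and in verifying that $\frak{P}(\cl S)$ is non-empty, but the argument is the same.
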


\begin{proof}
For $T\in \cl L(H)^+$, we have 
\begin{eqnarray*}
T\in \thet(\cl S) 
& \Leftrightarrow & 
\Phi(T)\leq I \mbox{ for all } \Phi \in \frak{C}(\cl S)\\
& \Leftrightarrow & 
\|\Phi(T)\|\leq 1 \mbox{ for all } \Phi \in \frak{C}(\cl S)\\
& \Leftrightarrow & 
\langle\Phi(T), \sigma\rangle \leq 1 \mbox{ for all } \Phi \in \frak{C}(\cl S)  \mbox{ and all } \sigma\geq 0, \Tr(\sigma) \leq 1  \\
& \Leftrightarrow & 
\langle T, \Phi^*(\sigma)\rangle \leq 1 \mbox{ for all } \Phi \in \frak{C}(\cl S)  \mbox{ and all } \sigma\geq 0, \Tr(\sigma) \leq 1 \\
& \Leftrightarrow & 
T\in \frak{P}(\cl S)^{\sharp}.
\end{eqnarray*}
Thus, $\thet(\cl S) = \frak{P}(\cl S)^{\sharp}$ and hence, by Remark \ref{r_trcc} (i), 
$\thet(\cl S)$ is a convex corner.
\end{proof}

For a non-commutative graph $\cl S$, we set
$$\theta(\cl S) = \theta(\thet(\cl S)),$$
and call $\theta(\cl S)$ the \emph{Lov\'{a}sz number} of $\cl S$.

Let $G = (X,E)$ be a graph. 
We will call a family $(P_x)_{x\in X}$ of projections acting on a finite dimensional Hilbert space 
a \emph{projective orthogonal labelling (p.o.l.)} of $G$ if 
$$x\not\simeq y \ \Rightarrow \ P_x P_y = 0.$$
Let 
$$\cl P(G) = \left\{\left(\Tr(P_x\rho)\right)_{x\in X} : (P_x)_{x\in X} \mbox{ is a p.o.l. of } G \mbox{ and } \rho \mbox{ is a state}\right\}.$$
Note that, if $(a_x)_{x\in X}$ is an orthogonal labelling of $G$ then 
the family $(a_xa_x^*)_{x\in X}$ is a projective orthogonal labelling of $G$. 
It follows that 
\begin{equation}\label{eq_pop}
\cl P_0(G) \subseteq \cl P(G).
\end{equation}

\begin{lemma}\label{l_dpol}
Let $G = (X,E)$ be a graph on $d$ vertices. Then $\cl P(G^c)\subseteq \cl P(G)^{\flat}$.
\end{lemma}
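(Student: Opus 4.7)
The plan is to pair a p.o.l.\ of $G^c$ against a p.o.l.\ of $G$ by forming a tensor product construction and exploiting the complementarity of edges of $G$ and $G^c$. Concretely, I would fix $(P_x)_{x\in X}$, a p.o.l.\ of $G^c$ acting on a finite dimensional Hilbert space $K$, and a state $\sigma$ on $K$, so that $u := (\Tr(P_x\sigma))_{x\in X}$ is an arbitrary element of $\cl P(G^c)$. To show $u\in \cl P(G)^\flat$, I need to verify that $\sum_{x\in X} \Tr(P_x\sigma)\Tr(Q_x\rho)\leq 1$ for every p.o.l.\ $(Q_x)_{x\in X}$ of $G$ on some $L$ and every state $\rho$ on $L$.

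The key observation is the following: for $x\ne y$, either $x\sim y$ in $G$, in which case $x\not\simeq y$ in $G^c$ and thus $P_xP_y=0$; or $x\not\sim y$ in $G$, in which case $x\not\simeq y$ in $G$ and thus $Q_xQ_y=0$. Consequently, the operators $R_x := P_x\otimes Q_x\in \cl L(K\otimes L)$ satisfy
\begin{equation*}
R_xR_y = (P_xP_y)\otimes(Q_xQ_y) = 0, \qquad x\ne y.
\end{equation*}
Since each $R_x$ is itself a projection (being the product of the two commuting projections $P_x\otimes I_L$ and $I_K\otimes Q_x$), the family $(R_x)_{x\in X}$ is a system of mutually orthogonal projections in $\cl L(K\otimes L)$, and therefore $\sum_{x\in X} R_x \leq I_{K\otimes L}$.

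Applying the state $\sigma\otimes\rho$ on $K\otimes L$ then yields
\begin{equation*}
\sum_{x\in X} \Tr(P_x\sigma)\Tr(Q_x\rho)
= \sum_{x\in X} \Tr\bigl(R_x(\sigma\otimes\rho)\bigr)
= \Tr\Bigl(\bigl(\textstyle\sum_{x\in X} R_x\bigr)(\sigma\otimes\rho)\Bigr)
\leq \Tr(\sigma\otimes\rho) = 1,
\end{equation*}
which is exactly the inequality needed for $u\in \cl P(G)^\flat$. No serious obstacle is anticipated; the only subtlety is the correct translation of the adjacency condition across complementation (in particular the role of the relation $\simeq$ versus $\sim$), which is what makes the tensor products $P_x\otimes Q_x$ pairwise orthogonal.
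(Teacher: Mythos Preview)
Your proof is correct and follows essentially the same approach as the paper: form the tensor products $P_x\otimes Q_x$, observe that for $x\neq y$ one of the two factors must vanish (by complementarity of the edge relations in $G$ and $G^c$), conclude that $\sum_x P_x\otimes Q_x\leq I$, and evaluate against the product state. Your write-up is slightly more explicit about why the tensor projections are mutually orthogonal, but the argument is the same.
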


\begin{proof}
Let $(P_x)_{x\in X}$ (resp. $(Q_x)_{x\in X}$) be a projective orthogonal labelling of $G$ (resp. $G^c$)
acting on a Hilbert space $K_1$ (resp. $K_2$), 
and let $\rho$ (resp. $\sigma$) be a state on $K_1$ (resp. $K_2$). 
We have that 
$$P_x\otimes Q_x \perp P_y\otimes Q_y \ \ \mbox{whenever } x,y\in X,  \ x\neq y.$$
It follows that the operator $\sum_{x\in X} P_x\otimes Q_x$ is a contraction, and thus 
\begin{eqnarray*}
\sum_{x\in X} \Tr(P_x\rho) \Tr(Q_x\sigma) 
& = & 
\sum_{x\in X} \Tr\left((P_x\otimes Q_x)(\rho\otimes\sigma)\right)\\
& = & 
\Tr\left(\left(\sum_{x\in X} P_x\otimes Q_x\right)(\rho\otimes\sigma)\right)\leq 1.
\end{eqnarray*}
\end{proof}

\begin{lemma}\label{l_p0in}
Let $G = (X,E)$ be a graph on $d$ vertices. 
Then $\cl P_0(G)\subseteq \frak{P}(\cl S_G)$.
\end{lemma}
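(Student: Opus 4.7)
The plan is to produce an explicit quantum channel associated with the orthogonal labelling and show that, when its adjoint is applied to a rank-one state built from $c$, it reproduces the prescribed diagonal vector. Fix an o.l.\ $(a_x)_{x\in X}$ of $G$ with $a_x$ in some finite-dimensional Hilbert space $K$, and a vector $c\in K$ with $\|c\|\le 1$. I want to exhibit $\Phi\in\frak{C}(\cl S_G)$ and $\sigma\ge 0$ with $\Tr(\sigma)\le 1$ such that $\Phi^*(\sigma) = \sum_{x\in X}|\langle a_x,c\rangle|^2 e_xe_x^*$.

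The natural candidate is the channel $\Phi:M_d\to \cl L(K)$ with Kraus operators $A_x := a_x e_x^*$, $x\in X$, and the state $\sigma := cc^*$. First I would compute
$$A_x^*A_y = e_x a_x^* a_y e_y^* = \langle a_y,a_x\rangle\, e_xe_y^*,$$
and observe that this is automatically in $\cl S_G$: if $x\simeq y$ it is a scalar multiple of $e_xe_y^*\in\cl S_G$, and if $x\not\simeq y$ the orthogonal labelling condition forces $\langle a_y,a_x\rangle=0$, so $A_x^*A_y=0$. This is exactly where the definition of an orthogonal labelling gets used. Next, the trace-preservation identity
$$\sum_{x\in X} A_x^*A_x = \sum_{x\in X} \|a_x\|^2 e_xe_x^* = I$$
follows from $\|a_x\|=1$, so $\Phi$ is a quantum channel with $\cl S_\Phi\subseteq \cl S_G$, i.e.\ $\Phi\in\frak{C}(\cl S_G)$.

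Finally I would compute the adjoint on $\sigma=cc^*$:
$$\Phi^*(cc^*) = \sum_{x\in X} A_x^*(cc^*)A_x = \sum_{x\in X} \langle cc^*\,a_x,a_x\rangle\, e_xe_x^* = \sum_{x\in X} |\langle a_x,c\rangle|^2\, e_xe_x^*,$$
which, under the identification of $\bb{R}^d$-valued tuples with diagonal matrices in $\cl D_X$, is precisely the element of $\cl P_0(G)$ we started with. Since $\Tr(cc^*)=\|c\|^2\le 1$, this displays the given element as a member of $\frak{P}(\cl S_G)$, completing the inclusion.

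There is no real obstacle here beyond being careful with conventions: the orthogonality condition from the labelling matches exactly the condition $A_x^*A_y\in\cl S_G$, and the unit-norm condition on the $a_x$ matches exactly trace preservation. The only small bookkeeping is making sure that a diagonal vector in $\cl D_X$ is interpreted as a positive element of $M_d$ in the definition of $\frak{P}(\cl S_G)$, which is the convention already adopted in the identification of $\cl D_X$ with diagonal matrices used throughout the paper.
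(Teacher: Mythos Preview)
Your proof is correct and follows essentially the same approach as the paper: define the channel with Kraus operators $A_x = a_x e_x^*$, verify $\cl S_\Phi \subseteq \cl S_G$ via the orthogonal labelling condition, and compute $\Phi^*(cc^*)$. If anything, your version is slightly more careful, since you explicitly verify trace preservation and handle the general case $\|c\|\le 1$ (the paper states the computation only for unit vectors $c$, though the extension is immediate).
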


\begin{proof}
Let $(a_x)_{x\in X} \subseteq \bb{C}^k$ be an orthogonal labelling of $G$
and $\Phi : M_d\to M_k$ be the quantum channel defined by 
$$\Phi(S) = \sum_{x\in X} (a_x e_x^*) S (e_x a_x^*), \ \ \ S\in M_d.$$
If $x\not\simeq y$ then $(e_x a_x^*)(a_y e_y^*) = \langle a_y, a_x\rangle e_x e_y^* = 0$, 
and hence $\cl S_{\Phi} \subseteq \cl S_G$. 
Given a unit vector $c\in \bb{C}^k$, we have that 
$\Phi^*(cc^*) = \left(|\langle a_x,c\rangle|^2\right)_{x\in X}$, and the proof is complete.
\end{proof}

\begin{lemma}\label{l_diagin}
Let $G = (X,E)$ be a graph on $d$ vertices. Then 
$$\cl P(G)^{\flat}\subseteq \cl D_X \cap \thet(\cl S_G).$$
\end{lemma}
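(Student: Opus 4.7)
The plan is to take an arbitrary $v\in\cl P(G)^{\flat}$, which by definition lies in $\cl D_X^+$, so write $v=\sum_{x\in X}v_xe_xe_x^*$ with $v_x\geq 0$. The hypothesis $v\in \cl P(G)^{\flat}$ is precisely that for every projective orthogonal labelling $(P_x)_{x\in X}$ of $G$ on a Hilbert space $L$ and every state $\rho$ on $L$ one has $\sum_x v_x\Tr(P_x\rho)\leq 1$, equivalently $\sum_x v_xP_x\leq I_L$. To prove $v\in\thet(\cl S_G)$ I must show, for an arbitrary quantum channel $\Phi\in\frak C(\cl S_G)$ with Kraus operators $A_1,\dots,A_m\colon\bb C^d\to K$, that $\Phi(v)\leq I_K$.

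The key construction is to manufacture a p.o.l.\ of $G$ out of the Kraus data. Set $W_x=\mathrm{span}\{A_pe_x:p\in[m]\}\subseteq K$ and let $Q_x\in\cl L(K)$ be the orthogonal projection onto $W_x$. I would verify that $(Q_x)_{x\in X}$ is a projective orthogonal labelling of $G$ as follows: whenever $x\not\simeq y$, the membership $A_p^*A_q\in\cl S_\Phi\subseteq\cl S_G=\mathrm{span}\{e_se_t^*:s\simeq t\}$ forces $\langle A_p^*A_qe_x,e_y\rangle=\langle A_qe_x,A_pe_y\rangle=0$ for all $p,q$; hence $W_x\perp W_y$ and so $Q_xQ_y=0$. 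Applying the hypothesis to this labelling and an arbitrary state on $K$ yields $\sum_x v_xQ_x\leq I_K$.

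It remains to compare $\Phi(v)=\sum_x v_xM_x$, where $M_x:=\Phi(e_xe_x^*)=\sum_pA_pe_xe_x^*A_p^*$, with $\sum_x v_xQ_x$. By construction $M_x$ is supported on $W_x$, so $M_x=Q_xM_xQ_x$. Because $\Phi$ is trace-preserving, $\Tr(M_x)=\Tr(e_xe_x^*)=1$, and the elementary bound $\|M_x\|\leq\Tr(M_x)=1$ for a positive operator then gives $M_x\leq Q_x$. Adding these inequalities weighted by the $v_x\geq 0$ yields $\Phi(v)=\sum_xv_xM_x\leq\sum_xv_xQ_x\leq I_K$, which is the desired conclusion, and since $\Phi\in\frak C(\cl S_G)$ was arbitrary, $v\in\thet(\cl S_G)$.

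The main obstacle, to my mind, is not any step individually but the initial idea of how to turn Kraus data into a projective labelling compatible with $G$: once one notices that the range subspaces $W_x$ are orthogonal precisely along the edges of $G^c$ (forced by $A_p^*A_q\in\cl S_G$), the rest is a bookkeeping comparison $M_x\leq Q_x$ made possible by the trace-preserving property. It is worth noting that trace-preservation is used essentially in step two; without it one only gets $\Tr(M_x)\leq\|\sum_pA_p^*A_p\|$, which would fail to give $M_x\leq Q_x$ and hence would not give the clean inclusion.
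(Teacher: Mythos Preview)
Your proof is correct and follows essentially the same approach as the paper: both construct from the Kraus operators $A_p$ the projections onto $\mathrm{span}\{A_pe_x:p\}$ to obtain a projective orthogonal labelling, then use the support/norm bound $\Phi(e_xe_x^*)\leq Q_x$ (the paper writes this as $Z_x\leq P_x$) to compare $\Phi(v)$ with $\sum_x v_xQ_x\leq I$. The only cosmetic difference is that the paper bounds $\|Z_x\|$ via the triangle inequality $\sum_p\|a_{p,x}\|^2=1$, whereas you use $\|M_x\|\leq\Tr(M_x)=1$; these are the same computation. (Your closing remark that trace preservation is essential is slightly overstated: for a subchannel one still has $\Tr(M_x)=\langle\sum_pA_p^*A_pe_x,e_x\rangle\leq 1$, so the inequality $M_x\leq Q_x$ survives---but this does not affect the proof of the lemma as stated.)
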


\begin{proof}
Let $\Phi : M_d\to M_k$ be a quantum channel with $\cl S_{\Phi}\subseteq \cl S_G$. 
Write 
$$\Phi(T) = \sum_{p=1}^m A_p T A_p^*, \ \ \ T\in M_d,$$
where $A_p : \bb{C}^d\to \bb{C}^k$ are linear operators such that 
$A_p^*A_q\in \cl S_G$ for all $p,q\in [m]$ and $\sum_{p=1}^m A_p^*A_p = I$. 
Let $a_{p,x} = A_pe_x$, $p\in [m]$, $x\in X$. Note that $a_{p,x}\in \bb{C}^k$ and set 
$Z_x = \sum_{p=1}^m a_{p,x}a_{p,x}^*$; thus, $Z_x\in M_k$, $x\in X$. 
Let $P_x$ be the projection onto the span of $\{a_{p,x} : p\in [m]\}$ and observe that 
\begin{equation}\label{eq_pxzx}
Z_x = P_x Z_x P_x, \ \ \ x\in X.
\end{equation}

Suppose that $x,y\in X$, $x\not\simeq y$. Then
$$\langle a_{q,y},a_{p,x}\rangle = \langle A_qe_y,A_pe_x\rangle = \langle A_p^*A_qe_y,e_x\rangle = 0, \ \ p,q\in [m].$$
It follows that the family $(P_x)_{x\in X}$ is a projective orthogonal labelling of $G$. 
On the other hand, 
\begin{equation}\label{eq_zx}
\|Z_x\| \leq 
\sum_{p=1}^m \|a_{p,x}a_{p,x}^*\|
=
\sum_{p=1}^m \|a_{p,x}\|^2
= \sum_{p=1}^m \langle A_p^*A_pe_x, e_x\rangle = 1.
\end{equation}
Relations (\ref{eq_pxzx}) and (\ref{eq_zx}) imply that 
\begin{equation}\label{eq_inpx}
Z_x\leq P_x, \ \ \ x\in X.
\end{equation}

Let $(t_x)_{x\in X}\in \cl P(G)^{\flat}$. Then  
\begin{eqnarray*}
\left\|\sum_{x\in X} t_x P_x\right\| 
& = & 
\max\left\{\Tr\left(\left(\sum_{x\in X} t_x P_x\right)\rho\right) : \rho \mbox{ a state on } \bb{C}^k\right\}\\
& = & 
\max\left\{\sum_{x\in X} t_x \Tr(P_x \rho) : \rho \mbox{ a state on } \bb{C}^k\right\} 
\leq 1.
\end{eqnarray*}
Since the operator $\sum_{x\in X} t_x P_x$ is positive, this implies that 
$\sum_{x\in X} t_x P_x \leq I$. 
Inequalities (\ref{eq_inpx}) now imply 
\begin{equation}\label{eq_less1}
\sum_{x\in X} t_x Z_x \leq I.
\end{equation}
Thus, letting $T = (t_x)_{x\in X}\in \cl D_X$, using (\ref{eq_less1}), we have
\begin{eqnarray*}
\Phi(T) 
& = & 
\sum_{p=1}^m A_pTA_p^* = 
\sum_{p=1}^m A_p \left(\sum_{x\in X} t_x e_x e_x^*\right) A_p^*\\
& = & 
\sum_{x\in X} t_x \sum_{p=1}^m (A_p e_x)(A_p e_x)^*
= 
\sum_{x\in X} t_x \sum_{p=1}^m a_{p,x} a_{p,x}^*
=
\sum_{x\in X} t_x Z_x \leq I.
\end{eqnarray*}
Therefore $T\in \thet(\cl S_G)$ and the proof is complete.
\end{proof}

\begin{theorem}\label{th_thetaeq}
Let $G = (X,E)$ be a graph on $d$ vertices. Then 
$$\thab(G) = \cl D_X\cap \thet(\cl S_G) = \Delta(\thet(\cl S_G)).$$
\end{theorem}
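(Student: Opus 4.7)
The plan is to close the cycle
$$\thab(G) \subseteq \cl D_X \cap \thet(\cl S_G) \subseteq \Delta(\thet(\cl S_G)) \subseteq \thab(G),$$
which yields all three equalities at once. The middle inclusion is immediate, since every element of $\cl D_X$ is fixed by $\Delta$. For the third inclusion, fix $T\in \thet(\cl S_G)$: Lemma~\ref{l_dualn} yields $T\in \frak{P}(\cl S_G)^{\sharp}$, and Lemma~\ref{l_p0in} gives $\cl P_0(G)\subseteq \frak{P}(\cl S_G)$, so $\Tr(pT)\leq 1$ for every $p\in \cl P_0(G)$. Since each such $p$ is diagonal, $\Tr(pT)=\Tr(p\Delta(T))$ and $\Delta(T)\in \cl D_X^+$, so $\Delta(T)\in \cl P_0(G)^{\flat}=\thab(G)$.

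The substantive step is the first inclusion. Lemma~\ref{l_diagin} already gives $\cl P(G)^{\flat}\subseteq \cl D_X\cap \thet(\cl S_G)$, so it suffices to establish $\cl P_0(G)^{\flat}\subseteq \cl P(G)^{\flat}$, which will follow from the pointwise inclusion $\cl P(G)\subseteq \cl P_0(G)$. Given a projective orthogonal labelling $(P_x)_{x\in X}$ acting on a Hilbert space $K$ together with a state $\rho$ on $K$, I view $\cl L(K)$ as a Hilbert space under the Hilbert-Schmidt inner product $\langle A,B\rangle_{\rm HS}=\Tr(B^*A)$, and set $c=\rho^{1/2}$; since $\Tr(\rho)=1$, $c$ is a unit vector. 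For each $x\in X$ with $P_x\rho^{1/2}\neq 0$, put $a_x=P_x\rho^{1/2}/\|P_x\rho^{1/2}\|_{\rm HS}$; for the remaining $x$'s, let $a_x$ be a unit vector in a suitable enlargement of $\cl L(K)$, chosen mutually orthogonal and orthogonal to the previously defined vectors as well as to $c$. The relation $P_xP_y=0$ for $x\not\simeq y$ and the idempotency $P_x^2=P_x$ then give, respectively, $\langle a_x,a_y\rangle_{\rm HS}=0$ and $|\langle a_x,c\rangle_{\rm HS}|^2=\Tr(P_x\rho)$. Hence $(\Tr(P_x\rho))_{x\in X}\in \cl P_0(G)$, completing the argument.

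The main obstacle is the construction in the last paragraph: the identification of the vector-valued setup of $\cl P_0(G)$ with the projection-valued setup of $\cl P(G)$, effected by the purification-like trick of regarding $\rho^{1/2}$ as a unit vector in the Hilbert-Schmidt Hilbert space $\cl L(K)$. Everything else follows routinely from the lemmas already established.
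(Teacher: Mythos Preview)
Your proof is correct, and it takes a genuinely different route from the paper's for the first inclusion $\thab(G)\subseteq \cl D_X\cap\thet(\cl S_G)$.

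The paper obtains $\thab(G)=\cl P(G)^{\flat}$ indirectly, via the duality $\thab(G)=\thab(G^c)^{\flat}$ from \cite{gls}, equation~(\ref{eq_pop}), and Lemma~\ref{l_dpol}, chaining anti-blockers:
$$\cl P_0(G)^{\flat}=\thab(G)=\cl P_0(G^c)^{\flat\flat}\subseteq \cl P(G^c)^{\flat\flat}\subseteq \cl P(G)^{\flat\flat\flat}=\cl P(G)^{\flat}\subseteq \cl P_0(G)^{\flat}.$$
You instead prove the pointwise inclusion $\cl P(G)\subseteq \cl P_0(G)$ directly, by the purification trick of realising $\rho^{1/2}$ as a unit vector in the Hilbert--Schmidt Hilbert space $\cl L(K)$ and setting $a_x=P_x\rho^{1/2}/\|P_x\rho^{1/2}\|_{\rm HS}$. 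Combined with~(\ref{eq_pop}) this actually yields the stronger statement $\cl P(G)=\cl P_0(G)$, and it avoids any appeal to the external result \cite[Corollary~3.4]{gls}. Your argument is thus more self-contained and more elementary; the paper's approach, on the other hand, highlights the role of Lov\'asz duality and of Lemma~\ref{l_dpol}.

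For the third inclusion you also take a small shortcut: the paper first shows $\Delta(T)\in\thet(\cl S_G)$ by building an auxiliary channel $\Psi$ from each $\Phi\in\frak{C}(\cl S_G)$, and only then concludes $\Delta(T)\in\thab(G)$. You go straight from $T\in\frak{P}(\cl S_G)^{\sharp}$ and the diagonality of $p\in\cl P_0(G)$ to $\Delta(T)\in\cl P_0(G)^{\flat}$, which is perfectly valid; the paper's intermediate step is not needed for the theorem, though it does establish the $\Delta$-invariance of $\thet(\cl S_G)$ as a byproduct.
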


\begin{proof}
By \cite[Corollary 3.4]{gls}, (\ref{eq_pop}), Remark \ref{r_trcc} (ii) and Lemma \ref{l_dpol}, we have 
\begin{eqnarray*}
\cl P_0(G)^{\flat} 
& = & 
\thab(G) = \thab(G^c)^{\flat} = \cl P_0(G^c)^{\flat\flat} 
\subseteq \cl P(G^c)^{\flat\flat} \subseteq \cl P(G)^{\flat\flat\flat}\\
& = & 
\cl P(G)^{\flat} \subseteq \cl P_0(G)^{\flat}.
\end{eqnarray*}
Thus, $\thab(G) = \cl P(G)^{\flat}$ and hence, by Lemma \ref{l_diagin}, 
\begin{equation}\label{eq_onin}
\thab(G) \subseteq \cl D_X\cap \thet(\cl S_G).
\end{equation}

Clearly, $\cl D_X\cap \thet(\cl S_G) \subseteq \Delta(\thet(\cl S_G))$.
Let $T\in \thet(\cl S_G)$ and suppose that $\Phi : M_d\to M_k$ is a quantum channel with $\cl S_{\Phi}\subseteq \cl S_G$. 
Let
$$\Phi(S) = \sum_{p=1}^m A_p S A_p^*, \ \ \ S\in M_d,$$
be a Kraus representation of $\Phi$.
Set $A_{p,x} = A_p (e_x e_x^*)$, and note that, since $\cl S_G$ is a $\cl D_X$-bimodule, we have that 
$A_{p,x}^*A_{q,y} = (e_x e_x^*) A_p^*A_q (e_y e_y^*)\in \cl S_G$. 
In addition,
$$\sum_{p=1}^m \sum_{x\in X} A_{p,x}^* A_{p,x} = \sum_{p=1}^m \sum_{x\in X}  (e_x e_x^*)A_p^* A_p (e_x e_x^*)
= \sum_{x\in X} e_x e_x^* = I.$$
Thus, the map
$\Psi : M_d\to M_k$, given by 
$$\Psi(S) = \sum_{p=1}^m \sum_{x\in X} A_{p,x} S A_{p,x}^*, \ \ \ S\in M_d,$$
is a quantum channel with $\cl S_{\Psi}\subseteq \cl S_G$. 
Hence
$$\Phi(\Delta(T)) = \sum_{p=1}^m A_p \left(\sum_{x\in X} (e_x e_x^*) T (e_x e_x^*)\right) A_p^* 
= \Psi(T) \leq I.$$
It follows that $\Delta(T)\in \thet(\cl S_G)$.
By Lemmas \ref{l_dualn} and \ref{l_p0in}, $\thet(\cl S_G)\subseteq \cl P_0(G)^{\sharp}$. 
It now follows that $\Delta(T)\in  \thab(G)$.
The proof is complete.
\end{proof}

In view of Theorem \ref{th_thetaeq}, $\thet(\cl S)$ can be thought of as a 
non-commutative version of $\thab(G)$.

\begin{corollary}\label{c_thq}
Let $G = (X,E)$ be a graph on $d$ vertices. Then $\theta(G) = \theta(\cl S_G)$. 
\end{corollary}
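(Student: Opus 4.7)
The plan is to deduce the equality directly from Theorem~\ref{th_thetaeq}, exploiting the basic fact that the trace of any matrix coincides with the trace of its diagonal part.

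First I would unwind the definitions: $\theta(G) = \sup\{\Tr(A) : A\in \thab(G)\}$ is a supremum over diagonal matrices, while $\theta(\cl S_G) = \sup\{\Tr(T) : T\in \thet(\cl S_G)\}$ is a supremum over arbitrary positive matrices in $M_d$. By Theorem~\ref{th_thetaeq}, these two sets are related by $\thab(G) = \cl D_X \cap \thet(\cl S_G) = \Delta(\thet(\cl S_G))$.

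For the inequality $\theta(\cl S_G)\leq \theta(G)$: given any $T\in \thet(\cl S_G)$, we have $\Tr(T) = \Tr(\Delta(T))$ since $\Tr$ only sees the diagonal entries. Theorem~\ref{th_thetaeq} gives $\Delta(T)\in \thab(G)$, hence $\Tr(T)\leq \theta(G)$. Taking the supremum over $T$ yields the desired bound.

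For the reverse inequality $\theta(G)\leq \theta(\cl S_G)$: given $A\in \thab(G)$, the same theorem tells us $A \in \cl D_X \cap \thet(\cl S_G) \subseteq \thet(\cl S_G)$, so $\Tr(A)\leq \theta(\cl S_G)$, and taking the supremum over $A$ finishes the argument. There is no real obstacle here, as all the content has already been packed into Theorem~\ref{th_thetaeq}; the corollary is a one-line observation about trace-invariance under $\Delta$.
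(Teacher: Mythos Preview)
Your proof is correct and follows essentially the same approach as the paper: both directions come immediately from Theorem~\ref{th_thetaeq} together with the identity $\Tr(T)=\Tr(\Delta(T))$. The only cosmetic difference is that the paper invokes compactness of $\thet(\cl S_G)$ to pick a maximiser $T$, whereas you argue directly with suprema; the content is identical.
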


\begin{proof}
By Theorem \ref{th_thetaeq}, $\thab(G)\subseteq \thet(\cl S_G)$, and hence $\theta(G)\leq \theta(\cl S_G)$. 
Since $\thet(\cl S_G)$ is compact, there exists $T\in \thet(\cl S_G)$ such that $\Tr(T) = \theta(\cl S_G)$. 
By Theorem \ref{th_thetaeq}, $\Delta(T) \in \thab(G)$, and hence 
$\Tr(T) = \Tr(\Delta(T)) \leq \theta(G)$. 
\end{proof}

\subsection{The second sandwich theorem}\label{ss_lst}
We now establish a chain of inclusions generalising the Sandwich Theorem (\ref{eq_lst}) to the non-commutative setting.

\begin{theorem}\label{th_sand}
Let $H$ be a finite dimensional Hilbert space and $\cl S\subseteq \cl L(H)$ be an operator system.
Then 
$$\ap(\cl S) \subseteq \thet(\cl S) \subseteq \fp(\cl S)^{\sharp}.$$
\end{theorem}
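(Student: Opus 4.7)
The plan is to prove the two inclusions separately, using that $\thet(\cl S)$ is a convex corner (Lemma \ref{l_dualn}) and that, by Proposition \ref{l_gen}(i), $\fp(\cl S)^{\sharp} = \{P : P \text{ is } \cl S\text{-full}\}^{\sharp}$; this reduces each inclusion to a statement about individual projections. For $\ap(\cl S) \subseteq \thet(\cl S)$, fix an $\cl S$-abelian projection $P$ and a quantum channel $\Phi \in \frak{C}(\cl S)$ with Kraus representation $\Phi(\cdot) = \sum_{p=1}^N A_p \cdot A_p^*$ (so $A_p^*A_q \in \cl S$ and $\sum_p A_p^*A_p = I$); I aim to show $\Phi(P) \leq I$. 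Using Proposition \ref{p_int}(i), choose an $\cl S$-independent orthonormal basis $\{\xi_1,\ldots,\xi_m\}$ of $PH$, so $\xi_i\xi_j^* \in \cl S^{\perp}$ for $i \neq j$. The key identity is
$$\langle A_p\xi_i, A_q\xi_j\rangle = \Tr(A_q^*A_p\,\xi_i\xi_j^*) = 0 \qquad (i \neq j,\; p,q\in[N]),$$
which forces the positive operators $B_i := \Phi(\xi_i\xi_i^*) = \sum_p (A_p\xi_i)(A_p\xi_i)^*$ to have pairwise orthogonal ranges; hence $\|\Phi(P)\| = \|\sum_i B_i\| = \max_i \|B_i\| \leq \max_i \Tr(B_i) = 1$ by trace-preservation, and $\Phi(P) \leq I$ follows.

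For $\thet(\cl S) \subseteq \fp(\cl S)^{\sharp}$, fix $T \in \thet(\cl S)$ and an $\cl S$-full projection $P$; the goal is $\Tr(TP) \leq 1$. The strategy is to design a quantum channel whose output has $\Tr(TP)$ as a scalar block. Fix an orthonormal basis $\{\eta_1,\ldots,\eta_r\}$ of $PH$ and define $W_j : H \to \CC \oplus H$ for $j \in [r]$ by $W_j\xi = (\langle \xi,\eta_j\rangle, 0)$, together with $W_{r+1}\xi = (0, P^{\perp}\xi)$. A direct computation gives $\sum_{j=1}^{r+1} W_j^*W_j = P + P^{\perp} = I$, and every $W_j^*W_k$ equals $\eta_j\eta_k^*$, $0$ or $P^{\perp}$, each of which lies in $\cl S$ (using $\cl L(PH)\subseteq \cl S$ and $P^{\perp} = I - P\in \cl S$); thus $\Phi(X) := \sum_j W_j X W_j^*$ belongs to $\frak{C}(\cl S)$. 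A block-wise evaluation yields
$$\Phi(T) = \begin{pmatrix}\Tr(TP) & 0 \\ 0 & P^{\perp}TP^{\perp}\end{pmatrix},$$
and the inequality $\Phi(T) \leq I$ extracts the desired bound $\Tr(TP) \leq 1$.

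The first inclusion is essentially formal once one spots the orthogonality identity; the substantive step is constructing the channel in the second part so that the scalar block of its output is exactly $\Tr(TP)$, rather than merely a norm bound on $PTP$, with $P^{\perp}$ serving solely to make the map trace-preserving. I expect this channel design to be the only non-routine ingredient.
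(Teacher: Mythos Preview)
Your proof is correct and follows essentially the same approach as the paper: for the first inclusion both arguments use the $\cl S$-independence to show that the images $\Phi(\xi_i\xi_i^*)$ have mutually orthogonal ranges (the paper computes $\Tr(\Phi(\xi_i\xi_i^*)\Phi(\xi_j\xi_j^*))=0$, you go directly via the inner product identity), and for the second inclusion both construct a channel with Kraus operators built from $P^{\perp}$ and rank-one maps $\eta_j\mapsto$ (a fixed unit vector), the only cosmetic difference being that the paper takes the output space to be $H$ with a chosen $\eta\in PH$ and phrases the conclusion as $P\in\frak{P}(\cl S)$, whereas you take $\bb{C}\oplus H$ and read off $\Tr(TP)\leq 1$ from the $(1,1)$ block of $\Phi(T)$.
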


\begin{proof}
Let $P$ be an $\cl S$-abelian projection, and suppose that 
$\{\xi_i\}_{i=1}^k$ is an $\cl S$-independent set of (unit) vectors that spans $PH$. 
Fix $\Phi\in \frak{C}(\cl S)$ with Kraus operators $A_1,\dots,A_m$, and let $i,j\in [k]$ with $i\neq j$. 
Then 
\begin{eqnarray*}
\Tr\left(\Phi(\xi_i\xi_i^*) \Phi(\xi_j\xi_j^*)\right) 
& = & 
\sum_{p,q = 1}^m \Tr\left(A_p(\xi_i\xi_i^*)A_p^* A_q(\xi_j\xi_j^*)A_q^*\right)\\
& = & 
\sum_{p,q = 1}^m \Tr\left((A_p\xi_i)(A_p\xi_i)^* ((A_q\xi_j)(A_q\xi_j)^*)\right)\\
& = & 
\sum_{p,q = 1}^m \left| \left\langle A_q\xi_j, A_p\xi_i \right\rangle\right|^2
= 
\sum_{p,q = 1}^m \left|\left\langle A_p^*A_q\xi_j, \xi_i\right\rangle\right|^2 = 0,
\end{eqnarray*}
since $\cl S_{\Phi}\subseteq \cl S$ while $\xi_i\xi_j^* \in \cl S^{\perp}$.
Since $\Phi(\xi_i\xi_i^*)$ and $\Phi(\xi_j\xi_j^*)$ are positive operators, we conclude that there exist 
mutually orthogonal projections $Q_1,\dots,Q_k$ such that 
$\Phi(\xi_i\xi_i^*) = Q_i\Phi(\xi_i\xi_i^*)Q_i$ for each $i \in [k]$. 
Since $\|\xi_i\| = 1$ and $\Phi$ is trace preserving,
$\Tr(\Phi(\xi_i\xi_i^*)) = 1$;
in particular, $\left\|\Phi(\xi_i\xi_i^*)\right\|\leq 1$. 
It now follows that 
$$\left\|\Phi(P)\right\| 
= \left\|\sum_{i=1}^k \Phi(\xi_i\xi_i^*)\right\| = \max_{i=1,\dots,k} \left\|\Phi(\xi_i\xi_i^*)\right\|  \leq 1,$$
that is, $P\in \thet(\cl S)$.
Since $\ap(\cl S)$ is generated, as a convex corner, by the $\cl S$-abelian projections, 
using Lemma \ref{l_dualn}, we conclude that $\ap(\cl S)\subseteq \thet(\cl S)$. 

Now suppose that $Q$ is an $\cl S$-full projection. Let 
$(\eta_j)_{j=1}^k$ be an orthonormal basis for the range of $Q$; then $\eta_i\eta_j^*\in \cl S$ for all $i,j\in [k]$.
Let $\eta$ be a unit vector with $\eta = Q\eta$ and 
$\Phi : \cl L(H)\to \cl L(H)$ be the quantum channel given by 
$$\Phi(T) = Q^{\perp} T Q^{\perp} + \Tr(TQ)\eta\eta^*, \ \ \ T\in \cl L(H).$$
For any $T\in \cl L(H)$ we have 
$$\langle T, \Phi^*(\eta\eta^*)\rangle = 
\langle \Phi(T), \eta\eta^*\rangle = \Tr(TQ);$$
thus, 
\begin{equation}\label{eq_eta}
\Phi^*(\eta\eta^*) = Q. 
\end{equation}
Note that 
$$\Phi(T) = Q^{\perp} T Q^{\perp} + \sum_{j=1}^k (\eta\eta_j^*) T (\eta_j\eta^*), \ \ \ T\in \cl L(H),$$
and that 
$$(\eta_i\eta^*)(\eta\eta_j^*) = \eta_i\eta_j^* \in \cl S, \ \ \ i, j\in [k].$$
It follows that $\cl S_{\Phi}\subseteq \cl S$. 
Now (\ref{eq_eta}) implies that $Q\in \frak{P}(\cl S)$. 
Since $Q$ is an arbitrary $\cl S$-full projection, by Lemma \ref{l_dualn},
$\thet(\cl S) = \frak{P}(\cl S)^{\sharp} \subseteq \fp(\cl S)^{\sharp}.$
\end{proof}

The classical Lov\'asz Sandwich Theorem
states that the chain of inequalities 
$$\alpha(G)\leq \theta(G) \leq \omega_{\rm f}(G^c)$$
holds for a graph $G$ (see \cite{knuth}). 
The next corollary, which is immediate from Theorem \ref{th_sand}, provides a non-commutative version.

\begin{corollary}\label{c_ast}
If $\cl S$ is a non-commutative graph then
$\alpha(\cl S)\leq \theta(\cl S) \leq \nph(\cl S).$
\end{corollary}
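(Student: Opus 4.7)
The plan is to obtain the stated chain of numerical inequalities by simply optimising the trace functional over the chain of convex corner inclusions provided by Theorem \ref{th_sand}. Concretely, Theorem \ref{th_sand} gives
$$\ap(\cl S) \subseteq \thet(\cl S) \subseteq \fp(\cl S)^{\sharp},$$
and since $\theta(\cdot)$, as defined in (\ref{eq_thtr}), is monotone with respect to set inclusion of bounded subsets of $\cl L(H)^+$, applying it termwise yields
$$\theta(\ap(\cl S)) \leq \theta(\thet(\cl S)) \leq \theta(\fp(\cl S)^{\sharp}).$$

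The final step is to identify the three quantities with the parameters named in the statement. The left-hand side is $\alpha(\cl S)$ by the definition of the independence number in Subsection \ref{ss_ncgp}, item (i). The middle term is $\theta(\cl S)$, by the definition of the Lov\'asz number of a non-commutative graph introduced immediately after Lemma \ref{l_dualn}. The right-hand side is $\nph(\cl S)$ by the definition of the complementary fractional full number in Subsection \ref{ss_ncgp}, item (vi). This gives exactly $\alpha(\cl S) \leq \theta(\cl S) \leq \nph(\cl S)$.

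There is essentially no obstacle here, because all the work is carried out in Theorem \ref{th_sand}; this corollary is a direct numerical read-off of that inclusion of convex corners. The only thing to double-check is that the suprema defining $\alpha$, $\theta$ and $\nph$ are all attained, which is immediate since each of $\ap(\cl S)$, $\thet(\cl S)$ and $\fp(\cl S)^{\sharp}$ is a closed and bounded subset of $\cl L(H)^+$ (the first and third by Proposition \ref{l_gen} and Remark \ref{r_trcc}, respectively; the middle one by Lemma \ref{l_dualn}), so the trace, being continuous, attains its maximum on each of them.
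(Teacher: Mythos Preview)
Your argument is correct and matches the paper's approach exactly: the corollary is stated there as ``immediate from Theorem \ref{th_sand}'', and you have spelled out precisely that immediacy by applying the monotone functional $\theta(\cdot)$ to the chain of inclusions and identifying the three terms with $\alpha(\cl S)$, $\theta(\cl S)$ and $\nph(\cl S)$.

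One small inaccuracy in your final paragraph: $\fp(\cl S)^{\sharp}$ need not be bounded. Remark \ref{r_trcc} only asserts that $\cl A^{\sharp}$ is a convex corner, not that it is bounded; and indeed, by Remark \ref{r_cpfpi}(i) there exist operator systems $\cl S$ with $\fp(\cl S)=\{0\}$, in which case $\fp(\cl S)^{\sharp}=\cl L(H)^+$ and $\nph(\cl S)=\infty$ (cf.\ the Remarks following Corollary \ref{c_ak}). Fortunately this does not matter: the inequalities $\theta(\cl A)\leq\theta(\cl B)$ for $\cl A\subseteq\cl B$ hold for suprema regardless of whether they are attained, so your ``double-check'' is unnecessary and can simply be dropped.
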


%%%%%%%%%%%%%%%%%%%%%%%%%%%%%%%%%%%%%%%%%%%%%%%%%%%%%%%%%%
%%%%%%%%%%%%%%%%%%%%%%%.   OTHER EXPRESSIONS     %%%%%%%%%%%%%%%%%%%%%%%%%%
%%%%%%%%%%%%%%%%%%%%%%%%%%%%%%%%%%%%%%%%%%%%%%%%%%%%%%%%%%

\section{Another quantisation of $\theta(G)$}\label{s_aqthe}

Let $\Phi : M_d\to M_k$ be a quantum channel. 
The \emph{one-shot classical Shannon capacity} of $\Phi$ was introduced in \cite{dsw}
as the maximal number of pure states that can be transmitted via $\Phi$ so that their 
images are perfectly distinguishable. As was pointed out in \cite{dsw}, 
it coincides with the independence number $\alpha(\cl S)$ of the non-commutative confusability graph $\cl S$ 
of $\Phi$. 
The \emph{classical Shannon capacity} \cite{dsw} of $\Phi$
is, on  the other hand, defined by setting 
$$c_0(\Phi) = \lim_{n\to\infty} \sqrt[n]{\alpha(\cl S^{\otimes n})},$$
where $\cl S^{\otimes n} = \underbrace{\cl S\otimes\cdots \otimes \cl S}_{n \ {\rm times}}$.
We note that it depends only on $\cl S$; thus, one may talk without ambiguity about the 
Shannon capacity of a non-commutative graph $\cl S$ and denote it by $c_0(\cl S)$. 

In the case $\cl S$ is the operator system of a graph $G$, we have that $c_0(\cl S)$ coincides with the 
Shannon capacity $c_0(G)$ of $G$. 
The Lov\'asz number of $G$ is in this case an upper bound of $c_0(G)$. 
We do not know if the inequality $c_0(\cl S)\leq \theta(\cl S)$ holds for general non-commutative graphs $\cl S$. 
However, $\theta(G)$ 
has several equivalent expressions \cite{gls, knuth, lo}; in particular, 
we have that
\begin{equation}\label{eq_ano}
\theta(G) = \min\left\{\max_{x\in X} \frac{1}{\left|\langle a_x,c\rangle\right|^2} : \|c\| = 1, (a_x)_{x\in X} \mbox{ an o.l. of } G\right\}.
\end{equation}
We will now consider a non-commutative version of the latter expression and show that it leads to a parameter 
that bounds from above the Shannon capacity of the corresponding non-commutative graph. 
Namely, for a non-commutative graph $\cl S$, we set
$$\hat{\theta}(\cl S) = \inf\left\{\left\|\Phi^*(\sigma)^{-1}\right\| : 
\sigma \geq 0, \Tr(\sigma)\leq 1, \Phi \in \frak{C}(\cl S), \Phi^*(\sigma) \mbox{ invertible}\right\}.$$

\begin{theorem}\label{th_eta}
Let $H$ be a finite dimensional Hilbert space and $\cl S\subseteq \cl L(H)$ be an operator system. 
Then

\smallskip

(i) \ $\hat{\theta}(\cl S)^{-1} = \sup\left\{\inf\left\{\|\Phi(\rho)\| : \rho \mbox{ a state on } H\right\} : \Phi \in \frak{C}(\cl S)\right\}$;

(ii) $\theta(\cl S)^{-1} =  \inf\left\{\sup\left\{\|\Phi(\rho)\| : \Phi \in \mathfrak C(\cl S)\right\} : \rho \mbox{ a state on } H\right\}$.
\end{theorem}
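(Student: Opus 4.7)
The plan is to derive both identities from two elementary dual expressions for a positive operator $A \geq 0$ in finite dimensions: $\|A\| = \sup\{\Tr(A\sigma) : \sigma \geq 0, \Tr(\sigma) \leq 1\}$, and, when $A$ is invertible, $\|A^{-1}\|^{-1} = \lambda_{\min}(A) = \inf\{\Tr(A\rho) : \rho \text{ a state}\}$. Both identities in the theorem then fall out by manipulating the orderings of the infimum and supremum in the bilinear pairing $(\rho,\sigma) \mapsto \langle \Phi(\rho),\sigma\rangle = \langle \rho,\Phi^*(\sigma)\rangle$, with $\rho$ running over states on $H$ and $\sigma$ over $\{\sigma \geq 0 : \Tr(\sigma)\leq 1\}$ on the output space -- sets that are compact and convex.

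For (i), I would fix $\Phi \in \frak{C}(\cl S)$ and establish
\[
\sup_\sigma \|\Phi^*(\sigma)^{-1}\|^{-1} = \sup_\sigma \lambda_{\min}(\Phi^*(\sigma)) = \sup_\sigma \inf_\rho \langle \rho,\Phi^*(\sigma)\rangle = \inf_\rho \sup_\sigma \langle \Phi(\rho),\sigma\rangle = \inf_\rho \|\Phi(\rho)\|.
\]
The invertibility restriction on $\Phi^*(\sigma)$ in the leftmost supremum may be dropped, since non-invertible $\Phi^*(\sigma)$ contribute $\lambda_{\min} = 0$ and so do not affect the value of the supremum; the central equality is the one requiring attention and follows from von Neumann's minimax theorem, applied to the continuous bilinear pairing on two compact convex sets. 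Taking the supremum over $\Phi \in \frak{C}(\cl S)$ of both outer expressions then yields $\hat\theta(\cl S)^{-1} = \sup_\Phi \inf_\rho \|\Phi(\rho)\|$, which is (i).

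For (ii), I would use homogeneity. Since $\Phi(T) \geq 0$ whenever $T \geq 0$, the defining inequality $\Phi(T) \leq I$ of $\thet(\cl S)$ is equivalent to $\|\Phi(T)\| \leq 1$. Writing a nonzero $T \geq 0$ as $T = t\rho$ with $t = \Tr(T) > 0$ and $\rho = T/t$ a state, linearity gives $\|\Phi(T)\| = t\|\Phi(\rho)\|$, so the constraint $T \in \thet(\cl S)$ becomes $t \leq (\sup_\Phi \|\Phi(\rho)\|)^{-1}$. Maximising $\Tr(T) = t$ first in $t$ for fixed $\rho$ and then over $\rho$ yields $\theta(\cl S) = (\inf_\rho \sup_\Phi \|\Phi(\rho)\|)^{-1}$, equivalent to (ii) after inversion.

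The principal technical step is the minimax swap in (i); the rest is bookkeeping. Strict positivity of the relevant inner extrema (so that the reciprocals make sense) is automatic: the identity channel lies in $\frak{C}(\cl S)$ for every operator system $\cl S$, and any quantum channel sends each state to a state on the output space, whose operator norm is bounded below by the reciprocal of the output dimension.
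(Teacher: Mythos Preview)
Your proposal is correct and follows essentially the same route as the paper: for (i) you rewrite $\|\Phi^*(\sigma)^{-1}\|^{-1}$ as $\lambda_{\min}(\Phi^*(\sigma))=\inf_\rho\langle\Phi^*(\sigma),\rho\rangle$, drop the invertibility restriction, and apply von Neumann's minimax theorem to the bilinear pairing on compact convex sets of states exactly as the authors do; for (ii) you use the same homogeneity reduction $T=t\rho$ to convert the defining constraint $\|\Phi(T)\|\le 1$ into $t\le(\sup_\Phi\|\Phi(\rho)\|)^{-1}$ and then optimise. The only cosmetic difference is that you remark explicitly on strict positivity of the inner extrema, which the paper leaves implicit.
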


\begin{proof}
(i) 
For an operator $A\in \cl L(H)^+$, write $\lambda_{\min}(A)$ for the smallest eigenvalue of $A$. 
Using the von Neumann minimax theorem, we have 
\begin{eqnarray*}
& & 
\hat{\theta}(\cl S)^{-1}\\
& = & 
\inf\left\{\|\Phi^*(\sigma)^{-1}\| : \sigma \mbox{ a state with } \lambda_{\min}(\Phi^*(\sigma)) > 0, \Phi\in \frak{C}(\cl S)\right\}^{-1}\\
& = & 
\inf\left\{\lambda_{\min}^{-1}(\Phi^*(\sigma)) : \sigma \mbox{ a state with } \lambda_{\min}(\Phi^*(\sigma)) > 0, 
\Phi\in \frak{C}(\cl S)\right\}^{-1}\\
\end{eqnarray*}
\begin{eqnarray*}
& = & 
\sup\left\{\lambda_{\min}(\Phi^*(\sigma)): \sigma \mbox{ a state with } \lambda_{\min}(\Phi^*(\sigma)) > 0, 
\Phi\in \frak{C}(\cl S)\right\}\\
& = & 
\sup\left\{\lambda_{\min}(\Phi^*(\sigma)): \sigma \mbox{ a state}, 
\Phi\in \frak{C}(\cl S)\right\}\\
& = & 
\sup\left\{\max\left\{\min\{\langle \Phi^*(\sigma)\xi,\xi\rangle : \|\xi\| = 1\} : \sigma \mbox{ a state}\right\} : \Phi\in \frak{C}(\cl S)\right\}\\
& = & 
\sup\left\{\max\left\{\min\{\langle \Phi^*(\sigma), \rho\rangle : \rho \mbox{ a state}\} : 
\sigma \mbox{ a state}\right\} : \Phi\in \frak{C}(\cl S)\right\}\\
& = & 
\sup\left\{\max\left\{\min\{\langle \sigma, \Phi(\rho)\rangle : \rho \mbox{ a state}\} : 
\sigma \mbox{ a state}\right\} : \Phi\in \frak{C}(\cl S)\right\}\\
& = & 
\sup\left\{\min\left\{\max\{\langle \sigma, \Phi(\rho)\rangle : \sigma \mbox{ a state}\} : 
\rho \mbox{ a state}\right\} : \Phi\in \frak{C}(\cl S)\right\}\\
& = & 
\sup\left\{\min\left\{\|\Phi(\rho)\| : \rho \mbox{ a state}\right\} : \Phi\in \frak{C}(\cl S)\right\}.
\end{eqnarray*}

(ii) 
Since the operator $\frac{1}{\Tr (T)}T$ is a state for each non-zero $T \in M_d^+$, we have
\begin{eqnarray*} 
\theta (\cl S)^{-1} 
& = & 
\sup\left\{\sup\{\lambda > 0 : \lambda \rho \in \thet (\cl S)\} : \rho \mbox{ a state}\right\}^{-1}\\
& = & 
\sup \left\{\sup\left\{\lambda > 0 : \|\Phi(\lambda \rho)\| \le 1 \mbox{ for all } \Phi \in \mathfrak C(\cl S)\right\} : \rho \mbox{ a state}
\right\}^{-1}\\
& = & 
\sup \left\{\sup\left\{\lambda > 0 : \lambda \leq \|\Phi(\rho)\|^{-1} \mbox{ for all } \Phi \in \mathfrak C(\cl S)\right\} : \rho \mbox{ a state}
\right\}^{-1}\\
& = & 
\sup \left\{\inf \left\{\|\Phi(\rho)\|^{-1} : \Phi \in \mathfrak C(\cl S)\right\}: \rho \mbox{ a state}\right\}^{-1}\\
& = & 
\inf \left\{\sup\left\{\|\Phi(\rho)\| : \Phi \in \mathfrak C(\cl S)\right\} : \rho \mbox{ a state}\right\}.
\end{eqnarray*}  
\end{proof}

\noindent {\bf Remark. } 
By compactness, the infimum appearing in part (i) of Theorem \ref{th_eta} is a minimum;
it is not however clear whether the supremum in this expression is attained. 

\medskip

Recall \cite{lpt} that the \emph{quantum subcomplexity} $\beta(\cl S)$ of a non-commutative graph 
$\cl S \subseteq \cl L(H)$ is defined by letting
$$\beta(\cl S) = \min\left\{k \in \bb{N} : \mbox{ there exists q.c. } \Phi : \cl L(H)\to M_k \mbox{ with }
\Phi\in \frak{C}(\cl S)\right\}.$$

\begin{theorem}\label{p_theta1}
Let $H$ be a Hilbert space with $\dim(H) = d$ and $\cl S \subseteq \cl L(H)$ be an operator system. 
Then 
$$d \inf\left\{\|\Phi( I_d) \|^{-1} : \Phi \in \mathfrak C(\cl S)\right\} 
\leq \theta(\cl S) \leq \hat{\theta}(\cl S) \leq \beta(\cl S) \leq d.$$
\end{theorem}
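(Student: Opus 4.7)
The proof falls into four inequalities that I would verify in turn, using Theorem \ref{th_eta} as the main tool. Nothing here should require new ideas — the content is packaged inside the min-max reformulations already established.

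First, for $d\inf\{\|\Phi(I_d)\|^{-1}:\Phi\in\mathfrak{C}(\cl S)\}\leq\theta(\cl S)$, I would instantiate the outer infimum in Theorem \ref{th_eta}(ii) at the maximally mixed state $\rho=\frac{1}{d}I_d$. Then
\[
\theta(\cl S)^{-1}\leq \sup\bigl\{\|\Phi(I_d/d)\|:\Phi\in\mathfrak{C}(\cl S)\bigr\}=\frac{1}{d}\sup\bigl\{\|\Phi(I_d)\|:\Phi\in\mathfrak{C}(\cl S)\bigr\},
\]
and inverting gives the stated bound.

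Next, for $\theta(\cl S)\leq\hat{\theta}(\cl S)$, I would compare parts (i) and (ii) of Theorem \ref{th_eta} directly via the minimax inequality: for the function $(\rho,\Phi)\mapsto\|\Phi(\rho)\|$ one always has $\sup_{\Phi}\inf_{\rho}\leq\inf_{\rho}\sup_{\Phi}$, which reads as $\hat{\theta}(\cl S)^{-1}\leq\theta(\cl S)^{-1}$. No compactness or convexity hypothesis is needed, since this is the trivial direction of minimax.

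For $\hat{\theta}(\cl S)\leq\beta(\cl S)$, let $k=\beta(\cl S)$ and pick a quantum channel $\Phi:\cl L(H)\to M_k$ with $\Phi\in\mathfrak{C}(\cl S)$. Take $\sigma=\frac{1}{k}I_k$, which is a state. Because $\Phi$ is trace preserving, its adjoint $\Phi^*$ is unital (this is the identity $\Tr(\Phi(T))=\Tr(T)=\langle T,\Phi^*(I_k)\rangle$ for all $T$, forcing $\Phi^*(I_k)=I_d$). Hence $\Phi^*(\sigma)=\frac{1}{k}I_d$, which is invertible with $\|\Phi^*(\sigma)^{-1}\|=k$. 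Therefore $\hat{\theta}(\cl S)\leq k=\beta(\cl S)$.

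Finally, for $\beta(\cl S)\leq d$, I would exhibit the identity channel $\mathrm{id}:\cl L(H)\to M_d$ as an element of $\mathfrak{C}(\cl S)$: it has the single Kraus operator $I_d$, so $\cl S_{\mathrm{id}}=\bb{C}I\subseteq\cl S$ since $\cl S$ is an operator system and therefore contains $I$. Its codomain has dimension $d$, giving $\beta(\cl S)\leq d$. The only conceptual step throughout is the min-max comparison in the second inequality, but in the easy direction, so I do not anticipate a genuine obstacle.
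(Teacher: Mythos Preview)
Your proof is correct and follows essentially the same approach as the paper: instantiate Theorem \ref{th_eta}(ii) at $\rho=\frac{1}{d}I_d$ for the first inequality, invoke the trivial direction of the minimax inequality between parts (i) and (ii) of Theorem \ref{th_eta} for the second, test $\sigma=\frac{1}{k}I_k$ for the third, and use the identity channel for the fourth. The paper is terser but the arguments coincide.
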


\begin{proof}
The first inequality follows from Theorem \ref{th_eta} (ii) by taking $\rho = \frac{1}{d} I_d$. 
The second inequality is immediate from Theorem \ref{th_eta}.
If $\Phi : \cl L(H) \to M_k$ is a quantum
channel in $\frak{C}(\cl S)$ then, letting $\sigma = \frac{1}{k}I_k$ we have 
$\left\|\Phi^*(\sigma)^{-1}\right\| = k$. Thus, $\hat{\theta}(\cl S) \leq \beta(\cl S)$. 
The last inequality, as pointed out in \cite{lpt}, follows by noting that the identity channel 
belongs to $\frak{C}(\cl S)$. 
\end{proof}

\begin{proposition}\label{p_quan}
Let $G = (X,E)$ be a graph. Then $\hat{\theta}(\cl S_G) = \theta(G)$.
\end{proposition}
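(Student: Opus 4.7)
The plan is to prove the equality by establishing the two inequalities separately, each of which I expect to follow fairly quickly from results already in the paper, so no essentially new construction is needed.

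For $\theta(G) \leq \hat{\theta}(\cl S_G)$, I would simply combine Corollary \ref{c_thq} (giving $\theta(G) = \theta(\cl S_G)$) with the inequality $\theta(\cl S) \leq \hat{\theta}(\cl S)$ established in Theorem \ref{p_theta1}. This half is immediate.

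For the reverse inequality $\hat{\theta}(\cl S_G) \leq \theta(G)$, I would exploit the characterisation \eqref{eq_ano} together with the channel construction already used in Lemma \ref{l_p0in}. Fix an orthogonal labelling $(a_x)_{x\in X}\subseteq \bb{C}^k$ of $G$ and a unit vector $c\in \bb{C}^k$; note that if the value $\max_{x\in X} |\langle a_x,c\rangle|^{-2}$ is finite then necessarily $\langle a_x,c\rangle \neq 0$ for every $x$, so near-optimal pairs in \eqref{eq_ano} automatically have this property. Define $\Phi(S) = \sum_{x\in X}(a_x e_x^*)S(e_x a_x^*)$; since each $a_x$ is a unit vector the operators $A_x = a_x e_x^*$ satisfy $\sum_x A_x^*A_x = \sum_x e_x e_x^* = I$, so $\Phi$ is a quantum channel, and as in Lemma \ref{l_p0in} one checks $\cl S_\Phi \subseteq \cl S_G$. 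A direct computation then gives
\[
\Phi^*(cc^*) = \sum_{x\in X} (e_x a_x^*)(cc^*)(a_x e_x^*) = \sum_{x\in X}|\langle a_x,c\rangle|^2\, e_x e_x^*,
\]
which is diagonal and, by the standing assumption, invertible with
\[
\|\Phi^*(cc^*)^{-1}\| = \max_{x\in X} \frac{1}{|\langle a_x,c\rangle|^2}.
\]
Since $cc^*$ is a state and $\Phi \in \frak C(\cl S_G)$, the right-hand side is an upper bound for $\hat{\theta}(\cl S_G)$. Taking the infimum (in fact, minimum) over all pairs $((a_x)_{x\in X},c)$ and applying \eqref{eq_ano} yields $\hat{\theta}(\cl S_G) \leq \theta(G)$.

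No step here is really an obstacle: the only thing to be slightly careful about is the invertibility of $\Phi^*(cc^*)$, which is guaranteed as soon as we work with pairs for which the Lovász expression is finite; the rest is an assembly of already-proved facts, namely the channel construction from Lemma \ref{l_p0in}, the classical formula \eqref{eq_ano}, Corollary \ref{c_thq} and Theorem \ref{p_theta1}.
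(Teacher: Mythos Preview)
Your proof is correct and follows essentially the same approach as the paper: the lower bound comes from $\theta(G)=\theta(\cl S_G)\le\hat\theta(\cl S_G)$ via Corollary~\ref{c_thq} and Theorem~\ref{p_theta1}, and the upper bound uses the channel from Lemma~\ref{l_p0in} together with the Lov\'asz formula~\eqref{eq_ano}, exactly as in the paper. Your added remark on invertibility of $\Phi^*(cc^*)$ is a helpful clarification but not a departure from the paper's argument.
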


\begin{proof}
By Theorems \ref{th_thetaeq} and \ref{p_theta1}, 
\begin{equation}\label{eq_ch1}
\theta(G) = \theta(\cl S_G) \leq \hat{\theta}(\cl S_G).
\end{equation}
Let $(a_x)_{x\in X} \subseteq \bb{C}^k$ be an orthogonal labelling of $G$
and $\Phi : M_d\to M_k$ be the quantum channel defined in the proof of Lemma \ref{l_p0in}. 
Let $c$ be a unit vector in $\bb{C}^k$ such that $\langle a_x,c\rangle \neq 0$ for all $x\in X$. 
We have that 
$$\left\|\Phi^*(cc^*)^{-1}\right\| = \max_{x\in X} \frac{1}{\left|\langle a_x,c\rangle\right|^2}.$$
Taking the infimum over all orthogonal representations of $G$ and unit vectors $c$
and using (\ref{eq_ano}),
we conclude that $\hat{\theta}(\cl S_G)\leq \theta(G)$. Together with (\ref{eq_ch1}), this completes the proof. 
\end{proof}

\noindent {\bf Remark. } It follows from Proposition \ref{p_quan} that the second inequality 
in Theorem \ref{p_theta1} can be strict; indeed, $\beta(\cl S)$ is an integer while $\theta(\cl S)$ can 
be fractional (for example, if $C_5$ is the $5$-cycle then $\theta(C_5) = \sqrt{5}$).

\begin{proposition}\label{p_subm}
Let $H_1$ and $H_2$ be finite dimensional Hilbert spaces and 
$\cl S_1\subseteq \cl L(H_1)$ and $\cl S_2 \subseteq \cl L(H_2)$ be operator systems. Then 
$\hat{\theta}(\cl S_1\otimes\cl S_2) \leq \hat{\theta}(\cl S_1) \hat{\theta}(\cl S_2)$.
\end{proposition}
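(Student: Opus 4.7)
The plan is to prove submultiplicativity by exhibiting, from near-optimizers of $\hat\theta(\cl S_1)$ and $\hat\theta(\cl S_2)$, a quantum channel on the tensor product that realizes (up to $\varepsilon$) the product of the two values.

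Concretely, I would fix $\varepsilon > 0$ and pick $\Phi_i \in \mathfrak C(\cl S_i)$ together with states $\sigma_i$ (with $\Tr(\sigma_i)\le 1$) such that $\Phi_i^*(\sigma_i)$ is invertible and $\|\Phi_i^*(\sigma_i)^{-1}\| \le \hat\theta(\cl S_i) + \varepsilon$ for $i = 1,2$. Form the tensor product channel $\Phi := \Phi_1 \otimes \Phi_2 : \cl L(H_1\otimes H_2) \to M_{k_1}\otimes M_{k_2}$ and the test state $\sigma := \sigma_1\otimes\sigma_2$.

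There are three routine verifications to perform. First, $\Phi$ is a quantum channel (tensor product of c.p.\ trace-preserving maps is c.p.\ trace-preserving). Second, $\Phi \in \mathfrak C(\cl S_1\otimes \cl S_2)$: if $(A_p)$ and $(B_q)$ are Kraus representations of $\Phi_1$ and $\Phi_2$ respectively, then $(A_p\otimes B_q)_{p,q}$ is a Kraus representation of $\Phi$, and
\begin{equation*}
(A_p\otimes B_q)^*(A_{p'}\otimes B_{q'}) \;=\; (A_p^*A_{p'})\otimes (B_q^*B_{q'}) \;\in\; \cl S_1\otimes \cl S_2,
\end{equation*}
so $\cl S_\Phi \subseteq \cl S_1\otimes \cl S_2$. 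Third, $\Phi^*(\sigma) = \Phi_1^*(\sigma_1)\otimes \Phi_2^*(\sigma_2)$, which is invertible with inverse $\Phi_1^*(\sigma_1)^{-1}\otimes \Phi_2^*(\sigma_2)^{-1}$, and $\Tr(\sigma)=\Tr(\sigma_1)\Tr(\sigma_2)\le 1$.

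Using multiplicativity of the operator norm on tensor products,
\begin{equation*}
\|\Phi^*(\sigma)^{-1}\| \;=\; \|\Phi_1^*(\sigma_1)^{-1}\|\cdot \|\Phi_2^*(\sigma_2)^{-1}\| \;\le\; (\hat\theta(\cl S_1)+\varepsilon)(\hat\theta(\cl S_2)+\varepsilon),
\end{equation*}
so by definition of $\hat\theta$, the left-hand side bounds $\hat\theta(\cl S_1\otimes\cl S_2)$; letting $\varepsilon\to 0$ yields the desired inequality. The argument has no real obstacles: the only point that requires any care is verifying $\cl S_\Phi \subseteq \cl S_1\otimes \cl S_2$, which is straightforward once one writes out Kraus representations and recalls (from Section~\ref{s_lc}) that the non-commutative graph of a channel is intrinsic and hence equals the span of $(A_p\otimes B_q)^*(A_{p'}\otimes B_{q'})$ for any fixed Kraus representation.
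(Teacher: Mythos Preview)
Your proof is correct and follows essentially the same approach as the paper's own proof: pick near-optimal $(\Phi_i,\sigma_i)$, form the tensor product channel and state, and use multiplicativity of the operator norm on $\Phi_1^*(\sigma_1)^{-1}\otimes\Phi_2^*(\sigma_2)^{-1}$. Your write-up is in fact slightly more explicit than the paper's, spelling out the Kraus verification of $\cl S_{\Phi_1\otimes\Phi_2}\subseteq \cl S_1\otimes\cl S_2$ and the check that $\Tr(\sigma_1\otimes\sigma_2)\le 1$.
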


\begin{proof}
Let $\epsilon > 0$, let $\sigma_i \in \cl L(H_i)^+$ be an operator with $\Tr(\sigma_i) \leq 1$
and $\Phi_i^*(\sigma_i)$ invertible, 
and let 
$\Phi_i : \cl L(H_i)\to \cl L(K_i)$ be a quantum channel with $\cl S_{\Phi_i}\subseteq \cl S_i$, $i = 1,2$, 
such that 
$$\left\|\Phi_i^*(\sigma_i)^{-1}\right\| \leq \hat{\theta}(\cl S_i) + \epsilon, \ \ \ i = 1,2.$$
Then $\Phi_1\otimes\Phi_2 : \cl L(H_1\otimes H_2) \to \cl L(K_1\otimes K_2)$ is a quantum channel
with $\cl S_{\Phi_1\otimes\Phi_2}\subseteq \cl S_1\otimes \cl S_2$.
In addition,
$(\Phi_1\otimes\Phi_2)^*(\sigma_1\otimes\sigma_2) = 
\Phi_1^*(\sigma_1) \otimes \Phi_2^*(\sigma_2)$
is invertible and 
\begin{eqnarray*}
\hat{\theta}(\cl S_1 \otimes \cl S_2) 
& \leq & \left\|(\Phi_1\otimes\Phi_2)^*(\sigma_1\otimes\sigma_2)^{-1}\right\|
=  \left\|\Phi_1^*(\sigma_1)^{-1}\right\| \left\|\Phi_2^*(\sigma_2)^{-1}\right\|\\
& \leq &  
(\hat{\theta}(\cl S_1) + \epsilon)(\hat{\theta}(\cl S_2) + \epsilon).
\end{eqnarray*}
The conclusion follows by letting $\epsilon \to 0$.
\end{proof}

Corollary \ref{c_ast}, Theorem \ref{p_theta1} and Proposition \ref{p_subm} have the following immediate consequence.
In view of Theorem \ref{p_theta1}, it improves the bound on the Shannon capacity by 
the parameter $\beta$, established in \cite{lpt}.

\begin{corollary}\label{c_boundSh}
Let $\cl S$ be a non-commutative graph. Then $c_0(\cl S) \leq \hat{\theta}(\cl S)$. 
\end{corollary}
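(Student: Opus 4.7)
The plan is to chain the three cited results together and then take limits. The target inequality is $c_0(\cl S)\leq \hat{\theta}(\cl S)$, where by definition $c_0(\cl S) = \lim_{n\to\infty}\sqrt[n]{\alpha(\cl S^{\otimes n})}$. So the natural strategy is to bound $\alpha(\cl S^{\otimes n})$ by a quantity that scales multiplicatively in $n$, and then extract the $n$-th root.

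First I would apply Corollary \ref{c_ast} to the non-commutative graph $\cl S^{\otimes n}$ to obtain $\alpha(\cl S^{\otimes n}) \leq \theta(\cl S^{\otimes n})$. Next, by the middle inequality of Theorem \ref{p_theta1}, $\theta(\cl S^{\otimes n}) \leq \hat{\theta}(\cl S^{\otimes n})$. Finally, iterating the submultiplicativity proved in Proposition \ref{p_subm} gives $\hat{\theta}(\cl S^{\otimes n}) \leq \hat{\theta}(\cl S)^n$. Combining,
\begin{equation*}
\alpha(\cl S^{\otimes n}) \leq \hat{\theta}(\cl S)^n, \qquad n\in\bb{N}.
\end{equation*}

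Taking $n$-th roots and passing to the limit $n\to\infty$ immediately yields $c_0(\cl S)\leq \hat{\theta}(\cl S)$. There is no real obstacle here since all the work has been packaged into the three results cited in the statement; the only mild point to note is that submultiplicativity of $\hat{\theta}$ (as opposed to merely $\theta$) is essential, which is precisely why the authors introduced $\hat{\theta}$ and proved Proposition \ref{p_subm}. The remark in Theorem \ref{p_theta1} that $\hat{\theta}(\cl S) \leq \beta(\cl S)$ then explains why this bound improves the previously known $c_0(\cl S)\leq \beta(\cl S)$ from \cite{lpt}.
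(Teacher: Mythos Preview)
Your proof is correct and follows exactly the argument the paper intends: the authors state that the corollary is an immediate consequence of Corollary~\ref{c_ast}, Theorem~\ref{p_theta1} and Proposition~\ref{p_subm}, and you have spelled out precisely that chain. The only difference is that you have made explicit the passage to $\cl S^{\otimes n}$, the $n$-th root, and the limit, which the paper leaves implicit.
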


Let $\vartheta(\cl S)$ be the quantisation 
of the Lov\'{a}sz number defined in \cite{dsw}, namely 
$$\vartheta(\cl S) = \max\{\|I + T\| : T\in \cl S^{\perp}, I + T \geq 0\},$$
and $\tilde{\vartheta}(\cl S) = \sup_{m\in \bb{N}} \vartheta(M_m(\cl S))$ be its complete version. 
It was shown in \cite{dsw} that $\tilde{\vartheta}(\cl S)$ is a bound on the Shannon capacity of $\cl S$.
The next examples imply that $\hat{\theta}$ can be genuinely better than $\tilde{\vartheta}$.

\medskip

\noindent {\bf Examples. (i) }
Let $d\in \bb{N}$. Suppose that $\Phi$ is a quantum channel with (non-zero) Kraus operators $A_1,\dots, A_m$
and confusability graph $\bb{C}I_d$. Then $A_i^*A_i = \lambda_i I$ for some $\lambda_i > 0$, and hence
the operator $V_i := \frac{1}{\sqrt{\lambda_i}} A_i$ is an isometry. 
Thus, 
$$\sum_{i=1}^m A_i A_i^* = \sum_{i=1}^m \lambda_i V_i V_i^* \leq \sum_{i=1}^m \lambda_i I = I.$$
Therefore, $I_d\in \thet(\bb{C} I_d)$ and hence $\theta(\bb{C} I_d) \geq d$. Together with Theorem \ref{p_theta1}
this shows that $\theta(\bb{C} I_d) = \hat{\theta}(\bb{C} I_d) = d$.
Note, on the other hand, that $\tilde{\vartheta}(\bb{C}I_d) = d^2$ \cite[(7)]{dsw},
showing that the ratio $\frac{\hat{\theta}(\cl S)}{\tilde{\vartheta}(\cl S)}$,
where $\cl S$ is a non-commutative graph, can be arbitrarily small.

\smallskip

{\bf (ii)} 
Let $\cl S_d = {\rm span} \{I_d, e_ie_j^* : i\neq j\} \subseteq M_d$.
It was shown in \cite[Theorem V.2]{lpt} that 
$\beta(\cl S_d\otimes \cl S_{d^2}) \leq d^2$; on the other hand, \cite{dsw} easily implies 
that $\vartheta(\cl S_d\otimes \cl S_{d^2}) \geq d^3$ (see \cite[Theorem V.2]{lpt}). 
It follows that the ratio
$\frac{\hat{\theta}(\cl S)}{\vartheta(\cl S)}$,
where $\cl S$ is a non-commutative graph, can also be arbitrarily small.

%%%%%%%%%%%%%%%%%%%%%%%%%%%%%%%%%%%%%%%%%%%%%%%%%%%%%%%%%%
%%%%%%%%%%%%%%%%%%%%%%%.   	CONTINUITY      %%%%%%%%%%%%%%%%%%%%%%%
%%%%%%%%%%%%%%%%%%%%%%%%%%%%%%%%%%%%%%%%%%%%%%%%%%%%%%%%%%

\section{Further properties}\label{s_cp}

In this section, we study the dependence of some of the parameters we introduced on the
operator system. Our main focus is on $\theta$, 
but we also point out some auxiliary results for the other parameters.

\subsection{Monotonicity}\label{ss_m}

Let $\cl S \subseteq \cl L(H)$ and $\cl T \subseteq \cl L(K)$ be non-commutative graphs. 
A \emph{homomorphism} from $\cl S$ into $\cl T$ \cite{stahlke} 
is a quantum channel $\Gamma  : \cl L(H)\to \cl L(K)$ that has a Kraus representation 
$\Gamma(S) = \sum_{i=1}^m A_i S A_i^*$, such that 
$$A_i^* \cl T A_j \subseteq \cl S, \ \ \ i,j = 1,\dots,m.$$
If there exists a homomorphism from $\cl S$ into $\cl T$, we write $\cl S \to \cl T$. 
It was shown in \cite{stahlke} that if $G$ and $H$ are graphs then $\cl S_G\to \cl S_H$ if and only if 
there exists a graph homomorphism from $G$ to $H$.

\begin{proposition}\label{p_homom}
Let $\cl S$ and $\cl T$ be non-commutative graphs. If $\cl S\to \cl T$ then $\theta(\cl S) \leq \theta(\cl T)$
and $\hat{\theta}(\cl S) \leq \hat{\theta}(\cl T)$.
\end{proposition}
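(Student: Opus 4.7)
The key observation is that $\frak{C}(\cl S)$ is closed under right-composition with $\Gamma$: if $\Phi \in \frak{C}(\cl T)$, then $\Phi \circ \Gamma \in \frak{C}(\cl S)$. Indeed, let $\Gamma$ have Kraus operators $(A_i)_{i=1}^m$ with $A_i^*\cl T A_j \subseteq \cl S$ for all $i,j$, and let $\Phi : \cl L(K) \to M_k$ have Kraus operators $(B_p)_{p=1}^n$. Then $\Phi \circ \Gamma$ is a quantum channel from $\cl L(H)$ to $M_k$ with Kraus operators $\{B_pA_i\}$; since $B_p^*B_q \in \cl S_\Phi \subseteq \cl T$,
$$(B_pA_i)^*(B_qA_j) = A_i^*(B_p^*B_q)A_j \in A_i^*\cl T A_j \subseteq \cl S,$$
so $\cl S_{\Phi\circ\Gamma}\subseteq \cl S$.

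For the inequality $\theta(\cl S) \leq \theta(\cl T)$, I would push forward along $\Gamma$. Given $T \in \thet(\cl S)$ and an arbitrary $\Phi \in \frak{C}(\cl T)$, the previous paragraph gives $\Phi \circ \Gamma \in \frak{C}(\cl S)$, hence $\Phi(\Gamma(T)) = (\Phi\circ\Gamma)(T) \leq I$. Thus $\Gamma(T) \in \thet(\cl T)$, and since $\Gamma$ is trace preserving, $\Tr(T) = \Tr(\Gamma(T)) \leq \theta(\cl T)$. Taking the supremum over $T$ yields $\theta(\cl S) \leq \theta(\cl T)$.

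For $\hat\theta$ I would avoid the formula $\hat\theta(\cl S) = \inf\|\Phi^*(\sigma)^{-1}\|$ directly, since $\Gamma^*(\Phi^*(\sigma))$ need not remain invertible when $\Phi^*(\sigma)$ is, and instead use the variational description of Theorem \ref{th_eta}(i). Fix $\Phi \in \frak{C}(\cl T)$. Since $\Gamma$ maps states on $H$ to states on $K$,
$$\inf_{\rho}\|(\Phi\circ\Gamma)(\rho)\| = \inf_{\rho}\|\Phi(\Gamma(\rho))\| \;\geq\; \inf_{\rho'}\|\Phi(\rho')\|,$$
where $\rho$ ranges over states on $H$ and $\rho'$ over states on $K$. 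Because $\Phi \circ \Gamma \in \frak{C}(\cl S)$, the leftmost quantity is at most $\hat\theta(\cl S)^{-1}$; taking the supremum over $\Phi \in \frak{C}(\cl T)$ on the right and applying Theorem \ref{th_eta}(i) gives $\hat\theta(\cl S)^{-1} \geq \hat\theta(\cl T)^{-1}$, i.e.\ $\hat\theta(\cl S) \leq \hat\theta(\cl T)$. The only substantive step is the opening Kraus computation that shows $\cl S_{\Phi\circ\Gamma}\subseteq \cl S$; once this is in hand, both monotonicity statements follow by essentially the same set-inclusion argument.
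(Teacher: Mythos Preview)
Your proof is correct and follows essentially the same route as the paper: establish that $\Phi\circ\Gamma\in\frak{C}(\cl S)$ whenever $\Phi\in\frak{C}(\cl T)$, push $T\in\thet(\cl S)$ forward by $\Gamma$ to get the $\theta$ inequality, and use the variational formula of Theorem~\ref{th_eta}(i) together with the fact that $\Gamma$ sends states to states for the $\hat\theta$ inequality. The only difference is cosmetic: you spell out the Kraus computation for $\cl S_{\Phi\circ\Gamma}\subseteq\cl S$, which the paper asserts without detail.
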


\begin{proof}
Let $\Gamma$ be a homomorphism from $\cl S$ into $\cl T$.
If $\Psi\in \frak{C}(\cl T)$ then $\Psi \circ \Gamma\in \frak{C}(\cl S)$.
Letting $S\in \thet(\cl S)$ be such that $\Tr(S) = \theta(\cl S)$ we have 
that $\Gamma(S)\in \cl L(K)^+$ and 
$(\Psi \circ \Gamma)(S) \leq I$, showing that $\Gamma(S) \in \thet(\cl T)$. 
Thus, $\theta(\cl T) \geq \Tr(\Gamma(S)) = \theta(\cl S).$
In addition, 
$$\min\{\|(\Psi\circ \Gamma)(\sigma)\| : \sigma \mbox{ a state on } H\}\\
\geq 
\min\{\|\Psi(\rho)\| : \rho \mbox{ a state on } K\}.$$
By Theorem \ref{th_eta}, 
$\hat{\theta}(\cl S) \leq \hat{\theta}(\cl T)$.
\end{proof}

\begin{lemma}\label{l_smms}
Let $H$ be a finite dimensional Hilbert space, 
$\cl S\subseteq \cl L(H)$ be an operator system and $m\in \bb{N}$. 
Then $M_m(\cl S) \to \cl S$ and $\cl S\to M_m(\cl S)$. 
\end{lemma}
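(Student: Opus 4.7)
The plan is to exhibit explicit homomorphisms in both directions by using the canonical embedding/projection operators attached to the direct sum decomposition $H^m = H \oplus \cdots \oplus H$. Let $P_i : H^m \to H$ denote the projection onto the $i$-th summand, so that $P_i^* : H \to H^m$ is the isometric embedding as the $i$-th summand; note $P_i P_i^* = I_H$ and $\sum_{i=1}^m P_i^* P_i = I_{H^m}$.

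For $M_m(\cl S) \to \cl S$, I will take $A_i := P_i : H^m \to H$ as Kraus operators. The identity $\sum_i A_i^* A_i = \sum_i P_i^* P_i = I_{H^m}$ ensures that $\Gamma(X) := \sum_{i=1}^m P_i X P_i^*$ is a quantum channel from $\cl L(H^m)$ to $\cl L(H)$. The homomorphism condition is that $A_i^* S A_j \in M_m(\cl S)$ for every $S \in \cl S$ and every $i,j \in [m]$; but $P_i^* S P_j$ is precisely the $m \times m$ block matrix with $S$ in position $(i,j)$ and zeros elsewhere, which clearly belongs to $M_m(\cl S)$.

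For $\cl S \to M_m(\cl S)$, I will take $B_i := \tfrac{1}{\sqrt{m}} P_i^* : H \to H^m$ as Kraus operators. Trace preservation follows from
\[
\sum_{i=1}^m B_i^* B_i \;=\; \frac{1}{m}\sum_{i=1}^m P_i P_i^* \;=\; \frac{1}{m}\cdot m\, I_H \;=\; I_H,
\]
so $\Psi(S) := \sum_i B_i S B_i^* = \tfrac{1}{m}\sum_i P_i^* S P_i$ is a quantum channel $\cl L(H) \to \cl L(H^m)$. For any $T = (T_{pq})_{p,q=1}^m \in M_m(\cl S)$, a direct block-matrix computation gives $B_i^* T B_j = \tfrac{1}{m} P_i T P_j^* = \tfrac{1}{m} T_{ij}$, and since $T_{ij} \in \cl S$ by definition of $M_m(\cl S)$, the required inclusion $B_i^* M_m(\cl S) B_j \subseteq \cl S$ holds.

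There is no genuine obstacle here: once the canonical summand projections are introduced, both verifications are reduced to the standard identities $P_i P_i^* = I_H$ and $\sum_i P_i^* P_i = I_{H^m}$ together with the observation that the map $(i,j) \mapsto P_i^* (\cdot) P_j$ realizes the matrix unit embedding of $\cl L(H)$ into $\cl L(H^m) \cong M_m(\cl L(H))$.
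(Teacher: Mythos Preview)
Your proof is correct and is essentially identical to the paper's argument: with $V_i := P_i^*$ your Kraus operators $P_i$ and $\tfrac{1}{\sqrt m}P_i^*$ are exactly the paper's $V_i^*$ and $\tfrac{1}{\sqrt m}V_i$, and the two verifications coincide line by line.
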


\begin{proof}
For $i \in [m]$, 
let $V_i : H\to H^{m}$ be the operator given by $V_i(\xi) = (\xi_k)_{k=1}^m$, where $\xi_i = \xi$ and 
$\xi_k = 0$ if $k\neq i$.
Then $\sum_{i=1}^m V_i V_i^* = I$, and hence the map $\Gamma : \cl L(H^m)\to \cl L(H)$ given by 
$\Gamma(S) = \sum_{i=1}^m V_i^* S V_i$, is a quantum channel. 
Clearly, $V_i \cl S V_j^* \subseteq M_m(\cl S)$ for all $i,j \in [m]$; in other words, $M_m(\cl S) \to \cl S$. 

Let $\Lambda : \cl L(H)\to \cl L(H^m)$ be the channel given by $\Lambda(S) = \frac{1}{m} S\otimes I_m$; 
thus, $\Lambda(S) = \frac{1}{m} \sum_{i=1}^m V_i S V_i^*$, $S\in \cl L(H)$.
Moreover, $V_i^* M_m(\cl S) V_j = \cl S$ for all $i,j \in [m]$. 
It follows that $\cl S\to M_m(\cl S)$. 
\end{proof}

\begin{corollary}\label{c_stability}
Let $H$ be a finite dimensional Hilbert space and $\cl S \subseteq \cl L(H)$ be an operator system. 
Then $\theta(\cl S) = \theta(M_m(\cl S))$ and $\hat{\theta}(\cl S) = \hat{\theta}(M_m(\cl S))$ for every $m\in \bb{N}$.
\end{corollary}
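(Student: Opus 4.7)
The corollary is an immediate combination of the two preceding results, so the plan is simply to chain them together. By Lemma \ref{l_smms}, we have homomorphisms in both directions, $\cl S \to M_m(\cl S)$ and $M_m(\cl S) \to \cl S$. Applying Proposition \ref{p_homom} to the first homomorphism gives $\theta(\cl S) \leq \theta(M_m(\cl S))$ and $\hat{\theta}(\cl S) \leq \hat{\theta}(M_m(\cl S))$, while applying it to the second yields the reverse inequalities. Combining the two gives the stated equalities.

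There is essentially no obstacle here, since the substantive content has been extracted beforehand: Lemma \ref{l_smms} exhibits the concrete Kraus operators $V_i$ (and, for $\cl S \to M_m(\cl S)$, their use in the channel $S \mapsto \frac{1}{m} S \otimes I_m$) that witness the two homomorphisms, and Proposition \ref{p_homom} shows monotonicity of $\theta$ and $\hat\theta$ under homomorphisms via pre-composition of channels. The only care needed is to note that both $\theta$ and $\hat{\theta}$ obey the monotonicity statement simultaneously, which is exactly what Proposition \ref{p_homom} provides in one package.

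Thus the entire proof reduces to the one-line observation
\[
\cl S \to M_m(\cl S) \to \cl S,
\]
followed by applying Proposition \ref{p_homom} to each arrow. I would present it in essentially this compressed form.
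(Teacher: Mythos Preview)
Your proof is correct and matches the paper's approach exactly: the paper's proof reads ``Immediate from Proposition \ref{p_homom} and Lemma \ref{l_smms},'' which is precisely the chaining you describe.
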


\begin{proof}
Immediate from Proposition \ref{p_homom} and Lemma \ref{l_smms}. 
\end{proof}

\noindent {\bf Remark.} 
Let $H$ (resp. $K$) be a finite dimensional Hilbert space and $\cl S\subseteq \cl L(H)$ 
(resp. $\cl T\subseteq \cl L(K)$) be an operator system. If $\cl S\to \cl T$ then $\alpha(\cl S)\leq \alpha(\cl T)$.
Indeed, suppose that $\Gamma : \cl L(H)\to \cl L(K)$ is a homomorphism from $\cl S$ to $\cl T$ 
with Kraus operators $A_1,\dots,A_m$, and that $\{\xi_i\}_{i=1}^k \subseteq H$ is an $\cl S$-independent set. 
Let $V$ be the column isometry $(A_i)_{i=1}^m$; then $\{V\xi_i\}_{i=1}^k$ is an $M_m(\cl T)$-independent set, 
and hence $\alpha(\cl S) \leq \alpha(M_m(\cl T))$. The claim follows from the fact that 
$\alpha(M_m(\cl T)) = \alpha(\cl T)$ \cite[Remark IV.6 (i)]{lpt}. 
By Corollary \ref{c_ak},
if $\cl S^c\to \cl T^c$ then $\omega(\cl S)\leq \omega(\cl T)$, and 
$\omega(M_m(\cl T)) = \omega(\cl T)$ for any $\cl T$. 
%We do not know if the rest of the parameters introduced in Subsection \ref{ss_ncgp} are monotone under homomorphisms. 

\subsection{Equivalent expressions and grading}\label{ss_eeg}

Let $H$ be a $d$-dimensional Hilbert space and $\cl S$ be a non-commutative graph acting on $H$.
We call a linear map $\Phi : M_d\to M_k$ a \emph{subchannel} if it is completely positive and trace decreasing. 
Note that $\Phi$ is a subchannel if and only if its dual $\Phi^* : M_k\to M_d$
is subunital in the sense that $\Phi^*(I)\leq I$, if and only if the Kraus operators $A_1\dots,A_m : \bb{C}^d\to\bb{C}^k$
satisfy the relation $\sum_{i=1}^m A_i^*A_i \leq I$. 
Set 
$$\frak{C}_{{\rm sub}}(\cl S) = \left\{\Phi : M_d\to M_k \ : \ k\in \bb{N}, \ \Phi \mbox{ a subchannel with }
\cl S_{\Phi}\subseteq \cl S\right\}.$$
Let 
$$\thet\mbox{}_{\sub}(\cl S) = \{T\in \cl L(H)^+ : \Phi(T)\leq I \mbox{ whenever } \Phi\in \frak{C}_{\sub}(\cl S)\},$$
$$\frak{P}_{\sub}(\cl S) = \{\Phi^*(\sigma) : \Phi\in \frak{C}_{\sub}(\cl S), \sigma\geq 0, \Tr(\sigma)\leq 1\},$$
and $\theta_{\sub}(\cl S) = \max\{\Tr(A) : A\in \thet_{\sub}(\cl S)\}$.
Set also 
$$\hat{\theta}_{\sub}(\cl S) = \inf\left\{\left\|\Phi^*(\sigma)^{-1}\right\| : 
\sigma \geq 0, \Tr(\sigma)\leq 1, \Phi^*(\sigma) \mbox{ inv. and }  \Phi\in \frak{C}_{\sub}(\cl S)\right\}.$$

\begin{proposition}\label{p_insub}
Let $H$ be a finite dimensional Hilbert space and $\cl S\subseteq \cl L(H)$ be an operator system.
Then $\thet_{\sub}(\cl S) = \thet(\cl S)$, 
$\theta_{\sub}(\cl S) = \theta(\cl S)$ and $\hat{\theta}_{\sub}(\cl S) = \hat{\theta}(\cl S)$.
\end{proposition}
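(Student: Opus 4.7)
The plan is to show that every subchannel in $\frak{C}_{\sub}(\cl S)$ can be dilated to a genuine quantum channel in $\frak{C}(\cl S)$ in a way that preserves the relevant quantities, so the ``sub'' versions of the three parameters cannot be strictly smaller than the original ones.

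The trivial inclusion $\frak{C}(\cl S) \subseteq \frak{C}_{\sub}(\cl S)$ immediately gives $\thet_{\sub}(\cl S) \subseteq \thet(\cl S)$, $\theta_{\sub}(\cl S) \le \theta(\cl S)$, and $\hat{\theta}_{\sub}(\cl S) \le \hat{\theta}(\cl S)$. For the reverse inequalities, I would fix a subchannel $\Phi : M_d \to M_k$ in $\frak{C}_{\sub}(\cl S)$ with Kraus operators $A_1,\dots,A_m$, so that $\sum_{i=1}^m A_i^*A_i \le I$, and set $B = \bigl(I - \sum_{i=1}^m A_i^*A_i\bigr)^{1/2}\in \cl L(H)$. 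I would then define $\tilde A_i : H \to \bb{C}^k \oplus H$ by $\tilde A_i \xi = (A_i\xi, 0)$ for $i\in [m]$ and $\tilde A_{m+1}\xi = (0, B\xi)$, and let $\tilde\Phi$ be the corresponding completely positive map. A direct computation shows $\sum_{i=1}^{m+1}\tilde A_i^*\tilde A_i = I$, so $\tilde\Phi$ is a quantum channel, while $\tilde A_i^*\tilde A_j = A_i^*A_j \in \cl S_\Phi\subseteq \cl S$ for $i,j\in[m]$, $\tilde A_{m+1}^*\tilde A_j = 0$ for $j \in [m]$, and $\tilde A_{m+1}^*\tilde A_{m+1} = I - \sum_{i=1}^m A_i^*A_i \in \cl S$ (here the operator system condition $I\in\cl S$ is essential). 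Hence $\cl S_{\tilde\Phi} \subseteq \cl S$, i.e.\ $\tilde\Phi\in \frak{C}(\cl S)$.

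Because the ranges of $\tilde A_i$ ($i \le m$) and $\tilde A_{m+1}$ are orthogonal, $\tilde\Phi(T) = \Phi(T) \oplus BTB^*$ for every $T\in \cl L(H)$. If $T\in \thet(\cl S)$ then $\tilde\Phi(T) \le I$, and reading off the first diagonal block gives $\Phi(T) \le I$; thus $T\in \thet_{\sub}(\cl S)$. This proves $\thet(\cl S) \subseteq \thet_{\sub}(\cl S)$, hence equality, and consequently $\theta(\cl S) = \theta_{\sub}(\cl S)$. For the last identity I would compute $\tilde\Phi^*(Y_1\oplus Y_2) = \Phi^*(Y_1) + BY_2 B$; choosing $\tilde\sigma := \sigma \oplus 0$ we get a positive operator of trace $\Tr(\sigma) \le 1$ with $\tilde\Phi^*(\tilde\sigma) = \Phi^*(\sigma)$, so invertibility and the norm of the inverse are preserved. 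Taking the infimum over $\Phi\in\frak{C}_{\sub}(\cl S)$ and admissible $\sigma$ yields $\hat{\theta}(\cl S) \le \hat{\theta}_{\sub}(\cl S)$, giving equality.

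The main obstacle is the verification that the dilated channel $\tilde\Phi$ still has its confusability graph inside $\cl S$; this is exactly where one needs the fact that $I \in \cl S$ (so that $I - \sum A_i^*A_i$, which is a new diagonal entry in $\cl S_{\tilde\Phi}$, lies in $\cl S$). Everything else is bookkeeping on block-diagonal operators.
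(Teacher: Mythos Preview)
Your proof is correct, and for the identities $\thet_{\sub}(\cl S)=\thet(\cl S)$ and $\theta_{\sub}(\cl S)=\theta(\cl S)$ it is essentially the paper's argument: both complete a subchannel $\Phi$ to a channel $\tilde\Phi\in\frak{C}(\cl S)$ by adjoining a single extra Kraus operator built from the defect $\bigl(I-\sum_i A_i^*A_i\bigr)^{1/2}$, landing in a summand orthogonal to the original output space, and then read off $\Phi(T)\leq I$ from $\tilde\Phi(T)\leq I$.

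For $\hat\theta_{\sub}(\cl S)=\hat\theta(\cl S)$ your route is genuinely shorter than the paper's. The paper passes to the dual formulation $\hat\theta_{\sub}(\cl S)^{-1}=\sup_{\Phi\in\frak{C}_{\sub}(\cl S)}\min_{\rho}\|\Phi(\rho)\|$ (via the proof of Theorem~\ref{th_eta}) and then constructs a more elaborate dilation $\hat\Phi$ with \emph{several} weighted copies $p_rB_r$ of the defect operator, the weights being chosen small enough ($p_r\leq \min_\rho\|\Phi(\rho)\|$) so that $\|\hat\Phi(\sigma)\|=\|\Phi(\sigma)\|$ for every state $\sigma$. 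You instead stay with the original definition of $\hat\theta$ and observe that for the \emph{same} one-block dilation $\tilde\Phi$ used above, the choice $\tilde\sigma=\sigma\oplus 0$ gives $\tilde\Phi^*(\tilde\sigma)=\Phi^*(\sigma)$; hence every value $\|\Phi^*(\sigma)^{-1}\|$ arising from $\frak{C}_{\sub}(\cl S)$ already appears for some pair in $\frak{C}(\cl S)$, and the infimum cannot increase. This avoids both the appeal to Theorem~\ref{th_eta} and the weight bookkeeping, at the cost of nothing: the same dilation serves all three statements.
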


\begin{proof}
Set $d = \dim(H)$. 
Since $\frak{C}(\cl S)\subseteq \frak{C}_{\sub}(\cl S)$, we have that $\thet_{\sub}(\cl S) \subseteq \thet(\cl S)$.
Suppose that $T\in \thet(\cl S)$ and let $\Phi \in \frak{C}_{\sub}(\cl S)$. 
Write $\Phi(S) = \sum_{i=1}^m A_i S A_i^*$, $S\in M_d$, where the operators $A_1,\dots,A_m \in \cl L(H,K)$
for some (finite dimensional) Hilbert space $K$ satisfy $\sum_{i=1}^m A_i^* A_i \leq I$.
Set $B_0 = \left(I - \sum_{i=1}^m A_i^* A_i\right)^{1/2}$. 
Let $\tilde{K}$ be a Hilbert space containing $K$, $V : H\to \tilde{K}$ be an isometry with 
range orthogonal to $K$, and $A_0 = VB_0$. Considering the operators $A_i$ as acting from $H$ into $\tilde{K}$, 
we have that 
$$\sum_{i=0}^m A_i^* A_i = B_0 V^* V B_0 + \sum_{i=1}^m A_i^* A_i = I.$$
In addition, $A_i^* A_0 = A_i^* V B_0 = 0$ and $A_0^* A_i = B_0 V^* A_i = 0$ for all $i = 1,\dots,m$, while
$A_0^* A_0 = B_0^2 \in \cl S$. Thus, the mapping $\tilde{\Phi} : \cl L(H)\to \cl L(\tilde{K})$, given by 
$\tilde{\Phi}(S) = \sum_{i=0}^m A_i S A_i^*$, is a quantum channel. 
By assumption, 
$\Phi(T) \leq \tilde{\Phi}(T)\leq I$, showing that $T\in \thet_{\sub}(\cl S)$.
It follows that 
$\thet_{\sub}(\cl S) = \thet(\cl S)$ and 
$\theta_{\sub}(\cl S) = \theta(\cl S)$.

Since $\frak{C}(\cl S)\subseteq \frak{C}_{\sub}(\cl S)$, we have that $\hat{\theta}_{\sub}(\cl S) \leq \hat{\theta}(\cl S)$.
For a completely positive trace decreasing map $\Phi$, let 
$$\eta(\Phi) = \min\left\{\|\Phi(\rho)\| : \rho \mbox{ a state}\right\}.$$
By the proof of Theorem \ref{th_eta}, 
$$\hat{\theta}_{\sub}(\cl S)^{-1} = \sup\left\{\eta(\Phi) : \Phi\in \frak{C}_{\sub}(\cl S)\right\}.$$
Fix $\epsilon > 0$ and choose $\Phi\in \frak{C}_{\sub}(\cl S)$ such that 
$\eta(\Phi) > \hat{\theta}_{\sub}(\cl S)^{-1} - \epsilon$. 
Let $A_1,\dots,A_m$ be Kraus operators for $\Phi$. 
Fix positive real numbers $p_1,\dots,p_l$ such that $\sum_{r=1}^l p_r = 1$ and 
$p_r \leq \eta(\Phi)$, $r = 1,\dots,l$, and 
a state $\rho$ on $H$.
Let $K_1,\dots,K_l$ be Hilbert spaces of dimension $d$, 
$\tilde{K} = \oplus_{r=1}^l K_r$ and $V_r : H \to K_r$ be a unitary operator, $r = 1,\dots,l$. 
With $B_0$ as in the first paragraph, write $B_r = V_r B_0$, $r = 1,\dots,l$, and let 
$\hat{\Phi} : \cl L(H)\to \cl L(K\oplus \tilde{K})$ be given by 
$$\hat{\Phi}(S) = \sum_{i=1}^m A_i S A_i^* + \sum_{r=1}^l p_r B_r S B_r^*, \ \ \ S\in \cl L(H),$$
where the operators $A_i$ and $B_r$ are considered as acting from $H$ into $K\oplus \tilde{K}$. 
A straightforward verification shows that $\hat{\Phi}$ is a quantum channel in $\frak{C}(\cl S)$. 
Moreover, 
$$\left\|\hat{\Phi}(\sigma)\right\| = \left\|\Phi(\sigma) \oplus \left(\oplus_{r=1}^l p_r B_r \sigma B_r^*\right)\right\| = \left\|\Phi(\sigma)\right\|$$
for every state $\sigma$, and hence
$\eta(\hat{\Phi}) = \eta(\Phi)$.
After letting $\epsilon \to 0$, we conclude that $\hat{\theta}(\cl S) \leq \hat{\theta}_{\sub}(\cl S)$.
\end{proof}

For a non-commutative graph $\cl S$ in $M_d$ and $k\in \bb{N}$, let 
$$\frak{C}\mbox{}_k(\cl S) = 
\left\{\Phi : \cl L(H) \to M_k \ : \ \Phi \mbox{ is a quantum channel with } \cl S_{\Phi} \subseteq \cl S\right\},$$
$$\thet\mbox{}_k(\cl S) = \{T\in M_d^+ : \Phi(T)\leq I \mbox{ whenever } \Phi\in \frak{C}_k(\cl S)\}$$
and
$$\frak{P}\mbox{}_k(\cl S) = \{\Phi^*(\sigma) : \Phi\in \frak{C}_k(\cl S), \sigma\geq 0, \Tr(\sigma)\leq 1\}.$$
As in Lemma \ref{l_dualn}, one can see that 
\begin{equation}\label{eq_ksh}
\frak{P}_k(\cl S)^{\sharp} = \thet\mbox{}_k(\cl S).
\end{equation} 
Let $\theta_k(\cl S) = \theta(\thet_k(\cl S))$ 
and $\hat{\theta}_k(\cl S)$ be defined as $\hat{\theta}(\cl S)$
but using quantum channels in $\frak{C}_k(\cl S)$. 
It is clear that $\frak{C}_k(\cl S)\subseteq \frak{C}_{k+1}(\cl S)$, $\thet_{k+1}(\cl S)\subseteq \thet_k(\cl S)$, 
$$\theta(\cl S)\leq \theta_{k+1}(\cl S) \leq \theta_k(\cl S), \ 
\hat{\theta}(\cl S) \leq \hat{\theta}_{k+1}(\cl S)\leq \hat{\theta}_k(\cl S), \ \ \ k\in \bb{N},$$ and that
$\hat{\theta}(\cl S) = \lim_{k\to\infty} \hat{\theta}_k(\cl S)$.
Since $\thet(\cl S) = \cap_{k\in \bb{N}}\thet_k(\cl S)$, we have that 
$\thet(\cl S) = \lim_{k\in \bb{N}}\thet_k(\cl S)$ and, by Lemma \ref{l_conth}, 
$\theta(\cl S) = \lim_{k\to\infty} \theta_k(\cl S)$.
We will shortly see that the sequence $(\theta_k(\cl S))_{k\in \bb{N}}$ stabilises.

\begin{lemma}\label{l_convcom}
Let $\cl S$ be a non-commutative graph in $M_d$,
$k\in \bb{N}$, and $\Phi \in \frak{C}_k(\cl S)$. 
Assume that $\Phi = t \Phi_1 + (1-t)\Phi_2$, where $\Phi_1, \Phi_2 : M_d\to M_k$ are quantum channels and $t\in (0,1)$.
Then $\Phi_1,\Phi_2\in \frak{C}_k(\cl S)$.
\end{lemma}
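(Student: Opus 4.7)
The plan is to exploit the fact that the non-commutative graph $\cl S_\Phi$ of a quantum channel $\Phi$ is independent of the choice of Kraus representation, and then to build a single Kraus representation of $\Phi$ from Kraus representations of $\Phi_1$ and $\Phi_2$ in which the non-commutative graphs of $\Phi_1$ and $\Phi_2$ are visibly subspaces of $\cl S_\Phi$.

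First I would fix Kraus representations
\[
\Phi_1(T)=\sum_{p=1}^{m_1} B_p T B_p^*, \qquad \Phi_2(T)=\sum_{q=1}^{m_2} C_q T C_q^*,
\]
and form the combined representation of $\Phi$ using the operators $\{\sqrt{t}\,B_p\}_{p=1}^{m_1}\cup\{\sqrt{1-t}\,C_q\}_{q=1}^{m_2}$, which are easily checked to give a valid Kraus representation of $t\Phi_1+(1-t)\Phi_2=\Phi$. Computing all products of the form (Kraus)$^*$(Kraus) in this representation, one obtains the three families
\[
t\,B_p^*B_{p'},\qquad (1-t)\,C_q^*C_{q'},\qquad \sqrt{t(1-t)}\,B_p^*C_q,
\]
whose span is, by definition, $\cl S_\Phi$. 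In particular
\[
\cl S_{\Phi_1}=\operatorname{span}\{B_p^*B_{p'}\}\subseteq \cl S_\Phi,\qquad \cl S_{\Phi_2}=\operatorname{span}\{C_q^*C_{q'}\}\subseteq \cl S_\Phi.
\]

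Since $\Phi\in\frak{C}_k(\cl S)$ we have $\cl S_\Phi\subseteq \cl S$, so the inclusions above yield $\cl S_{\Phi_1}\subseteq \cl S$ and $\cl S_{\Phi_2}\subseteq \cl S$. Because $\Phi_1$ and $\Phi_2$ are quantum channels with codomain $M_k$ by hypothesis, this is exactly what is needed to conclude $\Phi_1,\Phi_2\in\frak{C}_k(\cl S)$.

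There is no real obstacle; the only conceptual point to flag is the well-known fact that $\cl S_\Phi$ does not depend on the Kraus representation (this is used implicitly when we identify the span produced from our particular combined representation with the intrinsic $\cl S_\Phi$). This independence follows from the standard relation between any two Kraus families by a partial isometry, and is cited in the excerpt after equation (\ref{eq_kr}).
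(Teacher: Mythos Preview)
Your proof is correct and follows essentially the same approach as the paper: both build a Kraus representation of $\Phi$ from those of $\Phi_1$ and $\Phi_2$, invoke the Kraus-independence of $\cl S_\Phi$, and read off the inclusions $\cl S_{\Phi_1},\cl S_{\Phi_2}\subseteq\cl S_\Phi\subseteq\cl S$. The only cosmetic difference is that you write out the cross terms $\sqrt{t(1-t)}\,B_p^*C_q$ explicitly, whereas the paper simply notes that the diagonal products already lie in $\cl S$.
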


\begin{proof}
Suppose that $\{A_i\}_{i=1}^m$ and $\{B_p\}_{p=1}^l$ are families of Kraus operators for $\Phi_1$ and $\Phi_2$,
respectively. Then $\left\{\sqrt{t}A_i, \sqrt{1-t}B_p : i\in [m], p\in [l]\right\}$ is a family of Kraus operators for $\Phi$. 
Since $\cl S_{\Phi}$ is independent of the Kraus representation of $\Phi$, we have, in particular, that 
$A_i^*A_j \in \cl S$ and $B_p^*B_q \in \cl S$ for all $i,j\in [m]$ and all $p,q\in [l]$. 
Thus, $\cl S_{\Phi_1}\subseteq \cl S$ and $\cl S_{\Phi_2}\subseteq \cl S$.
\end{proof}

If $k\in \bb{N}$, let $\cl E_k$ be the set of all extreme points in the convex set 
of all quantum channels from $M_d$ to $M_k$.

\begin{proposition}\label{l_stable}
Let $\cl S$ be a non-commutative graph in $M_d$ and $k\geq d^2$. 
Then 
$$\thet\mbox{}_{d^2}(\cl S)  = \thet\mbox{}_k(\cl S) 
= \left\{T\in M_d^+ : \Psi(T)\leq I \mbox{ for all } \ \Psi\in \frak{C}_k(\cl S) \cap \cl E_k\right\}.$$
Thus, $\thet(\cl S) = \thet_{d^2}(\cl S)$ and $\theta(\cl S) = \theta_{d^2}(\cl S)$.
\end{proposition}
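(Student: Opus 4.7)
The plan is to prove the three displayed sets are equal in three steps. First, I would establish the second equality for every $k$. The inclusion $\subseteq$ is immediate. For $\supseteq$, by Lemma \ref{l_convcom} the set $\frak{C}_k(\cl S)$ is a face of the (compact convex) set of all quantum channels from $M_d$ to $M_k$, so its extreme points (as a subset of that ambient convex set) coincide with $\frak{C}_k(\cl S)\cap \cl E_k$. By Carath\'eodory's theorem, every $\Phi\in \frak{C}_k(\cl S)$ is a convex combination of elements $\Psi_j\in \frak{C}_k(\cl S)\cap \cl E_k$; consequently $\Psi_j(T)\leq I$ for all $j$ forces $\Phi(T)\leq I$, yielding $\supseteq$.

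Second, and this is where the main content lies, I would prove $\thet_{d^2}(\cl S)\subseteq \thet_k(\cl S)$ for $k\geq d^2$. Let $T\in \thet_{d^2}(\cl S)$ and $\Psi\in \frak{C}_k(\cl S)\cap \cl E_k$. Here I would invoke Choi's characterisation of extreme quantum channels: $\Psi$ admits a Kraus representation $\Psi(S)=\sum_{i=1}^m A_i S A_i^*$ for which the family $\{A_i^*A_j\}_{i,j=1}^m$ is linearly independent in $M_d$; in particular $m^2\leq d^2$, so $m\leq d$. Let $V\subseteq \bb{C}^k$ be the span of the ranges of $A_1,\dots,A_m$; then $\dim V\leq md\leq d^2$ and $\Psi(S)\in \cl L(V)$ for every $S\in M_d$. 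Fix an isometry $W:V\to \bb{C}^{d^2}$ and set $\tilde\Psi(S)=W\Psi(S)W^*$, giving a quantum channel $\tilde\Psi:M_d\to M_{d^2}$ with Kraus operators $WA_i$. Since $(WA_i)^*(WA_j)=A_i^*A_j$, we have $\cl S_{\tilde\Psi}=\cl S_\Psi\subseteq \cl S$, so $\tilde\Psi\in \frak{C}_{d^2}(\cl S)$. By hypothesis $\tilde\Psi(T)\leq I_{d^2}$, so $\|\Psi(T)\|=\|\tilde\Psi(T)\|\leq 1$, and since $\Psi(T)\geq 0$ we conclude $\Psi(T)\leq I_k$. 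By the first step, $T\in \thet_k(\cl S)$.

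Third, the reverse inclusion $\thet_k(\cl S)\subseteq \thet_{d^2}(\cl S)$ for $k\geq d^2$ is routine, since any $\Phi\in \frak{C}_{d^2}(\cl S)$ composes with the isometric embedding $M_{d^2}\hookrightarrow M_k$ to yield a channel in $\frak{C}_k(\cl S)$ with the same non-commutative graph and the same norm on $\Phi(T)$. The stability claim $\thet(\cl S)=\thet_{d^2}(\cl S)$ then follows from $\thet(\cl S)=\bigcap_k \thet_k(\cl S)$, and $\theta(\cl S)=\theta_{d^2}(\cl S)$ is immediate by taking the supremum of the trace. The main obstacle is the compression argument in the second step; the essential input is Choi's bound $m\leq d$, since without it one would only control $\dim \Psi(M_d)\leq d^2$ as a subspace of $M_k$, rather than being able to place $\Psi(M_d)$ inside a \emph{subalgebra} $\cl L(V)$ with $\dim V\leq d^2$, and it is the latter that is needed in order to compare $\|\Psi(T)\|$ with the norm of its image under an isometric compression.
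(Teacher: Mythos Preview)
Your proposal is correct and follows essentially the same approach as the paper's proof: reduce to extreme channels via Lemma~\ref{l_convcom}, invoke Choi's theorem to bound the number of Kraus operators by $d$, and compress the range of an extreme channel into $M_{d^2}$. The only differences are expository --- you phrase the first step in terms of $\frak{C}_k(\cl S)$ being a face and invoke Carath\'eodory explicitly, and you spell out the isometric compression $W$ in the second step --- whereas the paper is slightly terser on both points.
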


\begin{proof}
Suppose that $T\in M_d^+$ has the property that  
\begin{equation}\label{eq_ene}
\Psi(T) \leq I \ \mbox{ whenever } \ \Psi\in \frak{C}_k(\cl S) \cap \cl E_k.
\end{equation}
If $\Phi\in \frak{C}_k(\cl S)$, write $\Phi = \sum_{p=1}^l t_p \Phi_p$ as a convex combination, 
where $\Phi_p\in \cl E_k$, $p = 1,\dots,l$. 
By Lemma \ref{l_convcom}, $\Phi_p\in \frak{C}_k(\cl S)$. By assumption (\ref{eq_ene}), 
$$\Phi(T) = \sum_{p=1}^l t_p \Phi_p(T) \leq I.$$
Thus, 
\begin{equation}\label{eq_kext}
\thet\mbox{}_k(\cl S) = \left\{T\in M_d^+ : \Psi(T)\leq I \mbox{ for all } \ \Psi\in \frak{C}_k(\cl S) \cap \cl E_k\right\}.
\end{equation}

Fix $T\in \thet_{d^2}(\cl S)$, and
suppose that $\Psi\in \frak{C}_k(\cl S) \cap \cl E_k$. By \cite[Theorem 5]{choi}, 
there exists a Kraus representation 
$\Psi(S) = \sum_{i=1}^m A_i S A_i^*$, $S\in M_d$, such that the set $\{A_i^*A_j : i,j\in [m]\}$ is linearly independent. 
Thus, $m\leq d$. Let $P$ be the projection in $M_k$ onto the span of the ranges of the operators
$A_1,\dots,A_m$; then ${\rm rank}(P) \leq d^2$, and therefore $\Psi$ can be considered as a quantum channel 
into $M_{d^2}$. By assumption, $\Psi(T)\leq I$. 
By (\ref{eq_kext}), $T\in \thet_k(\cl S)$. 
Thus, $\thet_{d^2}(\cl S) \subseteq \thet_k(\cl S)$, and since 
$\thet_{k}(\cl S) \subseteq \thet_{d^2}(\cl S)$ trivially, we conclude that 
$\thet_{d^2}(\cl S) = \thet_k(\cl S)$. Thus,
$\thet(\cl S) = \thet_{d^2}(\cl S)$ and $\theta(\cl S) = \theta_{d^2}(\cl S)$.
\end{proof}

By Corollary \ref{c_ast} and Theorem \ref{p_theta1},
\begin{equation}\label{eq_1leq}
1\leq \theta(\cl S)\leq \hat{\theta}(\cl S) 
\end{equation}
for any non-commutative graph $\cl S$. 
We now use Proposition \ref{l_stable} to characterise when equalities hold in (\ref{eq_1leq}).

\begin{proposition}\label{p_1c}
Let $\cl S\subseteq M_d$ be an operator system. The following are equivalent:

(i) \ \ $\theta(\cl S) = 1$;

(ii) \ $\hat{\theta}(\cl S) = 1$;

(iii) $\cl S = M_d$.
\end{proposition}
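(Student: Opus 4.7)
The plan is to establish the cycle $(\text{iii}) \Rightarrow (\text{ii}) \Rightarrow (\text{i}) \Rightarrow (\text{iii})$. The first two implications are cheap. For $(\text{iii}) \Rightarrow (\text{ii})$, I would take the trace channel $\Phi : M_d \to M_1 \cong \bb{C}$, $\Phi(T) = \Tr(T)$, for which $\cl S_\Phi \subseteq M_d = \cl S$ automatically; choosing the state $\sigma = 1 \in M_1$ gives $\Phi^*(\sigma) = I_d$, hence $\|\Phi^*(\sigma)^{-1}\| = 1$ and $\hat\theta(\cl S) \le 1$, which combined with (\ref{eq_1leq}) forces equality. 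The implication $(\text{ii}) \Rightarrow (\text{i})$ is immediate from $\theta(\cl S) \le \hat\theta(\cl S)$, established in Theorem \ref{p_theta1}, together with $\theta(\cl S) \ge 1$.

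The content lies in $(\text{i}) \Rightarrow (\text{iii})$. I would apply the first inequality of Theorem \ref{p_theta1}: $\theta(\cl S) = 1$ forces $\sup_{\Phi \in \frak{C}(\cl S)} \|\Phi(I_d)\| \ge d$, while the reverse $\le d$ always holds because $\Tr(\Phi(I_d)) = d$. By Proposition \ref{l_stable} one may restrict attention to $\Phi \in \frak{C}_{d^2}(\cl S)$, which is a compact set, so the supremum is attained by some $\Phi_0 \in \frak{C}(\cl S)$ with $\|\Phi_0(I_d)\| = \Tr(\Phi_0(I_d)) = d$. Since a positive operator whose norm equals its trace is of rank one, $\Phi_0(I_d) = d\,\eta\eta^*$ for some unit vector $\eta$, and this rigidity propagates to any Kraus representation $\Phi_0(T) = \sum_i A_i T A_i^*$: each $A_i A_i^*$ must have range in $\bb{C}\eta$, so $A_i = \eta\, b_i^*$ for some vectors $b_i \in \bb{C}^d$.

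With this shape of the Kraus operators, $A_i^* A_j = b_i b_j^*$ and the trace-preservation condition $\sum_i A_i^* A_i = I_d$ becomes $\sum_i b_i b_i^* = I_d$. Assembling the $b_i$ as the columns of a matrix $B$, we have $BB^* = I_d$, so $B$ is a coisometry and the map $X \mapsto BXB^*$ from $M_m$ to $M_d$ is surjective. Therefore
\[
\cl S_{\Phi_0} = \mathrm{span}\{A_i^* A_j : i,j\} = \mathrm{span}\{b_i b_j^* : i,j\} = B M_m B^* = M_d,
\]
and since $\cl S_{\Phi_0} \subseteq \cl S$ by hypothesis, we conclude $\cl S = M_d$. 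The main obstacle is the attainment of the supremum $\sup_\Phi \|\Phi(I_d)\| = d$; handling it requires compactness of $\frak{C}_{d^2}(\cl S)$, which in turn needs that the constraint $\cl S_\Phi \subseteq \cl S$ is closed under limits of channels --- this is verified by extracting a convergent subsequence of Kraus operators (bounded since $\sum_i A_i^*A_i = I_d$) and using closedness of $\cl S$.
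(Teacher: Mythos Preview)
Your proof is correct and follows essentially the same route as the paper: the implications (iii)$\Rightarrow$(ii)$\Rightarrow$(i) are immediate, and for (i)$\Rightarrow$(iii) both you and the paper use Proposition~\ref{l_stable} and compactness of $\frak{C}_{d^2}(\cl S)$ to find a channel $\Phi_0\in\frak{C}(\cl S)$ with $\Phi_0(I_d)$ of rank one, then read off $\cl S_{\Phi_0}=M_d$ from the resulting Kraus form $A_i=\eta b_i^*$. The only cosmetic differences are that the paper argues closedness of $\frak{C}_{d^2}(\cl S)$ via complementary channels (Lemma~\ref{l_closed}) rather than Kraus-operator limits, and phrases the final step as ``$\Phi_0(\sigma)=uu^*$ for every state $\sigma$'' rather than via the coisometry $B$.
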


\begin{proof}
(ii)$\Rightarrow$(i) follows from (\ref{eq_1leq}). 

(i)$\Rightarrow$(ii)
Suppose that $\hat{\theta}(\cl S) > 1$. 
Let $\delta \in \left(0,\frac{1}{d}\right)$ be such that $\hat{\theta}(\cl S)^{-1} <1-\delta$.
By Theorem \ref{th_eta}, for each $\Phi \in \mathfrak C(\cl S)$ there exists a state 
$\tau$ with $\| \Phi(\tau)\| <1- \delta$.  
Note that $\sigma:= \frac{1}{d-1} (I- \tau)$ is a state and 
$\frac{1}{d} I_d = \frac{1}{d} \tau + \left(1- \frac{1}{d}\right)\sigma$.  
Thus, 
$$\left\|\Phi\left(\frac{1}{d} I_d\right)\right\| \le \frac{1}{d} \| \Phi(\tau)\| + \left(1- \frac{1}{d}\right)\|\Phi( \sigma)\| 
< \frac{1-\delta}{d} +  \left(1- \frac{1}{d}\right) = 1- \frac{\delta}{d}.$$  
Setting $T = \frac{1}{d - \delta}I_d$, we have that $T \ge 0$ and $\|\Phi(T)\| \le 1$ for all $\Phi \in \mathfrak C(\cl S)$;
thus, $T \in \thet(\cl S)$
and so $\theta (\cl S) \ge \Tr (T) = \frac{d}{d - \delta} >1$. 

(iii)$\Rightarrow$(i) is straightforward. 

(i)$\Rightarrow$(iii) 
Set $k = d^2$. 
Following the proof of Theorem \ref{th_eta} (ii), one can see that 
\begin{equation}\label{eq_thk}
\theta\mbox{}_{k}(\cl S) =  \sup\left\{\inf\left\{\|\Phi(\rho)\|^{-1} : \Phi \in \mathfrak C_k(\cl S)\right\} : \rho \mbox{ a state}\right\}.
\end{equation}
For each quantum channel $\Phi : M_d\to M_k$ and each state $\rho$, we have 
$\|\Phi(\rho)\|^{-1} \geq 1$; thus, 
(\ref{eq_thk}) and Proposition \ref{l_stable} imply that
$$\inf\left\{\left\|\Phi(\rho)\right\|^{-1} : \Phi \in \mathfrak C_k(\cl S)\right\} = 1, \ \ \mbox{ for each state } \rho \mbox{ on } \bb{C}^d.$$
By Lemma \ref{l_closed} below, there exists $\Phi\in \mathfrak C_k(\cl S)$ such that 
$\left\|\Phi\left(\frac{1}{d}I_d\right)\right\| = 1$. 
It follows that there exists a unit vector $u\in \bb{C}^d$ such that $\Phi\left(\frac{1}{d}I_d\right) = uu^*$. 
The fact that the state $uu^*$ is pure now implies that $\Phi(\sigma) = uu^*$ for every pure state, and hence for every
state $\sigma$, on $\bb{C}^d$. 
Thus, $\Phi(T) = \sum_{i=1}^d (ue_i^*) T (e_i u^*)$, $T\in M_d$, and so 
$\cl S_{\Phi} = M_d$; since $\Phi\in \mathfrak C_k(\cl S)$, 
we have that $\cl S = M_d$.
\end{proof}

\noindent {\bf Remark }
Let $\cl S\subseteq M_d$ be a non-commutative graph. 
In view of Theorem \ref{th_sand} and
the fact that $\cp(\cl S)^{\sharp} \subseteq \fp(\cl S)^{\sharp}$, it is natural to ask if
the stronger inclusion $\thet(\cl S)\subseteq \cp(\cl S)^{\sharp}$ holds. 
The answer to this question is negative; indeed, 
$\{e_1,e_2\}$ is a clique for $\cl S_2$ and thus $I\in \cp(\cl S_2)$. Hence
$\cp(\cl S_2)^{\sharp} \subseteq \{T\in M_2^+ : \Tr(T) \leq 1\}$.
On the other hand, $e_1e_1^*$ and $e_2e_2^*$ are $\cl S_2$-abelian projections and so, 
by Theorem \ref{th_apcp}, $\cp(\cl S_2)^{\sharp} = \{T\in M_2^+ : \Tr(T) \leq 1\}$. 
It follows that $\kappa(\cl S_2) = 1$. 
On the other hand, by Proposition \ref{p_1c}, $\theta(\cl S_2) > 1$.
It follows that $\thet(\cl S_2)\not\subseteq \cp(\cl S_2)^{\sharp}$.

\subsection{Continuity}\label{ss_con}
In this subsection, we establish some continuity properties and exhibit a bound on the output system 
required for computing $\theta(\cl S)$. 
We use a classical concept of convergence due to Kuratowski. 
Let $\cl X$ be a topological space. For a sequence $(F_n)_{n\in \bb{N}}$ of subsets of $\cl X$, 
set 
$$\liminf_{n\in \bb{N}} F_n = 
\left\{\lim\mbox{}_{n\to\infty} x_n : 
(x_n)_{n\in \bb{N}} \in \Pi_{n\in \bb{N}}  F_n
\mbox{ a convergent sequence}\right\}$$
and 
$$\limsup_{n\in \bb{N}} F_n = 
\left\{x :  \mbox{ a cluster point of a sequence }
(x_n)_{n\in \bb{N}} \in \Pi_{n\in \bb{N}}  F_n\right\}.$$
We say that the sequence $(F_n)_{n\in \bb{N}}$ converges to the subset $F\subseteq \cl X$, 
and write $F = \lim_{n\to\infty} F_n$, 
if $F = \liminf_{n\in \bb{N}} F_n = \limsup_{n\in \bb{N}} F_n$.

\begin{lemma}\label{l_conth}
Let $(\cl A_n)_{n\in \bb{N}}$ be a sequence of convex corners in $M_d$ such that $\cup_{n\in \bb{N}}\cl A_n$ is bounded.
Assume that $\lim_{n\to\infty} \cl A_n = \cl A$ for some convex corner $\cl A\subseteq M_d$.
Then $\theta(\cl A_n) \to_{n\to\infty} \theta(\cl A)$.
\end{lemma}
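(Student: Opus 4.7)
The plan is to prove the two inequalities $\liminf_{n\to\infty}\theta(\cl A_n)\ge\theta(\cl A)$ and $\limsup_{n\to\infty}\theta(\cl A_n)\le\theta(\cl A)$ separately, exploiting (i) that each $\cl A_n$ and $\cl A$ are closed and bounded so the trace functional attains its supremum on each, and (ii) the Kuratowski characterisations of $\liminf$ and $\limsup$ given in the text.

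For the lower bound, I would fix $A\in\cl A$ with $\Tr(A)=\theta(\cl A)$; such an $A$ exists because $\cl A$ is a closed bounded subset of $M_d^+$ and $\Tr$ is continuous. Using $\cl A\subseteq\liminf_{n}\cl A_n$, I would pick a sequence $A_n\in\cl A_n$ with $A_n\to A$. Continuity of the trace then gives $\theta(\cl A_n)\ge\Tr(A_n)\to\Tr(A)=\theta(\cl A)$, hence $\liminf_{n}\theta(\cl A_n)\ge\theta(\cl A)$.

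For the upper bound, for each $n$ pick $A_n\in\cl A_n$ with $\Tr(A_n)=\theta(\cl A_n)$, again by compactness. Since $\cup_n\cl A_n$ is bounded, the sequence $(A_n)$ has a convergent subsequence. I would extract a subsequence $(A_{n_k})$ that simultaneously achieves $\limsup_{n}\theta(\cl A_n)=\lim_{k}\theta(\cl A_{n_k})$ and converges in $M_d$ to some $A^\star$. Then $A^\star$ is a cluster point of a sequence in $\Pi_n\cl A_n$, so $A^\star\in\limsup_{n}\cl A_n=\cl A$. Continuity of the trace gives $\limsup_{n}\theta(\cl A_n)=\Tr(A^\star)\le\theta(\cl A)$.

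Combining the two bounds yields $\lim_{n\to\infty}\theta(\cl A_n)=\theta(\cl A)$. I do not anticipate any genuine obstacle: the argument is essentially that the functional $F\mapsto\sup_{x\in F}\Tr(x)$ is continuous with respect to Kuratowski convergence on uniformly bounded closed sets in $M_d^+$, and the only ingredients are compactness (to realise the suprema) and continuity of $\Tr$. The one subtle point worth stating explicitly is the simultaneous extraction of a subsequence realising both the $\limsup$ of the $\theta$-values and the convergence of the maximisers $A_n$; this is standard, but I would spell it out to avoid any confusion.
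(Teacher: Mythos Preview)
Your proposal is correct and follows essentially the same approach as the paper's proof: both directions are handled by picking maximisers via compactness and then using the Kuratowski $\liminf$/$\limsup$ hypotheses together with continuity of $\Tr$. The paper's argument is slightly terser (it starts the upper-bound step directly with an arbitrary convergent subsequence of $(\theta(\cl A_n))$ rather than explicitly discussing the simultaneous extraction), but the substance is identical.
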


\begin{proof}
Suppose that a subsequence $(\theta(\cl A_{n_m}))_{m\in \bb{N}}$ converges to $\delta$. 
Let $A_m\in \cl A_{n_m}$ be such that $\theta(\cl A_{n_m}) = \Tr(A_m)$, $m\in \bb{N}$. 
We may assume, without loss of generality, that $A_m\to_{m\to\infty} A$ for some $A\in M_d$. 
By assumption, $A\in \cl A$ and hence $\theta(\cl A) \geq \Tr(A) = \delta$. 
Thus, $\limsup_{n\in \bb{N}} \theta(\cl A_n) \leq \theta(\cl A)$.

Let $A\in \cl A$ be such that $\Tr(A) = \theta(\cl A)$. By assumption, 
there exists a sequence $(A_n)_{n\in \bb{N}}$ such that $A = \lim_{n\to\infty} A_n$. 
Then $\theta(\cl A)  = \lim_{n\to\infty} \Tr(A_n) \leq \liminf_{n\in \bb{N}} \theta(\cl A_n)$.
\end{proof}

\begin{lemma}\label{l_anan}
Let $\cl A, \cl A_n \subseteq M_n^+$, $n\in \bb{N}$, be non-empty sets 
such that $\cup_{n\in \bb{N}}\cl A_n$ is bounded 
and $\limsup_{n\in \bb{N}} \cl A_n \subseteq \cl A$. 
Then $\cl A^{\sharp}\subseteq \liminf_{n\in \bb{N}} \cl A_n^{\sharp}$.
\end{lemma}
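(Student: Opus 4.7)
The plan is to show that every $B \in \cl A^{\sharp}$ can be approximated by a sequence $(B_n)$ with $B_n \in \cl A_n^{\sharp}$; since $\cl A_n^{\sharp}$ is hereditary, the natural candidate is $B_n = \lambda_n B$ for scalars $\lambda_n \in [0,1]$ chosen as large as possible so that $\lambda_n B$ still anti-blocks $\cl A_n$, and then to argue $\lambda_n \to 1$.

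Concretely, I would fix $B \in \cl A^{\sharp}$, set
$$M_n = \sup\{\Tr(BA) : A \in \cl A_n\},$$
which is finite because $B$ is bounded and $\bigcup_n \cl A_n$ is bounded by hypothesis. Then I would define $\lambda_n = \min(1, 1/M_n)$ (with $\lambda_n = 1$ when $M_n = 0$). By construction $\lambda_n B \in \cl L(H)^+$ and $\Tr((\lambda_n B) A) = \lambda_n \Tr(BA) \leq \lambda_n M_n \leq 1$ for all $A \in \cl A_n$, so $\lambda_n B \in \cl A_n^{\sharp}$ and hence $\lambda_n B \to B$ would give $B \in \liminf_n \cl A_n^{\sharp}$.

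The key step is therefore to establish $\limsup_{n\to\infty} M_n \leq 1$. I would argue by contradiction: suppose that along some subsequence $M_{n_k} \to M > 1$. For each $k$, choose $A_{n_k} \in \cl A_{n_k}$ with $\Tr(BA_{n_k}) \geq M_{n_k} - 1/k$. By the boundedness hypothesis and finite-dimensional compactness, a further subsequence $(A_{n_{k_j}})$ converges to some $A \in M_d$. Extending $(A_{n_{k_j}})$ to a sequence $(x_n)_{n\in \bb{N}}$ by picking arbitrary elements $x_n \in \cl A_n$ at the unused indices (using that each $\cl A_n$ is non-empty), $A$ is a cluster point of $(x_n)$ and hence $A \in \limsup_n \cl A_n \subseteq \cl A$. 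Since $B \in \cl A^{\sharp}$, this forces $\Tr(BA) \leq 1$, while continuity of the trace gives $\Tr(BA) = \lim_j \Tr(BA_{n_{k_j}}) = M > 1$, a contradiction.

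The only delicate point to watch out for is the set-theoretic bookkeeping between the subsequence on which $A_{n_k}$ is defined and the full-sequence formulation of Kuratowski's $\limsup$; this is handled by the arbitrary extension step above, using non-emptiness of every $\cl A_n$. Everything else is routine compactness and continuity in finite dimensions, so I expect no serious obstacle beyond making sure the reduction to a clean sequence $(x_n) \in \Pi_n \cl A_n$ is explicit.
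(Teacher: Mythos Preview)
Your proposal is correct and follows essentially the same approach as the paper. Your quantity $M_n$ is exactly the paper's $\delta_{\cl A_n}(T)$, your scaled element $\lambda_n B$ plays the role of the paper's $\frac{p}{p+1}T$, and your compactness/cluster-point contradiction is the paper's Case~2; the only difference is organizational---the paper argues by contradiction and splits into two cases according to whether $\liminf_n \delta_{\cl A_n}(T)\le 1$, whereas you prove $\limsup_n M_n\le 1$ directly and then construct the approximating sequence in one stroke.
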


\begin{proof}
For a bounded set $\cl B\subseteq M_d^+$ and an operator $T\in M_d^+$, let 
$$\delta_{\cl B}(T) = \sup\{\langle T,B\rangle : B\in \cl B\}.$$
Suppose that $T\in \cl A^{\sharp}$ and that $T\not\in \liminf_{n\in \bb{N}}\cl A_n^{\sharp}$. 
After passing to a subsequence if necessary, we assume that there exists $\delta > 0$ such that
\begin{equation}\label{eq_dist}
\left\|T - B\right\| \geq \delta, \ \ \  B\in \cl A_n^{\sharp}, \ n\in \bb{N}.
\end{equation}
We consider two cases.

\smallskip

\noindent {\it Case 1.} $\liminf_{n\in \bb{N}}\delta_{\cl A_n}(T) \leq 1$. 
In this case, there exists an increasing sequence $(n_p)_{p\in \bb{N}} \subseteq \bb{N}$ such that 
$\delta_{\cl A_{n_p}}(T) < 1 + \frac{1}{p}$, $p\in \bb{N}$. 
Thus, $\delta_{\cl A_{n_p}}\left(\frac{p}{p+1}  T\right) < 1$, 
and hence $\frac{p}{p+1}  T \in \cl A_{n_p}^{\sharp}$, $p\in \bb{N}$. 
Since $\frac{p}{p+1}  T \to_{p\to\infty} T$, this contradicts (\ref{eq_dist}). 

\smallskip

\noindent {\it Case 2.} $\liminf_{n\in \bb{N}}\delta_{\cl A_n}(T) > 1$. 
In this case, there exists $c > 1$ and $n_0\in \bb{N}$ such that $\delta_{\cl A_n}(T) \geq c$ for all $n\geq n_0$. 
Thus, there exists $A_n\in \cl A_n$ such that $\langle T,A_n\rangle \geq \frac{c+1}{2}$, $n\geq n_0$. 
Let $A$ be a cluster point of the sequence $(A_n)_{n\geq n_0}$. 
By assumption, $A\in \cl A$. Thus
$$1 \geq \langle T,A\rangle = \lim_{n\to \infty} \langle T,A_n\rangle \geq \frac{c+1}{2},$$
a contradiction. 
\end{proof}

In the sequel, we consider the operator systems in $M_d$ as closed subsets of the topological space $M_d$.

\begin{lemma}\label{l_closed}
Let $H$ be a finite dimensional Hilbert space, 
$\cl S, \cl S_n\subseteq \cl L(H)$, $n\in \bb{N}$, be operator systems and $k\in \bb{N}$. 
If $\limsup_{n\in \bb{N}} \cl S_n \subseteq \cl S$ then $\limsup_{n\in \bb{N}}$ $\frak{C}_k(\cl S_n)\subseteq \frak{C}_k(\cl S)$.
\end{lemma}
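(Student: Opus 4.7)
The plan is to exploit the fact that, once the output dimension $k$ is fixed, any quantum channel $\Phi : \cl L(H)\to M_k$ admits a Kraus representation with a uniformly bounded number of Kraus operators, all of uniformly bounded norm. This will let us pass to a convergent subsequence of Kraus data and transfer the inclusion from $\cl S_n$ to $\cl S$.

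First, I would unpack the definition of $\limsup$. Suppose $\Phi\in\limsup_{n}\frak{C}_k(\cl S_n)$. Then there is a sequence $\Phi_n\in\frak{C}_k(\cl S_n)$ and a subsequence (which, after reindexing, I will still call $(\Phi_n)$) with $\Phi_n\to\Phi$ in $\cl L(\cl L(H),M_k)$. Since the set of quantum channels $\cl L(H)\to M_k$ is closed in this finite-dimensional space, $\Phi$ is itself a quantum channel; it remains to show $\cl S_{\Phi}\subseteq \cl S$.

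Next, set $m = d\cdot k$ where $d=\dim H$, and recall (via the Choi-Jamio\l{}kowski correspondence, whose Choi matrix has rank at most $m$) that every channel $\Phi_n$ has a Kraus representation $\Phi_n(T)=\sum_{p=1}^{m}A_p^{(n)}TA_p^{(n)*}$ with exactly $m$ operators (padding with zeros if necessary). From $\sum_p A_p^{(n)*}A_p^{(n)}=I$ one reads $\|A_p^{(n)}\|\le 1$ for all $p,n$. Finite-dimensional compactness yields a subsequence along which $A_p^{(n)}\to A_p$ for every $p\in[m]$ simultaneously. Passing to the limit in the Kraus sum gives $\Phi(T)=\sum_{p=1}^{m}A_pTA_p^*$, so $\{A_p\}_{p=1}^m$ is a Kraus representation of $\Phi$, and consequently $\cl S_{\Phi}=\operatorname{span}\{A_p^*A_q : p,q\in[m]\}$ by the representation-independence of $\cl S_{\Phi}$ noted earlier in the paper.

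Finally, I would feed each pair $(p,q)$ into the hypothesis $\limsup_n\cl S_n\subseteq\cl S$. For fixed $p,q$, the element $A_p^{(n)*}A_q^{(n)}$ lies in $\cl S_n$ (since $\cl S_{\Phi_n}\subseteq\cl S_n$), and it converges along our subsequence to $A_p^*A_q$. Define a sequence $(T_n)_{n\in\bb{N}}$ with $T_n\in\cl S_n$ by setting $T_n=A_p^{(n)*}A_q^{(n)}$ on the subsequence and $T_n=0$ otherwise (using $0\in\cl S_n$); then $A_p^*A_q$ is a cluster point of $(T_n)$, hence lies in $\limsup_n\cl S_n\subseteq\cl S$. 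Therefore $\cl S_{\Phi}\subseteq\cl S$ and $\Phi\in\frak{C}_k(\cl S)$, completing the proof.

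The only delicate step is ensuring a uniform bound on the number of Kraus operators so that one may diagonally extract a convergent subsequence of Kraus data; this is precisely where the fixed output dimension $k$ is essential (without it, the number of Kraus operators could grow with $n$ and no obvious diagonal argument would apply).
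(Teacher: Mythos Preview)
Your proof is correct, but it follows a genuinely different route from the paper. The paper argues via complementary channels: it invokes the identity $\cl S_{\Phi} = \ran(\tilde{\Phi}^*)$ from \cite{dsw}, together with the Kretschmann--Schlingemann--Werner continuity theorem \cite{ksw}, which guarantees that $\Phi_n\to\Phi$ forces $\tilde{\Phi}_n^*\to\tilde{\Phi}^*$ in norm (after arranging a common environment $\tilde{K}$). Every element of $\cl S_{\Phi}$ is then of the form $\tilde{\Phi}^*(R)=\lim_n\tilde{\Phi}_n^*(R)$ with $\tilde{\Phi}_n^*(R)\in\cl S_n$, so it lands in $\limsup_n\cl S_n\subseteq\cl S$.

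Your argument is more elementary: you bypass the Stinespring/complementary-channel machinery entirely by extracting a convergent subsequence of Kraus data directly, which is possible precisely because the fixed output dimension $k$ gives a uniform bound $m=dk$ on the number of Kraus operators. This yields the same conclusion from first principles and compactness alone, without citing \cite{ksw}. The paper's route is shorter once one has the cited results in hand and is perhaps more conceptual (it identifies $\cl S_{\Phi}$ as a range rather than a span); your route is self-contained and makes explicit where the bound on $k$ enters. One small expositional point: your two rounds of ``reindexing'' (first to make $\Phi_n\to\Phi$, then to make the Kraus operators converge) muddle the relationship between the index $n$ and the ambient system $\cl S_n$; your final paragraph unwinds this correctly by padding with zeros off the subsequence, but it would read more cleanly to keep the original indices throughout and work with an explicit sub-subsequence $(n_i)_i$.
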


\begin{proof}
Without loss of generality, assume that 
$(\Phi_n)_{n\in \bb{N}} \subseteq \frak{C}_k(\cl S_n)$ is a sequence, and $\Phi$ is a quantum channel, 
such that 
$\Phi_n\to_{k\to\infty} \Phi$. 
Let $\tilde{\Phi}_n$ (resp. $\tilde{\Phi}$) be the complementary channel 
\cite{h} of $\Phi_n$
(resp. $\Phi$), $n\in \bb{N}$, acting from $\cl L(H)$ into $\cl L(\tilde{K})$, for some Hilbert space $\tilde{K}$
that can be chosen to be independent of $n$. 
By \cite{dsw}, $\cl S_{\Phi} = {\rm ran}(\tilde{\Phi}^*)$ and $\cl S_{\Phi_n} = {\rm ran}(\tilde{\Phi}_n^*)$, $n\in \bb{N}$.
By \cite{ksw}, 
$$\|\tilde{\Phi}_n^* - \tilde{\Phi}^*\| \to_{n\to\infty} 0.$$
Thus, if $R\in \cl L(\tilde{K})$ then $\tilde{\Phi}^*(R) = \lim_{n\to\infty} \tilde{\Phi}_n^*(R)$.
Since $\Phi_n \in \frak{C}_k(\cl S_n)$ for each $n$, 
we have $\tilde{\Phi}^*(R) \in \cl S$ and therefore $\cl S_{\Phi} \subseteq \cl S$.
\end{proof}

\begin{theorem}\label{th_contthet}
Let $k\in \bb{N}$ and 
$\cl S, \cl S_n$, $n\in \bb{N}$, be non-commutative graphs in $M_d$ such that 
$\cl S = \lim_{n\to\infty} \cl S_n$.
Then $\thet(\cl S) = \lim_{n\to\infty} \thet(\cl S_n)$ and $\theta(\cl S_n)$ $\to_{n\to \infty} \theta(\cl S)$.
\end{theorem}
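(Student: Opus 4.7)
The plan is to reduce to output dimension $k = d^2$ via Proposition~\ref{l_stable}, establish Kuratowski convergence $\thet(\cl S_n) \to \thet(\cl S)$ by proving both inclusions, and then invoke Lemma~\ref{l_conth} to conclude $\theta(\cl S_n) \to \theta(\cl S)$. Set $k = d^2$, so by Proposition~\ref{l_stable} one has $\thet(\cl S') = \thet_k(\cl S')$ for every non-commutative graph $\cl S' \subseteq M_d$. The set of quantum channels $M_d \to M_k$ is compact, and every $\thet(\cl S')$ lies in $\{T \geq 0 : T \leq I_d\}$ (test against the identity channel embedded into $M_k$).

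For $\thet_k(\cl S) \subseteq \liminf_n \thet_k(\cl S_n)$, let $T \in \thet_k(\cl S)$ and put $s_n = \max\{\|\Phi(T)\| : \Phi \in \frak{C}_k(\cl S_n)\}$, attained by compactness. A subsequence of maximizers converges to some $\Phi \in \frak{C}_k(\cl S)$ by Lemma~\ref{l_closed}, so $\limsup_n s_n \leq \|\Phi(T)\| \leq 1$; setting $\lambda_n = \min(1, s_n^{-1}) \to 1$ then gives $\lambda_n T \in \thet_k(\cl S_n)$, whence $T \in \liminf_n \thet_k(\cl S_n)$.

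For $\limsup_n \thet_k(\cl S_n) \subseteq \thet_k(\cl S)$, take $T_{n_p} \in \thet_k(\cl S_{n_p})$ with $T_{n_p} \to T$, and fix $\Phi \in \frak{C}_k(\cl S)$ with Kraus operators $A_1, \ldots, A_m$; the target $\|\Phi(T)\| \leq 1$ follows once I construct subchannels $\Phi_n \in \frak{C}_{\sub}(\cl S_n)$ converging to $\Phi$, via $\|\Phi_{n_p}(T_{n_p})\| \leq 1$ (Proposition~\ref{p_insub}) and continuity. To build $\Phi_n$, consider the Gram matrix $G = (A_p^* A_q) \in M_m(\cl S)^+$ with $\sum_p G_{pp} = I$. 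Fix a basis $F_1, \ldots, F_r$ of $\cl S$ and choose $F_i^{(n)} \in \cl S_n$ with $F_i^{(n)} \to F_i$ (possible since $\cl S \subseteq \liminf_n \cl S_n$); expanding each $G_{pq}$ in the $F_i$ and replacing $F_i$ by $F_i^{(n)}$ yields a Hermitian $G^{(n)} \in M_m(\cl S_n)$ with $G^{(n)} \to G$. Adding $\epsilon_n I$ for an $\epsilon_n \to 0$ restores positivity, and dividing by $\mu_n := \|\sum_p (G^{(n)} + \epsilon_n I)_{pp}\| \to 1$ yields $\hat G^{(n)} \in M_m(\cl S_n)^+$ with $\sum_p \hat G^{(n)}_{pp} \leq I$ and $\hat G^{(n)} \to G$. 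The positive square root being continuous, $A_{n,p} := (\hat G^{(n)})^{1/2} \iota_p$ converges to $G^{1/2} \iota_p$, defining Kraus operators of a subchannel $\hat \Phi_n$ with $\cl S_{\hat \Phi_n} \subseteq \cl S_n$, and a fixed full isometric embedding of $H^m$ into an enlarged output space, chosen so that $G^{1/2} \iota_p$ maps to $A_p$, yields $\Phi_n \to \Phi$ as required.

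\textbf{Main obstacle.} The $\limsup$ inclusion is the delicate step: producing $\Phi_n$ requires simultaneous control of positivity (via $\epsilon_n I$), the sub-trace condition (via rescaling by $\mu_n$), and an output-space embedding so that the perturbed Kraus operators converge to the specified $A_p$ rather than to another Kraus representative of the same channel. Continuity of the positive square root and the freedom to enlarge the output space make this tractable, but coordinating all three pieces while keeping the confusability graph of each $\Phi_n$ inside $\cl S_n$ is where the argument concentrates.
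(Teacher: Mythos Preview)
Your proof is correct and follows the same overall architecture as the paper's: reduce to $k=d^2$ via Proposition~\ref{l_stable}, establish both Kuratowski inclusions for $\thet_k$, and finish with Lemma~\ref{l_conth}. The $\limsup$ direction---perturbing the Gram matrix $G=(A_p^*A_q)$ inside $M_m(\cl S_n)$, restoring positivity, rescaling to a subchannel, and realising the Kraus operators via an isometry---is exactly the paper's argument.

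Two points of genuine difference are worth noting. For the $\liminf$ inclusion you argue directly: set $s_n=\max_{\Phi\in\frak{C}_k(\cl S_n)}\|\Phi(T)\|$, extract a convergent subsequence of maximisers, and use Lemma~\ref{l_closed} to force $\limsup s_n\le 1$. The paper instead proves $\limsup_n\frak{P}_k(\cl S_n)\subseteq\frak{P}_k(\cl S)$ and then invokes Lemma~\ref{l_anan} (the anti-blocker inclusion). Your route is more elementary and avoids Lemma~\ref{l_anan} entirely; the paper's route is cleaner once that lemma is in place. For the $\limsup$ inclusion, you allow an arbitrary number $m$ of Kraus operators and absorb this by enlarging the output space beyond $\bb{C}^k$ (legitimate, since Proposition~\ref{p_insub} lets you test $\thet(\cl S_{n_p})$ against subchannels of any output dimension). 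The paper instead restricts to extreme channels $\Phi\in\frak{C}_k(\cl S)\cap\cl E_k$, which by Choi's theorem have $m\le d$, so the isometry $W:\bb{C}^{md}\to\bb{C}^k$ exists without enlarging the target. Your version is slightly more self-contained (no appeal to the structure of extreme channels); the paper's version keeps everything inside $M_k$.

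One small point to tidy: replacing each $F_i$ by $F_i^{(n)}$ in the expansion of $G_{pq}$ need not give a Hermitian block matrix; you should take the Hermitian part $\tfrac12(G^{(n)}+(G^{(n)})^*)$, which remains in $M_m(\cl S_n)$ and still converges to $G$. The paper makes the same adjustment explicitly.
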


\begin{proof}
Set $k = d^2$.
Suppose that $T_n\in \thet_k(\cl S_n)$, $n\in \bb{N}$, and $T_n\to T$ for some $T\in M_d$. 
Let $\Phi\in \frak{C}_k(\cl S)\cap \cl E_k$, and write $\Phi(S) = \sum_{i=1}^m A_i S A_i^*$ in a Kraus representation
with $m\leq d$. 
Let $V = (A_1,\dots,A_m)$ be the corresponding row operator and set 
$B = V^* V = (A_i^* A_j)_{i,j=1}^m$; then $B\in M_m(\cl S)^+$. 
Note that $(V^{\rm t})^* V^{\rm t} = I$.

Since $M_m(\cl S) \subseteq \liminf_{n\to\infty} M_m(\cl S_n)$, there exist $B_n\in M_m(\cl S_n)$, $n\in \bb{N}$,
such that $B_n\to_{n\to\infty} B$. We can moreover assume that $B_n = B_n^*$, $n\in \bb{N}$. 
Since $B\geq 0$, there exists a sequence $(\delta_n)_{n\in \bb{N}}\subseteq \bb{R}^+$ with $\delta_n\to_{n\to\infty} 0$
such that $B_n + \delta_n I \geq 0$, $n\in \bb{N}$. Thus, we may assume that $B_n\geq 0$, $n\in \bb{N}$. 

Since $dm\leq k$, there exists an isometry $W : \bb{C}^{md}\to \bb{C}^k$ 
such that $V = WB^{1/2}$. Let $V_n = WB_n^{1/2}$, $n\in \bb{N}$. 
Then $V_n^* V_n = B_n$, $n\in \bb{N}$, and $V_n\to_{n\to\infty} V$. 
Since $\|V^{\rm t}\| = 1$, we have that $\|V_n^{\rm t}\|\to_{n\to\infty} 1$. 
Letting $\tilde{V}_n = \frac{1}{\|V_n^{\rm t}\|} V_n$, $n\in \bb{N}$, we thus have 
that $\tilde{V}_n\to_{n\to\infty} V$, 
$(\tilde{V}_n^{\rm t})^* \tilde{V}_n^{\rm t} \leq I$
and $\tilde{V}_n^* \tilde{V}_n \in M_m(\cl S_n)$, $n\in \bb{N}$. 

Write $\tilde{V}_n = (A_{n,1},\dots,A_{n,m})$, where $A_{n,i} : \bb{C}^d\to\bb{C}^k$, $i = 1,\dots,m$.
Then the map $\Phi_n : M_d\to M_k$, given by $\Phi_n(S) = \sum_{i=1}^m A_{n,i} S A_{n,i}^*$, $S\in M_d$,
is a subchannel. 
Moreover, $\|\Phi_n - \Phi\|_{\rm cb} \to_{n\to\infty} 0$. 
By Proposition \ref{p_insub}, $\Phi_n(T_n)\leq I$, $n\in \bb{N}$. After passing to a limit, we conclude that 
$\Phi(T)\leq I$, and hence $T\in \thet_k(\cl S)$. We thus showed that 
$\limsup_{n\in \bb{N}} \thet_k(\cl S_n)\subseteq \thet_k(\cl S)$.

Suppose that $\Phi_n\in \frak{C}_k(\cl S_n)$ and $\sigma_n\in M_k^+$, $\Tr(\sigma_n) \leq 1$, 
are such that $\Phi_n^*(\sigma_n) \to_{n\to \infty} A$, for some $A\in M_d^+$. 
Assume, without loss of generality, that 
$\Phi_n \to_{n\to \infty} \Phi$ and $\sigma_n \to_{n\to \infty} \sigma$. 
By Lemma \ref{l_closed}, $\Phi\in \frak{C}_k(\cl S)$.
Moreover, $\Phi_n^*(\sigma_n)\to \Phi^*(\sigma)$. 
We thus showed that $\limsup_{n\in \bb{N}} \frak{P}_k(\cl S_n) \subseteq \frak{P}_k(\cl S)$.
Lemma \ref{l_anan} and identity (\ref{eq_ksh})
imply that $\thet_k(\cl S)\subseteq \liminf_{n\in \bb{N}} \thet_k(\cl S_n)$. 
Proposition \ref{l_stable}  now implies that $\thet(\cl S) = \lim_{n\to\infty} \thet(\cl S_n)$.
It is now straightforward to show that $\theta(\cl S_n)\to_{n\to \infty} \theta(\cl S)$.
\end{proof}

In the next corollary, we denote by $M_d^0$ the real vector space of all 
non-zero hermitian matrices of trace zero.

\begin{corollary}\label{c_Lamb}
The function 
$\Lambda \to \theta(\{\Lambda\}^{\perp})$ from $M_d^0$ into $\bb{R}^+$ is continuous
and has range $[\delta_1,\delta_2]$ for some $1 < \delta_1 < \delta_2\leq d$. 
\end{corollary}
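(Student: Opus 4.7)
The plan is to combine the continuity result of Theorem \ref{th_contthet} with a compactness/connectedness argument on a normalized slice of $M_d^0$, and then read off the endpoint bounds from Theorem \ref{p_theta1} and Proposition \ref{p_1c}.

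First, I would show that $\Lambda \mapsto \{\Lambda\}^{\perp}$ is continuous from $M_d^0$ into the space of operator systems in $M_d$ in the Kuratowski sense. Note that $\{\Lambda\}^{\perp}$ is indeed an operator system: it contains $I$ (since $\Tr(\Lambda)=0$) and is self-adjoint (since $\Lambda^{*}=\Lambda$). If $\Lambda_n \to \Lambda$ with all $\Lambda_n \in M_d^0$, the inclusion $\limsup_n \{\Lambda_n\}^{\perp} \subseteq \{\Lambda\}^{\perp}$ follows by passing to the limit in $\langle X_n, \Lambda_n\rangle = 0$. For the reverse direction $\{\Lambda\}^{\perp} \subseteq \liminf_n \{\Lambda_n\}^{\perp}$, given $X \in \{\Lambda\}^{\perp}$ I would set
\[
X_n := X - \frac{\langle X, \Lambda_n\rangle}{\|\Lambda_n\|_2^{2}}\,\Lambda_n \;\in\; \{\Lambda_n\}^{\perp},
\]
and observe that $X_n \to X$ because $\langle X, \Lambda_n\rangle \to \langle X, \Lambda\rangle = 0$. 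Theorem \ref{th_contthet} then yields $\theta(\{\Lambda_n\}^{\perp}) \to \theta(\{\Lambda\}^{\perp})$.

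Second, since $\{c\Lambda\}^{\perp} = \{\Lambda\}^{\perp}$ for every nonzero scalar $c$, the image of $\Lambda \mapsto \theta(\{\Lambda\}^{\perp})$ on $M_d^0$ coincides with its image on the real Hilbert--Schmidt unit sphere $\cl K := \{\Lambda \in M_d^0 : \|\Lambda\|_2 = 1\}$, which is compact and connected. By continuity, the range is a compact connected subset of $\bb R_+$, hence a closed interval $[\delta_1,\delta_2]$. The upper bound $\delta_2\le d$ follows at once from Theorem \ref{p_theta1}. For $\delta_1>1$, note that since $\Lambda\ne 0$ the operator system $\{\Lambda\}^{\perp}$ is a proper subspace of $M_d$, so Proposition \ref{p_1c} gives $\theta(\{\Lambda\}^{\perp})>1$ pointwise, and compactness of $\cl K$ upgrades the strict inequality to $\delta_1 = \min_{\Lambda\in\cl K}\theta(\{\Lambda\}^{\perp}) > 1$.

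The main obstacle is the strict inequality $\delta_1 < \delta_2$, i.e., non-constancy of the function. Because $\theta$ is invariant under unitary conjugation and scaling, this amounts to exhibiting two distinct unitary orbits on $\cl K$ with different $\theta$-values; this is only possible for $d\ge 3$, since for $d=2$ the group $U(2)$ acts transitively on $\cl K$ (all traceless hermitian $2\times 2$ matrices of fixed Hilbert--Schmidt norm share the same spectrum $\{\pm 1/\sqrt{2}\}$). For $d\ge 3$ I would exhibit two $\Lambda$'s from distinct orbits, for instance $\Lambda_1 = e_1 e_1^{*} - e_2 e_2^{*}$ (spectrum $(1,-1,0,\dots,0)$) against $\Lambda_2 = I_d - d\, e_1 e_1^{*}$ (spectrum $(-(d-1),1,\dots,1)$), and verify via the variational descriptions in Theorem \ref{th_eta} that $\theta(\{\Lambda_1\}^{\perp}) \ne \theta(\{\Lambda_2\}^{\perp})$. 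This explicit separation — distinguishing the $\theta$-values of two codimension-one operator systems — is the most delicate part of the argument.
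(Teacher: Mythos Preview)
Your continuity argument is correct and essentially parallel to the paper's, though a shade more elementary: the paper obtains $\{\Lambda_n\}^{\perp}\to\{\Lambda\}^{\perp}$ by passing to the rank-one projections $\Lambda_n\Lambda_n^*$ on $M_d$ and invoking Halmos \cite{halmos}, while you give the orthogonal-projection formula $X_n = X - \frac{\langle X,\Lambda_n\rangle}{\|\Lambda_n\|_2^2}\,\Lambda_n$ directly. The reduction to the compact connected unit sphere in $M_d^0$ and the appeal to Proposition~\ref{p_1c} for $\delta_1>1$ are exactly the paper's argument.

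You actually go beyond the paper in two respects. First, you make explicit the bound $\delta_2\le d$ via Theorem~\ref{p_theta1}; the paper does not mention it. Second, and more significantly, you single out the strict inequality $\delta_1<\delta_2$ as the real obstacle. The paper's proof does not address this point at all: it stops after establishing that the range is a closed interval with $\delta_1>1$. Your observation that in dimension $d=2$ every nonzero element of $M_2^0$ is, up to scaling, unitarily conjugate to $e_1e_1^*-e_2e_2^*$ --- so that $\Lambda\mapsto\theta(\{\Lambda\}^\perp)$ is constant --- shows that the strict inequality $\delta_1<\delta_2$ \emph{fails} for $d=2$; the corollary as stated requires $d\ge 3$. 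Your proposed route for $d\ge 3$ (distinguishing $\theta(\{\Lambda_1\}^\perp)$ from $\theta(\{\Lambda_2\}^\perp)$ for two spectrally inequivalent choices) is plausible but, as you say, not carried out; the paper offers nothing here either.
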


\begin{proof}
We can clearly assume that the operators $\Lambda$ have norm one. 
Suppose that $\Lambda_n\to_{n\to\infty} \Lambda$, and consider $\Lambda_n\Lambda_n^*$ and $\Lambda\Lambda^*$
as projections on $M_d$.
Then $\Lambda_n\Lambda_n^*$ $\to_{n\to\infty} \Lambda\Lambda^*$; thus,
$(\Lambda_n\Lambda_n^*)^{\perp}$ $\to_{n\to\infty} (\Lambda\Lambda^*)^{\perp}$ and, by
 \cite{halmos}, $\lim_{n\to\infty} \{\Lambda_n\}^{\perp} = \{\Lambda\}^{\perp}$
as subspaces of $M_d$. 
By Theorem \ref{th_contthet}, $\theta(\{\Lambda_n\}^{\perp})$ $\to_{n\to\infty}$ $\theta(\{\Lambda\}^{\perp})$.

Since the domain of the function under consideration is connected and compact, its range is a closed interval 
$[\delta_1,\delta_2]$. 
The fact that $\delta_1 > 1$ follows from Proposition \ref{p_1c}. 
\end{proof}

%%%%%%%%%%%%%%%%%%%%%%%%%%%%%%%%%%%%%%%%%%%%%%%%%%%%%%%%%%
%%%%%%%%%%%%%%%%%%%%%%%.   QUESTIONS     %%%%%%%%%%%%%%%%%%%%%%%%%%
%%%%%%%%%%%%%%%%%%%%%%%%%%%%%%%%%%%%%%%%%%%%%%%%%%%%%%%%%%

\section{Open questions}\label{s_oq}

In this section, we discuss some open questions, arising naturally from the previous results. 

\begin{question}\label{q_tth}
Does the equality $\theta(\cl S) = \hat{\theta}(\cl S)$ hold true for every non-commutative graph $\cl S$?
\end{question}

This is perhaps the most fundamental open question about the parameters we have introduced.
In view of Theorem \ref{th_eta}, such an equality amounts to 
exchanging the order of the infimum and the supremum in its statement. 
We note that standard minimax theorems 
do not apply in any obvious way. 
The question is related to the possibility to lift the duality theory implicit in Lov\'asz original work \cite{lo}
and developed in \cite{gls} (see also \cite{gls_book} and \cite{knuth}), 
leading to several equivalent characterisations in the commutative case. 
In particular, it would be of
interest to study weighted versions of the parameters $\theta$ and $\hat{\theta}$, 
and establish a non-commutative version of the classical result from \cite{lo} 
stating that, for any graph $G$, we have $\thab(G)^{\flat} = \thab(G^c)$. 
Such an approach will be based on examining the following question:

\begin{question}
Does the parameter $\hat{\theta}$ arise from a convex corner?
\end{question}

We were able to establish the continuity of $\theta$ by exhibiting a bound on the 
size of the output system. 
We are not aware if a similar approach is possible for the case of $\hat{\theta}$:

\begin{question}
Given $d\in \bb{N}$, does there exist $k \in \bb{N}$ (depending on $d$), 
such that, for every non-commutative graph $\cl S\subseteq M_d$,  
the parameter $\hat{\theta}(\cl S)$ can be computed using channels $\Phi : M_d\to M_k$?
\end{question}

\begin{question}
Is the map $\cl S \longrightarrow \hat{\theta}(\cl S)$ continuous?
\end{question}

While we established the submultiplicativity of $\hat{\theta}$, leading to a bound on the 
Shannon capacity of a non-commutative graph, we do not know whether similar bounds can be 
formulated in terms of other parameters. In particular, we ask:

\begin{question}
Is the parameter $\nph$ submultiplicative?
\end{question}

In Proposition \ref{Snspaces} we identified most of the introduced parameters in the case of 
the non-commutative graph $\cl S_n$. However, we do not know the value of the Lov\'{a}sz numbers for this 
operator system (even in the case where $n = 2$):

\begin{question}\label{q_sn}
What are the values of $\theta(\cl S_n)$ and $\hat{\theta}(\cl S_n)$? 
\end{question}

Finally, it would be of interest to find a more precise version of Corollary \ref{c_Lamb}:

\begin{question} 
What are the precise values of $\delta_1$ and $\delta_2$ in Corollary \ref{c_Lamb}?
\end{question}


\begin{thebibliography}{99}

%\bibitem{bt}
%{\sc }, 
%{\it  The fractional chromatic number etc}, 
%{\rm file name omegachi}. 



\bibitem{ab}
\textsc{C. D. Aliprantis and K. C. Border},  
\textit{Infinite dimensional analysis. A hitchhiker's guide},
\textrm{Springer-Verlag, 1999.}


\bibitem{btw2}
{\sc G. Boreland, I. G. Todorov and A. Winter}, 
{\it  Non-commutative graph entropy}, 
{\rm in preparation}. 


\bibitem{choi}
{\sc M.-D. Choi}, 
{\it  Completely positive linear maps on complex matrices}, 
{\rm Linear Alg. App. 10 (1975), 285-290}. 


\bibitem{cklms}
{\sc I. Csiszar, J. Korner, L. Lov\'{a}sz, K. Marton and G. Simonyi},
{\it Entropy splitting for antiblocking corners and perfect graphs},
{\rm Combinatorica 10 (1990), no. 1, 27-40}.


\bibitem{dsw}
{\sc R. Duan, S. Severini and A. Winter}, 
{\it Zero-error communication via quantum channels, non-commutative graphs and a quantum Lov\'{a}sz $\theta$ function},
{\rm IEEE Trans. Inf. Theory 59 (2013), no. 2, 1164-1174}.


\bibitem{gls}
{\sc M. Gr\"{o}tschel, L. Lov\'{a}sz and A. Schrijver},
{\it Relaxations of vertex packing}, 
{\rm J. Comb. Theory B 40 (1986), no. 3, 330 -343}.


\bibitem{gls_book}
{\sc M. Gr\"{o}tschel, L. Lov\'{a}sz and A. Schrijver},
{\it Geometric algorithms and combinatorial optimization}, 
{\rm Springer-Verlag, Berlin, 1993}.


\bibitem{halmos}
{\sc P. R. Halmos}, 
{\it Limsups of Lats}, 
{\rm Indiana Univ. Math. J. 29 (1980), no. 2, 293-311}.


\bibitem{h}
{\sc A. S. Holevo},
{\it Quantum systems, channels, information. A mathematical introduction},
{\rm De Gruyter, Berlin, 2012}.


\bibitem{knuth}
{\sc D. E. Knuth}, 
{\it  The Sandwich Theorem}, 
{\rm Electronic Journal of Combinatorics, 1 (1994): http://eudml.org/doc/118559}. 

\bibitem{ksw}
{\sc D. Kretschmann, D. Schlingemann and R. F. Werner},
{\it A continuity theorem for Stinespring's dilation},
{\rm J. Funct. Anal. 255 (2008), 1889-1904}.


\bibitem{km}
{\sc S.-J. Kim and A. Metha}, 
{\it Chromatic numbers and a Lov\'asz type inequality for non-commutative graphs}, 
{\rm preprint, arXiv:1709.05595 (2017)}. 


\bibitem{lpt}
{\sc R. Levene, V. I. Pauslen and I. G. Todorov}, 
{\it Complexity and capacity bounds for quantum channels}, 
{\rm IEEE Trans. Inform. Theory 64 (2018), no. 10, 6917-6928}. 


\bibitem{lo}  
{\sc L. Lov\'asz}, {\it On the Shannon capacity of a graph}, 
{\rm IEEE Trans. Inf. Theory 25 (1979), no. 1, 1-7}.


%\bibitem{marton}
%{\sc K. Marton},
%{\it On the Shannon capacity of probabilistic graphs}, 
%{\rm J. Combin. Theory Ser. B 57 (1993), no. 2, 183-195}.


\bibitem{nc}
{\sc M. A. Nielsen and A. L. Chuang}, 
{\it Quantum computation and quantum information}, 
{\rm Cambridge University Press, Cambridge, 2010}. 



\bibitem{op}
{\sc C. M. Ortiz and V. I. Pauslen}, 
{\it Lov\'{a}sz theta type norms and operator systems}, 
{\rm Linear Algebra Appl. 477 (2015), 128-147}.


\bibitem{p_lect}
{\sc V. I. Pauslen}, 
{\it Entanglement and non-locality}, 
{\rm Lecture Notes, University of Waterloo, 2016}. 


\bibitem{shannon}
{\sc C. E. Shannon},
{\it The zero error capacity of a noisy channel},
{\rm IRE Trans. Inf. Theory 2 (1956), no. 3, 8-19}.



\bibitem{stahlke}
{\sc D. Stahlke},
{\it Quantum zero-error source-channel coding and non-commutative graph theory}, 
{\rm IEEE Trans. Inform. Theory 62 (2016), no. 1, 554-577}.


\bibitem{watrous}
{\sc J. Watrous},
{\it The theory of quantum information},
{\rm Cambridge University Press, Cambridge, 2018}.

\bibitem{weaver}
{\sc N. Weaver}, 
{\it A \lq\lq quantum'' Ramsey theorem for operator systems}, 
{\rm Proc. Amer. Math. Soc. 145 (2017), no. 11, 4595-4605}.


\end{thebibliography}
\end{document}